\newtheorem{theorem}{Theorem}
\newtheorem{proposition}[theorem]{Proposition}
\newtheorem{remark}[theorem]{Remark}
\numberwithin{theorem}{section}
\newtheorem{lemma}[theorem]{Lemma}
\numberwithin{equation}{section}
\numberwithin{equation}{section}
\newcommand{\mask}[1]{}
\definecolor{darkred}{RGB}{139,0,0}
\definecolor{darkgreen}{RGB}{0,100,0}
\definecolor{darkmagenta}{RGB}{139,0,139}
\definecolor{orange}{RGB}{207,83,0}
\definecolor{brown}{RGB}{139,69,19}
\newcommand{\bsDelta}{{\boldsymbol{\Delta}}}
\newcommand{\bsgamma}{{\boldsymbol{\gamma}}}
\newcommand{\bsnu}{{\boldsymbol{\nu}}}
\newcommand{\bsrho}{{\boldsymbol{\rho}}}
\newcommand{\bszero}{{\boldsymbol{0}}}
\newcommand{\bsb}{{\boldsymbol{b}}}
\newcommand{\bse}{{\boldsymbol{e}}}
\newcommand{\bsm}{{\boldsymbol{m}}}
\newcommand{\bsx}{{\boldsymbol{x}}}
\newcommand{\bsy}{{\boldsymbol{y}}}
\newcommand{\rd}{\mathrm{d}}
\newcommand{\bbA}{\mathbb{A}}
\newcommand{\bbR}{\mathbb{R}}
\newcommand{\bbN}{\mathbb{N}}
\newcommand{\bbE}{\mathbb{E}}
\newcommand{\calO}{\mathcal{O}}
\newcommand{\calC}{\mathcal{C}}
\newcommand{\calI}{\mathcal{I}}
\newcommand{\calR}{\mathcal{R}}
\newcommand{\calW}{\mathcal{W}}
\newcommand{\calX}{\mathcal{X}}
\newcommand{\calY}{\mathcal{Y}}
\newcommand{\satop}[2]{\stackrel{\scriptstyle{#1}}{\scriptstyle{#2}}}
\newcommand{\setu}{\mathrm{\mathfrak{u}}}
\newcommand{\supp}{\mathrm{supp}}
\title{Parabolic PDE-constrained optimal control under uncertainty with entropic risk measure using \\quasi-Monte Carlo integration
}
\author{Philipp A. Guth\footnotemark[2] \and
Vesa Kaarnioja\footnotemark[3] \and
Frances Y. Kuo\footnotemark[4] \and 
Claudia Schillings\footnotemark[3] \and 
Ian H. Sloan\footnotemark[4]
}
\begin{document}
\maketitle
\footnotetext[2]{Johann Radon Institute for Computational and Applied Mathematics, \"OAW, Altenbergerstrasse 69, 4040 Linz, Austria. E-mail: \href{mailto:philipp.guth@ricam.oeaw.ac.at}{philipp.guth@ricam.oeaw.ac.at}}
\footnotetext[3]{Fachbereich Mathematik und Informatik, Freie Universit\"at Berlin, Arnimallee 6, 14195 Berlin, Germany. E-mail: \href{mailto:vesa.kaarnioja@fu-berlin.de}{vesa.kaarnioja@fu-berlin.de} $\cdot$ \href{mailto:c.schillings@fu-berlin.de}{c.schillings@fu-berlin.de}}
\footnotetext[4]{School of Mathematics and Statistics, UNSW Sydney, Sydney NSW 2052, Australia. E-mail: \href{mailto:f.kuo@unsw.edu.au}{f.kuo@unsw.edu.au} $\cdot$ \href{mailto:i.sloan@unsw.edu.au}{i.sloan@unsw.edu.au}}
\begin{abstract}
We study the application of a tailored quasi-Monte Carlo (QMC) method to a class
of optimal control problems subject to parabolic partial differential
equation (PDE) constraints under uncertainty: the state in our setting is
the solution of a parabolic PDE with a random thermal diffusion
coefficient, steered by a control function. To account for the presence of
uncertainty in the optimal control problem, the objective function is
composed with a risk measure. We focus on two risk measures, both
involving high-dimensional integrals over the stochastic variables: the
expected value and the (nonlinear) entropic risk measure. The
high-dimensional integrals are computed numerically using specially
designed QMC methods and, under moderate assumptions on the input random field, the error rate
is shown to be essentially linear, independently of the stochastic
dimension of the problem -- and thereby superior to ordinary Monte Carlo
methods. Numerical results demonstrate the effectiveness of our method.
\end{abstract}

\section{Introduction}

Many problems in science and engineering, including optimal control
problems governed by partial differential equations (PDEs), are subject to
uncertainty. If not taken into account, the inherent uncertainty of such
problems has the potential to render worthless any solutions obtained using
state-of-the-art methods for deterministic problems. The careful analysis
of the uncertainty in PDE-constrained optimization is hence indispensable
and a growing field of research (see, e.g.,
\cite{Chen2,Chen3,Chen1,Surowiec3,KouriSurowiec1,KouriSurowiec2,Nobile2,Nobile1,Stadler,Andreas2,Andreas1}).

In this paper we consider the heat equation with an uncertain thermal
diffusion coefficient, modelled by a series in which a countably infinite
number of independent random variables enter affinely. By controlling the
source term of the heat equation, we aim to steer its solution
towards a desired target state. To study the effect of this randomness on the objective function, we consider two risk measures: the expected
value and the entropic risk measure, both involving integrals with respect
to the countably infinite random variables. The integrals are replaced by integrals over finitely many random variables by truncating the series that represents the input random field to a sum over finitely many terms and then approximated using quasi-Monte Carlo (QMC) methods.

QMC approximations are particularly well suited for optimization since
they preserve convexity due to their nonnegative (equal) cubature
weights. Moreover, for sufficiently smooth integrands it is possible to
construct QMC rules with error bounds not depending on the number of
stochastic variables while attaining faster convergence rates compared to
Monte Carlo methods. For these reasons QMC methods have been very
successful in applications to PDEs with random coefficients (see, e.g.,
\cite{vandewalle1,dick,herrmann2,GGKSS2019,gilbert,harbrecht,herrmann3,herrmann1,kuonuyenssurvey,KSSSU2015,KSS2012,vandewalle2,scheichl,Schwab}) and especially
in PDE-constrained optimization under uncertainty, see
\cite{GKKSS2019,Andreas3}. In \cite{KunothSchwab} the authors derive
regularity results for the saddle point operator, which fall within the
same framework as the QMC approximation of affine parametric operator
equation setting considered in \cite{Schwab}.

This paper builds upon our previous work \cite{GKKSS2019}.
The novelty lies in the use and analysis of parabolic PDE constraints in conjunction with the nonlinear entropic risk measure, which inspired the development of an error analysis that is applicable in separable Banach spaces and thus discretization invariant. 
Solely based on regularity assumptions, our novel error analysis covers a very general class of problems. 
Specifically, we extend QMC error bounds in the literature (see, e.g.,
\cite{dick2013high,kuonuyenssurvey}) to separable Banach spaces. A crucial part of our new work is the regularity analysis of the entropic risk measure, which is used to prove our main theoretical results about error estimates and convergence rates for the dimension truncation and the QMC errors. We then apply these new bounds to assess the total errors in the optimal control problem under uncertainty.

The structure of this paper is as follows. The parametric weak formulation
of the PDE problem is given in Section~\ref{sec:problem}. The
corresponding optimization problem is discussed in
Section~\ref{sec:optimal}, with linear risk measures considered in
Subsection~\ref{sec:linear}, the entropic risk measures in
Subsection~\ref{sec:entropic}, and optimality conditions in
Subsection~\ref{sec:opt-cond}. While the regularity of the adjoint PDE
problem is the topic of Section~\ref{sec:reg-adj}, the regularity analysis for
the entropic risk measure is addressed in Section~\ref{sec:reg-ent}.
Section~\ref{sec:analysis} contains the main error analysis 
of this paper. Subsection~\ref{sec:trunc} covers the truncation error and
Subsection~\ref{sec:qmc} analyzes the QMC integration error. Our
approach differs from most studies of QMC in the literature insofar as we develop
the QMC and dimension truncation error theory for the full PDE solutions
(with respect to an appropriately chosen function space norm) instead of
considering the composition of the PDE solution with a linear functional.
In Section~\ref{sec:num} we confirm our theoretical findings with
supporting numerical experiments. Section~\ref{sec:conclusion} is a
concluding summary of this paper.

\section{Problem formulation} \label{sec:problem}

Let $D\subset\mathbb{R}^d$, $d\in\{1,2,3\}$, denote a bounded physical
domain with Lipschitz boundary, let $I := [0,T]$ denote the time interval
with finite time horizon $0<T<\infty$,  and let
$U:=[-\frac{1}{2},\frac{1}{2}]^{\bbN}$ denote a space of parameters. The components of the sequence $\bsy\in U$ are realizations of independently and identically distributed uniform random variables in $[-\tfrac12,\tfrac12]$, and the corresponding probability measure is
$$
\mu({\rm d}\bsy)=\bigotimes_{j\geq 1}{\rm d}y_j={\rm d}\bsy.
$$
Let
\begin{align} \label{eq:axy}
  a^{\bsy}(\bsx,t) := a_0(\bsx,t) + \sum_{j\geq 1} y_j\,\psi_j(\bsx,t),
  \qquad  \bsx\in D,\quad \bsy\in U,\quad t\in I,
\end{align}
be an uncertain (thermal) diffusion coefficient, where we assume (i) for
a.e.~$t\in I$ we have $a_0(\cdot,t) \in L^\infty(D)$,  $\psi_j(\cdot,t)\in
L^\infty(D)$ for all $j\geq 1$, and that $(\sup_{t\in
I}\|\psi_j(\cdot,t)\|_{L^\infty(D)})_{j\geq 1}\in\ell^1$; (ii) $t \mapsto
a^\bsy(\bsx,t)$ is measurable on $I$; (iii) uniform ellipticity: there
exist positive constants $a_{\min}$ and $a_{\max}$ such that
$0<a_{\min}\leq a^\bsy(\bsx,t)\leq a_{\max}<\infty$ for all $\bsx\in D$,
$\bsy\in U$ and a.e.~$t\in I$. Time-varying diffusion coefficients occur
e.g., in finance or cancer tomography. However, the presented setting clearly also includes time-constant diffusion coefficients,  i.e.,
$a^\bsy(\bsx,t) = a^\bsy(\bsx)\, \forall t \in I$.

We consider the heat equation over the time interval $I=[0,T]$,  given by the partial differential equation (PDE)
\begin{align} \label{eq:model}
 \begin{cases}
 \frac{\partial}{\partial t} u^{\bsy}(\bsx,t)-\nabla\cdot \big(a^{\bsy}(\bsx,t)\nabla u^{\bsy}(\bsx,t)\big)
 = z(\bsx,t), &\bsx\in D,\quad\;\, t\in I,\\
 u^{\bsy}(\bsx,t) = 0, &\bsx\in \partial D, \quad t\in I,\\
 u^{\bsy}(\bsx,0) = u_0(\bsx), &\bsx\in D,
 \end{cases}
\end{align}
for all $\bsy\in U$. Here $z(\bsx,t)$ is the control and $u_0 \in L^2(D)$ denotes the initial heat distribution. We denote the input functions collectively by $f := (z, u_0)$. We have imposed homogeneous Dirichlet boundary conditions.

Given a target state $\widehat{u}(\bsx,t)$, we will study the problem of minimizing the following objective function:
\begin{align}\label{eq:objective}
 \widetilde{J}(u,z) := \calR \Big(\frac{\alpha_1}{2}\|u^\bsy - \widehat{u}\|^2_{L^2(V;I)}
 + \frac{\alpha_2}{2} \|u^\bsy(\cdot,T) - \widehat{u}(\cdot,T)\|_{L^2(D)}^2 \Big)\nonumber
 \\+ \frac{\alpha_3}{2}\|z\|_{L^2(V';I)}^2\,,
\end{align}
subject to the PDE \eqref{eq:model} and constraints on the control to be defined later in the manuscript. By $\calR$ we denote a risk measure, which is a functional that maps a set of random variables into the extended real numbers. Specifically, $\calR$ will later be either the expected value or the entropic risk measure, both involving high-dimensional integrals with respect to $\bsy$. We will first introduce a function space setting to describe the problem properly,  including the definition of the $L^2(V;I)$ and $L^2(V';I)$ norms.

\subsection{Function space setting}\label{sec:spaces}

We define $V := H_0^1(D)$ and its (topological) dual space $V' := H^{-1}(D)$, and identify $L^2(D)$ with its own dual. Let $\langle \cdot,\cdot\rangle_{V',V}$ denotes the duality pairing between $V'$ and $V$. The norm and inner product in $V$ are defined as usual by
\[
  \|v\|_V := \|\nabla v\|_{L^2(D)}, \quad
  \langle v_1,v_2\rangle_V := \langle \nabla v_1,\nabla v_2\rangle_{L^2(D)}.
\]
We shall make use of the Riesz operator $R_V\!: V\to V'$ defined by
\begin{align} \label{eq:RV}
  \langle R_V v_1, v_2 \rangle_{V',V} = \langle v_1,v_2\rangle_V
  \quad\forall\,v_1,v_2\in V,
\end{align}
as well as its inverse $R_V^{-1}\!: V'\to V$ satisfying $R_V^{-1} w = v
\Leftrightarrow w = R_V v$ for $v\in V,\,w\in V'$. It follows from
\eqref{eq:RV} that
\begin{align} \label{eq:RVinv}
  \langle w, v \rangle_{V',V} = \langle R_V^{-1}w,v\rangle_V
  \quad\forall\,v\in V,w\in V'.
\end{align}
In turn we define the inner product in $V'$ by
\[
  \langle w_1, w_2\rangle_{V'} := \langle R_V^{-1} w_1, R_V^{-1} w_2\rangle_V.
\]
The norm induced by this inner product is equal to the usual dual norm.

We use analogous notations for inner products and duality pairings between
function spaces on the space-time cylinder $D\times I$. The space
$L^2(V;I)$ consists of all measurable functions $v: I \to V$ with finite
norm
\[
  \|v\|_{L^2(V;I)} := \Big( \int_I \|v(\cdot,t)\|_{V}^2\, \rd t \Big)^{1/2}.
\]
Note that $(L^2(V;I))'=L^2(V';I)$, with the duality pairing given by
\[
  \langle w,v\rangle_{L^2(V';I),L^2(V;I)}
  = \int_I \langle w(\cdot,t),v(\cdot,t)\rangle_{V',V}\,\rd t.
\]
We extend the Riesz operator $R_V$ to $R_V: L^2(V;I) \to L^2(V';I)$ so
that
\begin{align*}
 \langle v_1,v_2\rangle_{L^2(V;I)}
 &= \int_I \langle v_1(\cdot,t),v_2(\cdot,t)\rangle_V\,\rd t
 = \int_I \big\langle R_V v_1(\cdot,t),v_2(\cdot,t)\big\rangle_{V',V}\,\rd t \\
 &= \big\langle R_V v_1,v_2\big\rangle_{L^2(V';I),L^2(V;I)} \quad\forall\, v_1,v_2\in L^2(V;I),
\end{align*}
and we extend the inverse $R_V^{-1}: L^2(V';I) \to L^2(V;I)$ analogously.

We define the space of solutions $u^\bsy$ for $\bsy\in U$ by
\begin{align*}
  \calX := \Big\{v\in L^2(V;I) : \tfrac{\partial}{\partial t}v \in L^2(V';I)\Big\},
\end{align*}
which is the space of all functions $v$ in $L^2(V;I)$ with
(distributional) derivative $\tfrac{\partial}{\partial t}v$ in
$L^2(V';I)$, and which is equipped with the (graph) norm
\begin{align*}
 \|v\|_{\calX} :=&\,
 \Big(\int_I \Big(\|v(\cdot,t)\|_{V}^2
 + \|\tfrac{\partial}{\partial t}v(\cdot,t)\|_{V'}^2 \Big)\, \rd t \Big)^{1/2} \\
 =&\, \Big(\|v\|_{L^2(V;I)}^2 + \|\tfrac{\partial}{\partial t}v\|_{L^2(V';I)}^2 \Big)^{1/2}.
\end{align*}
Finally, because there are two inputs in equation \eqref{eq:model}, namely
$z \in L^2(V';I)$ and $u_0 \in L^2(D)$, it is convenient to define the
product space $\calY := L^2(V;I) \times L^2(D)$, and its dual space
by $\calY' := L^2(V';I) \times L^2(D)$, with the norms
\begin{align*}
 \|v\|_{\calY} &:= \Big(\int_I\|v_1(\cdot,t)\|_{V}^2\,\rd t + \|v_2\|_{L^2(D)}^2\Big)^{1/2}\!\!,\\
 \|w\|_{\calY'} &:= \Big(\int_I\|w_1(\cdot,t)\|_{V'}^2\,\rd t + \|w_2\|_{L^2(D)}^2\Big)^{1/2}\!\!.
\end{align*}

In particular, we extend $\calX$ to $\calY$ as follows. For all $v \in
\calX$ we interpret $v$ as an element of $\calY$ as $v =
(v(\bsx,t),v(\bsx,0))$. This gives $\calX \subseteq \calY$. We further
know from \cite[Theorem~5.9.3]{Evans2010} that $\calX \hookrightarrow
\calC(L^2(D);I)$ and $\max_{t \in I} \|v(\cdot,t)\|_{L^2(D)} \leq
C_1(\|v\|_{L^2(V;I)} + \|\tfrac{\partial}{\partial t} v\|_{L^2(V';I)})
\leq \sqrt{2}\, C_1 \|v\|_\calX$ for $v \in \calX$, where $C_1$ depends on
$T$ only. Hence we obtain for all $v \in \calX$ that
\begin{align*}
    &\|v\|_\calY^2
    = \|v\|_{L^2(V;I) \times L^2(D)}^2
    = \|v\|_{L^2(V;I)}^2 + \|v(\cdot,0)\|_{L^2(D)}^2\\
    &\le \|v\|_{L^2(V;I)}^2 + \Big(\max_{t \in I}\|v(\cdot,t)\|_{L^2(D)}\Big)^2
    \le \|v\|_\calX^2 + 2\,C_1^2\|v\|_\calX^2
    = (1 + 2\,C_1^2)\|v\|^2_\calX,
\end{align*}
and thus we get that $\calX$ is continuously embedded into $\calY$, i.e.,
$\calX \hookrightarrow \calY$.

\subsection{Variational formulation}\label{sec:variational}

Based on these spaces, using integration by parts with respect to $\bsx$
we can write \eqref{eq:model} as a variational problem as follows. Given
the input functions $f =(z,u_0)\in \calY'$ and $\bsy \in U$, find a
function $u^{\bsy} \in \calX$ such that
\begin{align}\label{eq:weakPDE}
    b(\bsy;u^{\bsy},v) \,=\, \langle f,v \rangle_{\calY',\calY} \quad \forall\; v= (v_1,v_2) \in \calY\,,
\end{align}
where for all $w\in \calX$, $v = (v_1,v_2) \in \calY$ and $\bsy \in U$,
\begin{align} \label{eq:bilinear}
 &b(\bsy;w,v) := \langle B^\bsy w, v\rangle_{\calY',\calY} \nonumber\\
 &:= \underbrace{\int_I \big\langle \tfrac{\partial}{\partial t} w,v_1\big\rangle_{V',V}\, \rd t
 + \int_I \int_{D} \big(a^{\bsy} \nabla w \cdot \nabla v_1\big)\,\rd \bsx\,\rd t
 }_{=:\,\langle B_1^\bsy w,v_1 \rangle_{L^2(V';I),L^2(V;I)}}
 + \underbrace{\int_D w(\cdot,0)\,v_2\, \rd\bsx}_{=:\, \langle B_2^\bsy w, v_2 \rangle_{L^2(D)}}\,, %
\end{align}
\[
 \langle f,v \rangle_{\calY',\calY}
 := \int_I \langle z, v_1\rangle_{V',V}\, \rd t + \int_D u_0\, v_2\,\rd\bsx\,, \nonumber 
\]
with operators $B^\bsy: \calX \to \calY'$, $B_1^\bsy: \calX \to
L^2(V';I)$, $B_2^\bsy:\calX \to L^2(D)$, and $B^\bsy w = (B_1^\bsy w,
B_2^\bsy w)$. For better readability we have omitted the parameter
dependence $v = (v_1(\bsx,t),v_2(\bsx))$, $f = (z(\bsx,t), u_0(\bsx))$, $w
= w(\bsx,t)$ and $a^{\bsy} = a^{\bsy}(\bsx,t)$. Note that a solution of
\eqref{eq:weakPDE} automatically satisfies $u^\bsy(\cdot,0) = u_0$, as can
be seen by setting $v_1 = 0$ and allowing arbitrary $v_2$.

The parametric family of parabolic evolution operators $\{B^\bsy,\, \bsy
\in U\}$ associated with this bilinear form is a family of isomorphisms
from $\calX$ to $\calY'$, see, e.g., \cite{DautrayLions2000}. In
\cite{SchwabStevenson} a shorter proof based on the characterization of
the bounded invertibility of linear operators between Hilbert spaces is
presented, together with precise bounds on the norms of the operator and
its inverse: there exist constants $0 < \beta_1 \leq \beta_2 < \infty$
such that
\begin{align} \label{eq:beta}
    \sup_{\bsy \in U} \|(B^\bsy)^{-1}\|_{\calY' \to \calX} \le \frac{1}{\beta_1}
    \quad
    \text{and}
    \quad
    \sup_{\bsy \in U} \|B^\bsy\|_{\calX \to \calY'} \le \beta_2\,,
\end{align}
where $\beta_1 \geq \tfrac{\min\{a_{\min}
a_{\max}^{-2},a_{\min}\}}{\sqrt{2\max\{a_{\min}^{-2},1\}+\varrho^2}}$ and
$\beta_2 \leq \sqrt{2\max\{1,a_{\max}^2\}+\varrho^2}$ with $\varrho :=
\sup\limits_{w \in \calX} \tfrac{\|w(\cdot,0)\|_{L^2(D)}}{\|w\|_{\calX}}$,
and hence for all $\bsy \in U$ we have the \emph{a priori} estimate
\begin{align} \label{eq:u-apriori}
    \|u^{\bsy}\|_{\calX}
    \le \frac{\|f\|_{\calY'}}{\beta_1} =
    \frac{1}{\beta_1} \|(z,u_0)\|_{\calY'}
    = \frac{1}{\beta_1} \big( \|z\|^2_{L^2(V';I)} + \|u_0\|^2_{L^2(D)} \big)^{1/2}\,.
\end{align}

With $\bbN_0 := \{0,1,2,\ldots\}$, let $\bsnu\in\mathbb{N}_0^\infty$
denote a multi-index, and define ${\supp}(\bsnu):=\{j\ge 1 :
\nu_j\neq 0\}$ and $|\bsnu| := \sum_{j\ge 1} \nu_j$. In the sequel, we
shall consider the set $\mathscr{F}:=\{\bsnu\in\mathbb N_0^\infty:|{\rm
supp}(\bsnu)|<\infty\}$ of multi-indices with finite support. We use the
notation $\partial^\bsnu_\bsy := \prod_{j\ge 1} (\partial/\partial
y_j)^{\nu_j}$ to denote the mixed partial derivatives with respect to
$\bsy$. For any sequence of real numbers $\bsb = (b_j)_{j\geq1}$, we
define $\bsb^\bsnu := \prod_{j\ge 1} b_j^{\nu_j}$.

The following regularity result for the state $u^\bsy$ was proved in
\cite{KunothSchwab}.

\begin{lemma}\label{lem:statebound}
Let $f=(z,u_0)\in \calY'$. For all $\bsnu\in\mathscr{F}$ and all $\bsy\in
U$, we have
\begin{align}\label{eq:statebound}
 \|\partial^{\bsnu}_{\bsy}u^{\bsy}\|_{\calX}
 \le \frac{\|f\|_{\calY'}}{\beta_1} \,
 |\bsnu|!\,\bsb^{\bsnu},
\end{align}
where $\beta_1$ is as described in \eqref{eq:beta} and the sequence $\bsb
= (b_j)_{j\ge 1}$ is defined by
\begin{align} \label{eq:bj}
 b_j := \frac{1}{\beta_1}\,\sup_{t\in I}\|\psi_j(\cdot,t)\|_{L^{\infty}(D)}.
\end{align}
\end{lemma}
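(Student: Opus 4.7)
The plan is to proceed by induction on the order $|\bsnu|$ using the standard recursion-and-Leibniz technique for affine parametric PDEs. The base case $|\bsnu|=0$ is precisely the \emph{a priori} estimate \eqref{eq:u-apriori}, so the real work is in the inductive step.

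For the inductive step, I would first differentiate the weak formulation \eqref{eq:weakPDE} with respect to $\bsy$. Since the right-hand side $\langle f,v\rangle_{\calY',\calY}$ is independent of $\bsy$, and the only $\bsy$-dependent ingredient of $b(\bsy;\cdot,\cdot)$ is the diffusion coefficient $a^\bsy$, which is \emph{affine} in $\bsy$, Leibniz's rule collapses dramatically: for any multi-index $\bsm$, $\partial^{\bsm}_\bsy a^\bsy$ vanishes unless $\bsm=\bszero$ (giving $a^\bsy$) or $\bsm=\bse_j$ (giving $\psi_j$). The time-derivative term and the initial-value term are linear in $u^\bsy$ and independent of $\bsy$, and $u_0$ itself carries no $\bsy$-dependence, so after applying $\partial^\bsnu_\bsy$ with $\bsnu\neq\bszero$ one obtains the recursive identity
\begin{equation*}
 b(\bsy;\partial^\bsnu_\bsy u^\bsy,v)
 \,=\, -\sum_{j\,:\,\nu_j\ge 1}\nu_j \int_I\!\!\int_D \psi_j\,\nabla(\partial^{\bsnu-\bse_j}_\bsy u^\bsy)\cdot\nabla v_1\,\rd\bsx\,\rd t
\end{equation*}
for all $v=(v_1,v_2)\in\calY$, where $\bse_j$ is the multi-index with a $1$ in position $j$. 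This identity says that $\partial^\bsnu_\bsy u^\bsy$ solves the parabolic problem driven by a right-hand side in $\calY'$ of the form $(g_1,0)$.

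Next I would estimate $\|g_1\|_{L^2(V';I)}$ by duality: for arbitrary $v_1\in L^2(V;I)$, applying Cauchy--Schwarz in $\bsx$ and in $t$ together with the pointwise bound $|\psi_j(\bsx,t)|\le \sup_{t\in I}\|\psi_j(\cdot,t)\|_{L^\infty(D)}$ yields
\begin{equation*}
 \|g_1\|_{L^2(V';I)}
 \,\le\, \sum_{j\,:\,\nu_j\ge 1}\nu_j \sup_{t\in I}\|\psi_j(\cdot,t)\|_{L^\infty(D)}\,\|\partial^{\bsnu-\bse_j}_\bsy u^\bsy\|_{L^2(V;I)}.
\end{equation*}
Since $\|\cdot\|_{L^2(V;I)}\le \|\cdot\|_\calX$ and the second component of the right-hand side is zero, $\|(g_1,0)\|_{\calY'}=\|g_1\|_{L^2(V';I)}$. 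Applying $\|(B^\bsy)^{-1}\|_{\calY'\to\calX}\le 1/\beta_1$ from \eqref{eq:beta} and substituting the definition \eqref{eq:bj} of $b_j$ produces the clean recursion
\begin{equation*}
 \|\partial^\bsnu_\bsy u^\bsy\|_\calX
 \,\le\, \sum_{j\,:\,\nu_j\ge 1}\nu_j\,b_j\,\|\partial^{\bsnu-\bse_j}_\bsy u^\bsy\|_\calX.
\end{equation*}

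Finally, I would close the induction. Assuming \eqref{eq:statebound} for all multi-indices of order strictly less than $|\bsnu|$, inserting it into the recursion gives
\begin{equation*}
 \|\partial^\bsnu_\bsy u^\bsy\|_\calX
 \,\le\, \frac{\|f\|_{\calY'}}{\beta_1}(|\bsnu|-1)!\sum_{j\,:\,\nu_j\ge 1}\nu_j\,b_j\,\bsb^{\bsnu-\bse_j}
 \,=\, \frac{\|f\|_{\calY'}}{\beta_1}(|\bsnu|-1)!\,\bsb^\bsnu\sum_{j\,:\,\nu_j\ge 1}\nu_j,
\end{equation*}
and $\sum_{j}\nu_j=|\bsnu|$ then produces the factorial $|\bsnu|!$. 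The most delicate step is the very first one: writing out the $\bsy$-differentiated equation and verifying that the initial-condition term drops out for $\bsnu\neq\bszero$ (because $u^\bsy(\cdot,0)=u_0$ is $\bsy$-independent), so that the recursion really is driven only by the gradient coupling through the $\psi_j$; everything afterwards is bookkeeping.
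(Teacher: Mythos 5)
Your argument is correct, and it is the standard bootstrapping proof for affine-parametric problems: differentiate the variational identity, use that only the $\bsm=\bszero$ and $\bsm=\bse_j$ Leibniz terms survive because $a^\bsy$ is affine in $\bsy$ while the time-derivative and initial-condition terms and the data $f$ are $\bsy$-independent, bound the resulting right-hand side $(g_1,0)$ in $\calY'$, apply $\|(B^\bsy)^{-1}\|_{\calY'\to\calX}\le 1/\beta_1$ from \eqref{eq:beta}, and close the recursion by induction using $\sum_j \nu_j=|\bsnu|$. Note, however, that the paper itself does not prove Lemma~\ref{lem:statebound}; it quotes the bound from \cite{KunothSchwab}, where essentially this argument is carried out, so there is no in-paper proof to compare against line by line. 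Your route does parallel the paper's own proof of the analogous adjoint bound (Theorem~\ref{thm:adjregularity}), the difference being that there the right-hand side depends on $\bsy$, so the recursion cannot be closed by your simple induction and the paper instead invokes \cite[Lemma~9.1]{kuonuyenssurvey}; for the state equation, with $\bsy$-independent data, your direct induction is the cleaner choice and yields exactly $|\bsnu|!\,\bsb^\bsnu$. The only point you pass over silently is the justification that $\bsy\mapsto u^\bsy$ is in fact infinitely differentiable in $\calX$ so that the term-by-term differentiation of \eqref{eq:weakPDE} is legitimate; this is standard for the affine uniformly elliptic setting (difference-quotient or holomorphy arguments, as in \cite{KunothSchwab,cohendevore}) but deserves a sentence, since the estimate as you derive it is a priori conditional on the existence of $\partial^\bsnu_\bsy u^\bsy$.
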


For our later derivation of the optimality conditions for the optimal
control problem, it is helpful to write the variational form of the PDE
\eqref{eq:weakPDE} as an operator equation using \eqref{eq:bilinear}:
\begin{align} \label{eq:opPDE}
 B^\bsy u^\bsy \,=\, (B_1^\bsy u^\bsy, B_2^\bsy u^\bsy) \,=\, (z, u_0) \qquad \text{in }\calY'\,,
\end{align}
with $B_1^\bsy: \calX \to L^2(V';I)$ and $B_2^\bsy: \calX \to L^2(D)$
given by
\[ 
 B_1^\bsy = \Lambda_1 B^\bsy \quad \text{and} \quad B_2^\bsy = \Lambda_2 B^\bsy\,,
\] 
where $\Lambda_1: \calY' \to L^2(V';I)$ and $\Lambda_2: \calY' \to L^2(D)$
are the restriction operators defined, for any $v = (v_1,v_2) \in \calY'$,
by
\begin{align*}
 \Lambda_1 (v_1,v_2) := v_1 \quad \text{ and } \quad
 \Lambda_2 (v_1,v_2) := v_2\,.
\end{align*}

For the definition of a meaningful inverse of the operators $B_1^\bsy$ and
$B_2^\bsy$, we first define the trivial extension operators
$\Xi_1:L^2(V';I) \to \calY'$ and $\Xi_2: L^2(D) \to \calY'$, for any $v_1
\in L^2(V';I)$ and $v_2\in L^2(D)$, by
\begin{align*}
 \Xi_1 v_1 := (v_1,0) \quad \text{ and } \quad
 \Xi_2 v_2 := (0,v_2)\,.
\end{align*}
We observe that $P_1 := \Xi_1\Lambda_1$ is an orthogonal projection on the
$L^2(V';I)$-component in $\calY'$ and analogously $P_2 := \Xi_2\Lambda_2$
is an orthogonal projection on the $L^2(D)$-component in~$\calY'$. This is
verified as follows. For all $v,u\in \calY'$ it is true that
\begin{align*}
 \langle (\calI_{\calY'}-P_1) v, P_1 u \rangle_{\calY'} = 0 \quad \text{and} \quad
 \langle (\calI_{\calY'}-P_2) v, P_2 u \rangle_{\calY'} = 0\,,
\end{align*}
where $\calI_{\calY'}$ denotes the identity operator on $\calY'$. We
clearly have $\calI_{\calY'} = P_1 + P_2$. Therefore we can write any
element $v$ in $\calY'$ as $v = P_1 v + P_2 v$ in $\calY'$, and by
linearity of $(B^\bsy)^{-1}$ we get
$$
 (B^\bsy)^{-1} v = (B^\bsy)^{-1} (P_1 v + P_2 v)
 = (B^\bsy)^{-1} P_1 v + (B^\bsy)^{-1} P_2 v\,.
$$

A meaningful inverse of the operators $B^\bsy_1: \calX \to L^2(V';I)$ and
$B^\bsy_2: \calX \to L^2(D)$ are then given by $(B_1^\bsy)^\dagger:
L^2(V';I) \to \calX$ and $(B_2^\bsy)^\dagger: L^2(D) \to \calX$, defined
as
\begin{align} \label{eq:inverse}
 (B_1^\bsy)^\dagger := (B^\bsy)^{-1} \Xi_1 \quad \text{and} \quad
 (B_2^\bsy)^\dagger := (B^\bsy)^{-1} \Xi_2\,.
\end{align}
We call the operator $(B_1^\bsy)^\dagger$ the pseudoinverse of $B_1^\bsy$
and the operator $(B_2^\bsy)^\dagger$ the pseudoinverse of $B_2^\bsy$.
Clearly, the pseudoinverse operators are linear and bounded operators.

\begin{lemma}\label{lemma:lemmaMPI}
The pseudoinverse operators $(B_1^\bsy)^\dagger$ and $(B_2^\bsy)^\dagger$
defined by \eqref{eq:inverse} satisfy
\begin{align} \label{eq:identity1}
 \calI_{L^2(V';I)} &= B_1^\bsy (B_1^\bsy)^\dagger\,, \quad
 \calI_{L^2(D)} = B_2^\bsy (B_2^\bsy)^\dagger\,, \quad \text{and} \nonumber \\
 \calI_\calX &= (B_1^\bsy)^\dagger B_1^\bsy + (B_2^\bsy)^\dagger B_2^\bsy,
\end{align}
which are the identity operators on $L^2(V';I)$, $L^2(D)$, and $\calX$,
respectively.
\end{lemma}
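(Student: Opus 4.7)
The plan is to prove each of the three identities in \eqref{eq:identity1} by direct substitution of the definitions of $B_1^\bsy$, $B_2^\bsy$ and their pseudoinverses, exploiting (i) the fact that $\Lambda_i \Xi_i$ acts trivially on the respective component space and (ii) the established decomposition $\calI_{\calY'}=P_1+P_2$ together with the invertibility of $B^\bsy:\calX\to\calY'$ guaranteed by \eqref{eq:beta}.

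For the first identity, I would write
\[
 B_1^\bsy (B_1^\bsy)^\dagger
 = \Lambda_1 B^\bsy (B^\bsy)^{-1}\Xi_1
 = \Lambda_1\Xi_1,
\]
and then observe, directly from the definitions of $\Lambda_1$ and $\Xi_1$, that $\Lambda_1\Xi_1 v_1=\Lambda_1(v_1,0)=v_1$ for every $v_1\in L^2(V';I)$, hence $\Lambda_1\Xi_1=\calI_{L^2(V';I)}$. The second identity is obtained in exactly the same way with the index $1$ replaced by $2$, using $\Lambda_2\Xi_2 v_2=\Lambda_2(0,v_2)=v_2$ for every $v_2\in L^2(D)$.

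For the third identity, I would compute, for any $w\in\calX$,
\[
 \bigl((B_1^\bsy)^\dagger B_1^\bsy + (B_2^\bsy)^\dagger B_2^\bsy\bigr) w
 = (B^\bsy)^{-1}\Xi_1\Lambda_1 B^\bsy w + (B^\bsy)^{-1}\Xi_2\Lambda_2 B^\bsy w
 = (B^\bsy)^{-1}(P_1+P_2)B^\bsy w,
\]
and then invoke the identity $\calI_{\calY'}=P_1+P_2$ established in the text just above Lemma~\ref{lemma:lemmaMPI}, together with the fact that $B^\bsy:\calX\to\calY'$ is a boundedly invertible isomorphism (ensured by \eqref{eq:beta}), to conclude that the right-hand side equals $(B^\bsy)^{-1}B^\bsy w=w$.

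There is no real obstacle here: once the pseudoinverses are written out using \eqref{eq:inverse} and $B_i^\bsy=\Lambda_i B^\bsy$, the result follows from the elementary properties of the restriction/extension pairs $(\Lambda_i,\Xi_i)$ and from the invertibility of $B^\bsy$. The only mild subtlety is to make clear why one may freely compose these operators, i.e.\ to check that the domains and codomains match (which they do by construction: $\Xi_i$ lands in $\calY'$, $(B^\bsy)^{-1}$ maps $\calY'$ to $\calX$, $B^\bsy$ maps $\calX$ to $\calY'$, and $\Lambda_i$ maps $\calY'$ to the correct component space), and boundedness of all factors, which has already been recorded in the discussion preceding the lemma.
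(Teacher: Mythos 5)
Your proposal is correct and follows essentially the same argument as the paper: composing the definitions $(B_i^\bsy)^\dagger=(B^\bsy)^{-1}\Xi_i$ and $B_i^\bsy=\Lambda_i B^\bsy$, using $\Lambda_i\Xi_i=\calI$ for the first two identities and $P_1+P_2=\calI_{\calY'}$ together with the invertibility of $B^\bsy$ for the third. No gaps; the domain/codomain bookkeeping you mention is indeed the only point needing care, and you handle it as the paper does.
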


\begin{proof}
From the definition of various operators, we have
\begin{align*}
 B_1^\bsy (B_1^\bsy)^\dagger
 &= \Lambda_1 B^\bsy (B^\bsy)^{-1} \Xi_1
 = \Lambda_1 \calI_{\calY'} \Xi_1 = \Lambda_1 \Xi_1 = \calI_{L^2(V';I)}\,,
 \\
 B_2^\bsy (B_2^\bsy)^\dagger
 &= \Lambda_2 B^\bsy (B^\bsy)^{-1} \Xi_2
 = \Lambda_2 \calI_{\calY'} \Xi_2 = \Lambda_2 \Xi_2 = \calI_{L^2(D)}\,,
 \\
 (B_1^\bsy)^\dagger B_1^\bsy + (B_2^\bsy)^{\dagger} B_2^\bsy
 &= (B^\bsy)^{-1} \Xi_1 \Lambda_1 B^\bsy +  (B^\bsy)^{-1} \Xi_2 \Lambda_2 B^\bsy \\
 &= (B^\bsy)^{-1} (P_1 + P_2) B^\bsy
 = (B^\bsy)^{-1} \calI_{\calY'} B^\bsy
 = \calI_{\calX}\,,
\end{align*}
as required.  
\end{proof}

\begin{lemma}
For $\bsy\in U$ and given $(z,u_0)\in \calY'$, the solution $u^\bsy$ of
the operator equation \eqref{eq:opPDE} can be written as
\begin{align}\label{eq:splitinvB}
 u^\bsy \,=\, (B^\bsy)^{-1}(z,u_0) \,=\,
 (B_1^\bsy)^{\dagger}z + (B_2^\bsy)^{\dagger} u_0 \quad \text{in } \calX\,.
\end{align}
\end{lemma}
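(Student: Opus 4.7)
The lemma asserts a linear-decomposition identity, and its proof amounts to straightforward bookkeeping with the operators introduced in \eqref{eq:inverse} together with the splitting $\calI_{\calY'} = P_1 + P_2$ verified just before Lemma~\ref{lemma:lemmaMPI}. The plan is to split the input $(z,u_0)$ into its two components in $\calY'$ via the trivial extensions $\Xi_1$ and $\Xi_2$, apply the isomorphism $(B^\bsy)^{-1}$, and recognize the resulting pieces as the pseudoinverses.

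Concretely, first I would note that, directly from the definitions of $\Xi_1$ and $\Xi_2$,
\[
(z, u_0) \,=\, (z, 0) + (0, u_0) \,=\, \Xi_1 z + \Xi_2 u_0 \quad \text{in } \calY'.
\]
Then, since $B^\bsy: \calX \to \calY'$ is an isomorphism by \eqref{eq:beta} with linear inverse, applying $(B^\bsy)^{-1}$ to both sides of the operator equation $B^\bsy u^\bsy = (z, u_0)$ from \eqref{eq:opPDE} yields
\[
u^\bsy \,=\, (B^\bsy)^{-1}(z, u_0) \,=\, (B^\bsy)^{-1}\Xi_1 z + (B^\bsy)^{-1}\Xi_2 u_0.
\]
By the definition \eqref{eq:inverse} of the pseudoinverses, $(B^\bsy)^{-1}\Xi_1 = (B_1^\bsy)^\dagger$ and $(B^\bsy)^{-1}\Xi_2 = (B_2^\bsy)^\dagger$, which is exactly the claimed identity in $\calX$.

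There is no genuine obstacle here: the work was already done when setting up the decomposition of the identity on $\calY'$ and defining the pseudoinverses as $(B^\bsy)^{-1}\Xi_i$. A slightly different route that leads to the same conclusion is to apply the third identity $\calI_\calX = (B_1^\bsy)^\dagger B_1^\bsy + (B_2^\bsy)^\dagger B_2^\bsy$ from Lemma~\ref{lemma:lemmaMPI} directly to $u^\bsy$, and then substitute $B_1^\bsy u^\bsy = z$ and $B_2^\bsy u^\bsy = u_0$ coming from \eqref{eq:opPDE}.
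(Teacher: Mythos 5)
Your proposal is correct. Your primary argument bypasses Lemma~\ref{lemma:lemmaMPI} entirely: you write $(z,u_0)=\Xi_1 z+\Xi_2 u_0$ in $\calY'$, apply the linear inverse $(B^\bsy)^{-1}$, and identify $(B^\bsy)^{-1}\Xi_1=(B_1^\bsy)^\dagger$ and $(B^\bsy)^{-1}\Xi_2=(B_2^\bsy)^\dagger$ straight from the definition \eqref{eq:inverse}. The paper instead argues on the $\calX$ side, applying the identity $\calI_\calX=(B_1^\bsy)^\dagger B_1^\bsy+(B_2^\bsy)^\dagger B_2^\bsy$ from \eqref{eq:identity1} to $u^\bsy$ and substituting $B_1^\bsy u^\bsy=z$, $B_2^\bsy u^\bsy=u_0$ --- which is precisely the alternative route you sketch in your last sentence. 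The two arguments are equivalent one-liners (the identity in Lemma~\ref{lemma:lemmaMPI} is itself proved from $P_1+P_2=\calI_{\calY'}$, i.e.\ from your decomposition of the input), so neither buys anything substantial over the other; yours has the minor virtue of not invoking the intermediate lemma, the paper's has the virtue of reusing machinery it needs anyway for the dual problem.
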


\begin{proof}
From \eqref{eq:identity1} we have $u^\bsy
 = (B_1^\bsy)^{\dagger} B_1^\bsy u^\bsy + (B_2^\bsy)^{\dagger} B_2^\bsy u^\bsy
 = (B_1^\bsy)^{\dagger}z + (B_2^\bsy)^{\dagger} u_0$,
as required.  
\end{proof}

\subsection{Dual problem}

In the following we will need the dual operators $(B^\bsy)'$,
$(B^\bsy_1)'$ and $(B^\bsy_2)'$ of $B^\bsy$, $B^\bsy_1$ and $B^\bsy_2$,
respectively, which are formally defined by
\begin{align*}
    \langle w, (B^\bsy)'v\rangle_{\calX,\calX'} &\,:=\, \langle B^\bsy w, v\rangle_{\calY',\calY}\\
    \langle w, (B_1^\bsy)'v_1\rangle_{\calX,\calX'} &\,:=\, \langle B_1^\bsy w, v_1\rangle_{L^2(V';I),L^2(V;I)}\\
    \langle w, (B_2^\bsy)'v_2\rangle_{\calX,\calX'} &\,:=\, \langle B_2^\bsy w, v_2\rangle_{L^2(D)}
\end{align*}
for all $w\in \calX$, $v = (v_1,v_2) \in \calY$ and $\bsy \in U$, with
$(B^\bsy)'v = (B_1^\bsy)'v_1 + (B_2^\bsy)'v_2$.

The dual problem to \eqref{eq:weakPDE} (or equivalently \eqref{eq:opPDE})
is as follows. Given the input function $f_{\rm dual} \in \calX'$ and
$\bsy \in U$, find a function $q^\bsy = (q_1^\bsy,q_2^\bsy)\in \calY$ such
that
\begin{align}\label{eq:weakdual}
    \langle w, (B^\bsy)'q^\bsy \rangle_{\calX,\calX'} = \langle w, f_{\rm dual} \rangle_{\calX,\calX'}\quad \forall
    w\in  \calX,
\end{align}
or in operator form
$
(B^\bsy)'q^\bsy =f_{\rm dual},
$
which has the unique solution
$
q^\bsy= \big((B^\bsy)'\big)^{-1} f_{\rm dual}\,.
$

Existence and uniqueness of the solution of the dual problem follow
directly from the bounded invertibility of $B^\bsy$. We know that its
inverse, $(B^\bsy)^{-1}$, is a bounded linear operator and thus the dual
of $(B^\bsy)^{-1}$ is (uniquely) defined (see, e.g., \cite[Theorem~1 and
Definition~1, Chapter~VII]{Yosida}). The operator $(B^\bsy)^{-1}$ and its
dual operator $((B^\bsy)^{-1})' =((B^\bsy)')^{-1}$ are equal in their
operator norms (see, e.g., \cite[Theorem~2, Chapter~VII]{Yosida}), i.e.,
the operator norms of the dual operator $(B^\bsy)'$ and its inverse are
bounded by the constants $\beta_2$ and $\tfrac{1}{\beta_1}$
in~\eqref{eq:beta}.

Applying integration by parts with respect to the time variable in
\eqref{eq:bilinear}, the left-hand side of the dual problem
\eqref{eq:weakdual} can be written as
\begin{align}\label{eq:vardual}
 &\langle w, (B^\bsy)'q^\bsy\rangle_{\calX,\calX'}
 = \langle B^\bsy w, q^\bsy\rangle_{\calY',\calY}  \nonumber \\
 &= \bigg(\int_I \langle w,-\tfrac{\partial}{\partial t}q_1^\bsy\rangle_{V,V'}\, \rd t
 + \int_I \int_D (a^\bsy \nabla w \cdot \nabla q_1^\bsy)\,\rd \bsx\,\rd t \nonumber \\
 & \quad
 + \int_D w(\cdot,T)\,q_1^\bsy(\cdot,T)\,\rd \bsx - \int_D w(\cdot,0)\,q_1^\bsy(\cdot,0)\,\rd \bsx \bigg)
 + \int_D w(\cdot,0)\, q_2^\bsy\,\rd \bsx \\
 &= \big\langle w,(B_1^\bsy)'q_1^\bsy \big\rangle_{\calX,\calX'} + \big\langle w,(B_2^\bsy)'q_2^\bsy \big\rangle_{\calX,\calX'}\,.
 \nonumber
\end{align}

We may express the solution $q^\bsy = (q_1^\bsy,q_2^\bsy)\in\calY$ of the
dual problem \eqref{eq:weakdual} in terms of the dual operators of the
pseudoinverse operators $(B_1^\bsy)^\dagger$ and $(B_2^\bsy)^\dagger$.
This is true because we get an analogous result to
Lemma~\ref{lemma:lemmaMPI} in the dual spaces.

\begin{lemma} \label{lemma:lemmaMPIdual}
The dual operators $((B_1^\bsy)^\dagger)'$ and $((B_2^\bsy)^\dagger)'$ of
the pseudoinverse operators defined in \eqref{eq:inverse} satisfy
\begin{align} \label{eq:identity2}
 \calI_{L^2(V;I)} &= ((B_1^\bsy)^\dagger)'(B_1^\bsy)'\,, \quad
 \calI_{L^2(D)} = ((B_2^\bsy)^\dagger)'(B_2^\bsy)' \,, \quad \text{and} \nonumber\\
 \calI_{\calX'} &= (B_1^\bsy)'((B_1^\bsy)^\dagger)' + (B_2^\bsy)' ((B_2^\bsy)^\dagger)'\,,
\end{align}
which are the identity operators on $L^2(V;I)$, $L^2(D)$ and $\calX'$,
respectively.
\end{lemma}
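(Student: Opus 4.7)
The plan is to deduce each of the three identities in~\eqref{eq:identity2} by taking Banach-space duals of the three identities already established in Lemma~\ref{lemma:lemmaMPI}. The only ingredients required are the standard functional-analytic facts that (i) for bounded linear operators $A: X_1 \to X_2$ and $C: X_2 \to X_3$ between Banach spaces one has $(CA)' = A'C'$; (ii) duality is linear, so $(A+C)' = A' + C'$; and (iii) the dual of the identity $\calI_X$ equals the identity $\calI_{X'}$ on the dual space. All three are standard (see, e.g., \cite{Yosida}, already invoked in the excerpt).

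Concretely, first I would dualize the first identity $\calI_{L^2(V';I)} = B_1^\bsy (B_1^\bsy)^\dagger$ from Lemma~\ref{lemma:lemmaMPI}. Since $(L^2(V';I))' = L^2(V;I)$, the left-hand side becomes $\calI_{L^2(V;I)}$, while the right-hand side becomes $((B_1^\bsy)^\dagger)'(B_1^\bsy)'$ by the composition rule. Second, an entirely analogous computation applied to $\calI_{L^2(D)} = B_2^\bsy (B_2^\bsy)^\dagger$ gives the second identity; here one uses the self-identification $L^2(D) \cong (L^2(D))'$ so that the dual of $\calI_{L^2(D)}$ is again $\calI_{L^2(D)}$. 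Third, dualizing $\calI_\calX = (B_1^\bsy)^\dagger B_1^\bsy + (B_2^\bsy)^\dagger B_2^\bsy$ and combining additivity with the composition rule yields $\calI_{\calX'} = (B_1^\bsy)'((B_1^\bsy)^\dagger)' + (B_2^\bsy)'((B_2^\bsy)^\dagger)'$, which is the third identity.

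I do not anticipate a substantive obstacle: the argument is essentially a mechanical transposition once the three standard duality rules are accepted. The only care needed is bookkeeping of domains and codomains --- for instance, verifying that $(B_1^\bsy)': L^2(V;I) \to \calX'$ and $((B_1^\bsy)^\dagger)': \calX' \to L^2(V;I)$, so that the composition $(B_1^\bsy)'((B_1^\bsy)^\dagger)'$ indeed maps $\calX'$ to itself and can be meaningfully added to $(B_2^\bsy)'((B_2^\bsy)^\dagger)'$ in the last identity. This typing is immediate from the definitions of the pseudoinverses in \eqref{eq:inverse} and of the dual operators introduced just before \eqref{eq:weakdual}, but is the only place where a misprint could arise.
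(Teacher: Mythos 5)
Your proposal is correct and follows essentially the same route as the paper: both derive \eqref{eq:identity2} by dualizing the three identities of Lemma~\ref{lemma:lemmaMPI}. The only cosmetic difference is that the paper verifies the transposition rules by writing out the duality pairings $\langle v_1,w_1\rangle_{L^2(V';I),L^2(V;I)}$, $\langle v_2,w_2\rangle_{L^2(D)}$ and $\langle v,w\rangle_{\calX,\calX'}$ explicitly, whereas you invoke the standard abstract facts $(CA)'=A'C'$, $(A+C)'=A'+C'$ and $(\calI_X)'=\calI_{X'}$, together with the identifications $(L^2(V';I))'=L^2(V;I)$ and $(L^2(D))'=L^2(D)$ -- which amounts to the same computation.
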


\begin{proof}
For all $v_1 \in L^2(V';I)$, $w_1 \in L^2(V;I)$, $v_2,w_2\in L^2(D)$, it
follows from \eqref{eq:identity1} that
\begin{align*}
 \langle v_1,w_1 \rangle_{L^2(V';I),L^2(V;I)}
 &= \big\langle B_1^\bsy (B_1^\bsy)^\dagger v_1, w_1 \big\rangle_{L^2(V';I),L^2(V;I)}\\
 &= \big\langle v_1,((B_1^\bsy)^\dagger)' (B_1^\bsy)' w_1 \big\rangle_{L^2(V';I),L^2(V;I)}\,, \mbox{ and} \\
 \langle v_2,w_2 \rangle_{L^2(D)}
 &= \big\langle B_2^\bsy (B_2^\bsy)^\dagger v_2,w_2 \big\rangle_{L^2(D)}
 \!=\! \big\langle v_2, ((B_2^\bsy)^\dagger)' (B_2^\bsy)' w_2 \big\rangle_{L^2(D)}.
\end{align*}
Similarly, for all $v\in\calX$ and $w\in\calX'$ we have
\begin{align*}
 \langle v,w \rangle_{\calX,\calX'}
 &= \big\langle \big((B_1^\bsy)^\dagger B_1^\bsy + (B_2^\bsy)^\dagger B_2^\bsy\big) v, w \big\rangle_{\calX,\calX'}\\
 &= \big\langle (B_1^\bsy)^{\dagger} B_1^\bsy v, w \big\rangle_{\calX,\calX'}
 + \big\langle (B_2^\bsy)^{\dagger} B_2^\bsy v, w \big\rangle_{\calX,\calX'}  \\
 &= \big\langle v, (B_1^\bsy)'((B_1^\bsy)^\dagger)' w \big\rangle_{\calX,\calX'}
 + \big\langle v, (B_2^\bsy)'((B_2^\bsy)^\dagger)' w \big\rangle_{\calX,\calX'} \\
 &= \langle v, \big((B_1^\bsy)'((B_1^\bsy)^\dagger)'
 + (B_2^\bsy)'((B_2^\bsy)^\dagger)'\big) w \rangle_{\calX,\calX'}\,.
\end{align*}
This completes the proof.  
\end{proof}

\begin{lemma}\label{lemma:dual_sol} Given the input function $f_{\rm dual}
\in \calX'$ and $\bsy \in U$, the (unique) solution of the dual problem
\eqref{eq:weakdual} is given by
\begin{equation}\label{eq:dualexplicit}
  q^\bsy = (q^\bsy_1, q^\bsy_2)
  = \big( ((B_1^\bsy)^\dagger)' f_{\rm dual}, ((B_2^\bsy)^\dagger)' f_{\rm dual}\big)
  \quad\mbox{in $\calY$}\,.
\end{equation}
\end{lemma}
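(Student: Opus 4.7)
My plan is to treat this as a direct verification: plug the proposed $q^\bsy$ into the dual equation and check that it satisfies $(B^\bsy)'q^\bsy = f_{\rm dual}$, then invoke the bounded invertibility of $(B^\bsy)'$ for uniqueness. The heavy lifting has already been done in Lemma~\ref{lemma:lemmaMPIdual}, so this should be a short argument.

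First I would check well-posedness of the candidate: since $(B_1^\bsy)^\dagger\!: L^2(V';I)\to\calX$ is bounded and linear, its dual $((B_1^\bsy)^\dagger)'\!: \calX'\to L^2(V;I)$ is well-defined and bounded, and similarly $((B_2^\bsy)^\dagger)'\!: \calX' \to L^2(D)$. Hence the proposed pair $q^\bsy=(q_1^\bsy,q_2^\bsy)$ does indeed lie in $L^2(V;I)\times L^2(D) = \calY$, as required.

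Next I would verify that the proposed $q^\bsy$ solves \eqref{eq:weakdual}. Using the decomposition $(B^\bsy)'q^\bsy = (B_1^\bsy)'q_1^\bsy + (B_2^\bsy)'q_2^\bsy$ recorded just after the definition of the dual operators, substituting the candidate gives
\begin{align*}
(B^\bsy)'q^\bsy
&= (B_1^\bsy)' \big(((B_1^\bsy)^\dagger)' f_{\rm dual}\big) + (B_2^\bsy)'\big(((B_2^\bsy)^\dagger)' f_{\rm dual}\big)\\
&= \Big((B_1^\bsy)'((B_1^\bsy)^\dagger)' + (B_2^\bsy)'((B_2^\bsy)^\dagger)'\Big) f_{\rm dual}
= \calI_{\calX'} f_{\rm dual} = f_{\rm dual},
\end{align*}
where the penultimate equality is exactly the third identity of \eqref{eq:identity2} in Lemma~\ref{lemma:lemmaMPIdual}. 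Pairing with any $w\in\calX$ then recovers \eqref{eq:weakdual}.

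Finally, uniqueness is immediate from the bounded invertibility of $B^\bsy: \calX\to\calY'$ recorded in \eqref{eq:beta}: the dual operator $(B^\bsy)'\!:\calY\to\calX'$ inherits bounded invertibility (with the same bounds $\beta_1,\beta_2$ on its inverse and operator norm, as already noted immediately after \eqref{eq:weakdual}), so any solution of $(B^\bsy)'q^\bsy=f_{\rm dual}$ in $\calY$ must equal the candidate above. I do not anticipate a serious obstacle here; the only thing to be careful about is bookkeeping the direction of the dualities — in particular that $((B_i^\bsy)^\dagger)'$ maps into the correct component space of $\calY$, which is what makes the sum in \eqref{eq:dualexplicit} well-posed as an element of $\calY$ rather than requiring further identification.
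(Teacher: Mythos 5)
Your proposal is correct and follows essentially the same route as the paper: apply the third identity of \eqref{eq:identity2} from Lemma~\ref{lemma:lemmaMPIdual} to verify that the candidate satisfies $(B^\bsy)'q^\bsy=f_{\rm dual}$, with existence and uniqueness supplied by the bounded invertibility of $(B^\bsy)'$. The extra remark that $((B_1^\bsy)^\dagger)'$ and $((B_2^\bsy)^\dagger)'$ map into the correct component spaces of $\calY$ is a harmless addition not spelled out in the paper.
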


\begin{proof}
Existence and uniqueness follow from the bounded invertibility of
$(B^\bsy)^\prime$, see Subsection~\ref{sec:variational}. Thus, we only
need to verify that \eqref{eq:dualexplicit} solves the dual problem
\eqref{eq:weakdual}. It follows from \eqref{eq:identity2} that
\begin{align*}
 f_{\rm dual}
 &= \big((B_1^\bsy)'((B_1^\bsy)^\dagger)' + (B_2^\bsy)' ((B_2^\bsy)^\dagger)'\big) f_{\rm dual} \\
 &= (B_1^\bsy)'((B_1^\bsy)^\dagger)' f_{\rm dual} + (B_2^\bsy)' ((B_2^\bsy)^\dagger)' f_{\rm dual}\\
 &= (B_1^\bsy)' q_1^\bsy + (B_2^\bsy)' q_2^\bsy
 = (B^\bsy)' q^\bsy\,,
\end{align*}
as required.  
\end{proof}

We will see in the next section that, with the correct choice of the
right-hand side $f_{\rm dual}$, the gradient of the objective
function \eqref{eq:objective} can be computed using the solution $q^\bsy$
of the dual problem.

\section{Parabolic optimal control problems under uncertainty with control
constraints} \label{sec:optimal}

The presence of uncertainty in the optimization problem requires the
introduction of a risk measure $\mathcal{R}$ that maps the random variable
objective function (see \eqref{eq:Phi} below) to the extended real
numbers. Let $(\Omega, \mathcal{A}, \mathbb{P})$ be a complete
probability space. A functional $\calR : L^p(\Omega, \mathcal{A},
\mathbb{P}) \to \mathbb{R} \cup \{\infty\}$, for $p \in [1,\infty)$, is
said to be a \emph{coherent} risk measure \cite{Artzner} if for
$X,\tilde{X} \in L^p(\Omega, \mathcal{A}, \mathbb{P})$ we have
\begin{enumerate}
\item [(1)\!] Convexity: $\mathcal{R}(\lambda X + (1-\lambda)
    \tilde{X}) \leq \lambda \mathcal{R}(X) + (1-\lambda)
    \mathcal{R}(\tilde{X})$ for all $\lambda \in [0,1]$.
\item [(2)\!] Translation equivariance: $\mathcal{R}(X+c) =
    \mathcal{R}(X) + c$ for all $c \in \mathbb{R}$.
\item [(3)\!] Monotonicity: If $X \leq \tilde{X}$ $\mathbb{P}$-a.e.
    then $\mathcal{R}(X) \leq \mathcal{R}(\tilde{X})$.
\item [(4)\!] Positive homogeneity: $\mathcal{R}(tX) = t
    \mathcal{R}(X)$ for all $t\geq 0$.
\end{enumerate}
Coherent risk measures are popular as numerous desirable properties can be
derived from the above conditions (see, e.g., \cite{KouriSurowiec1} and
the references therein).
However, it can be shown (see \cite[Theorem 1]{KouriSurowiec2}) that the
only coherent risk measures that are Fr\'echet differentiable are linear
ones. The expected value has all of these properties, but is risk-neutral.
In order to address also risk-averse problems we focus on the (nonlinear)
entropic risk measures, which are risk-averse, Fr\'echet differentiable,
and satisfy the conditions (1)--(3) above, i.e., they are not positively
homogeneous (and thus not coherent). Risk measures satisfying (2) and (3)
are called monetary risk measures, and a monetary risk measure that also
satisfies (1) is called a convex risk measure (see \cite{FoellmerSchied}).

In this section we will first discuss the required conditions on the
risk measure $\mathcal{R}$ under which the optimal control problem
has a unique solution. We will then present two classes of risk measures
that satisfy these conditions, namely the linear risk measures that
include the expected value, and the entropic risk measures. Finally we
derive necessary and sufficient optimality conditions for the optimal
control problem with these two risk measures.
We assume that the target state $\widehat{u}$ belongs to $\calX$
and that the constants $\alpha_{1}, \alpha_{2}$ are nonnegative with
$\alpha_1+\alpha_2
>0$ and $\alpha_{3}>0$.
Then we consider the following problem: minimize $\widetilde{J}(u,z)$
defined in \eqref{eq:objective}
subject to the parabolic PDE \eqref{eq:model} and constraints on the
control
\begin{align}
 z \in \mathcal Z\label{eq:boxconstraints}
\end{align}
with $\mathcal Z$ being nonempty, bounded, closed and convex.

We want to analyze the problem in its reduced form, i.e., expressing the
state $u^\bsy = (B^\bsy)^{-1} (z,u_0)$ in \eqref{eq:objective} in
terms of the control $z$. This reformulation is possible because of the
bounded invertibility of the operator $B^\bsy$ for every $\bsy \in U$, see
Section~\ref{sec:variational} and the references therein. We therefore
introduce an alternative notation $u(z) = (u^\bsy(z))(\bsx,t) =
u^\bsy(\bsx,t)$. (Of course $u^\bsy$ depends also on $u_0$, but we can
think of $u_0$ as fixed, and therefore uninteresting.) The reduced problem
is then to minimize
\begin{align}  \label{eq:J-reduced}
 J(z) := \widetilde{J}\big(u(z),z\big)
 =
 \calR \Big(
 \frac{\alpha_1}{2}\big\|u^\bsy(z)-\widehat{u}\big\|_{L^2(V;I)}^2
 &+ \frac{\alpha_2}{2}\big\|E_T\big(u^\bsy(z)-\widehat{u}\big)\big\|_{L^2(D)}^2
 \Big) \nonumber \\
 &+\frac{\alpha_3}{2}\|z\|_{L^2(V';I)}^2 ,
\end{align}
where $E_T\!:\calX\to L^2(D)$ is the bounded linear operator defined by
$v\mapsto v(\cdot,T)$ for some fixed terminal time $T>0$.

Defining
\begin{align}\label{eq:Phi}
 \Phi^\bsy(z) := \frac{\alpha_1}{2} \big\|(B^\bsy)^{-1}(z,u_0)-\widehat{u}\big\|^2_{L^2(V;I)}
 + \frac{\alpha_2}{2} \big\|E_T\big((B^\bsy)^{-1}(z,u_0) - \widehat{u}\big)\|^2_{L^2(D)},
\end{align}
we can equivalently write the reduced problem as
\begin{align}\label{eq:reduced_OCuU}
\min_{z \in \mathcal{Z}} \Big( \calR(\Phi^\bsy(z)) + \frac{\alpha_3}{2}\|z\|_{L^2(V';I)}^2 \Big)\,.
\end{align}

With the uniformly boundedly invertible forward operator $B^\bsy$,
our setting fits into the abstract framework of \cite{KouriSurowiec1}
where the authors derive existence and optimality conditions for
PDE-constrained optimization under uncertainty. In particular, the forward
operator $B^\bsy$, the regularization term
$\tfrac{\alpha_3}{2}\|z\|_{L^2(V';I)}^2$ and the random variable
tracking-type objective function $\Phi^\bsy$ satisfy the assumptions of
\cite[Proposition 3.12]{KouriSurowiec1}. In order to present the result
about the existence and uniqueness of the solution of
\eqref{eq:reduced_OCuU}, which is based on \cite[Proposition
3.12]{KouriSurowiec1}, we recall some definitions from convex analysis
(see, e.g., \cite{KouriSurowiec1} and the references therein):
A functional $\mathcal{R}: L^p(\Omega,\mathcal{A},\mathbb{P}) \to
\mathbb{R} \cup \{\infty\}$ is called \emph{proper} if
$\mathcal{R}(X)
> -\infty$ for all $X \in L^p(\Omega,\mathcal{A},\mathbb{P})$ and ${{\rm
dom}}(\mathcal{R}) := \{X \in L^p(\Omega,\mathcal{A},\mathbb{P}):
\mathcal{R}(X) < \infty\} \neq \emptyset$; it is called \emph{lower
semicontinuous} or \emph{closed} if its epigraph ${\rm epi}(\mathcal{R})
:= \{(X,\alpha) \in L^p(\Omega,\mathcal{A},\mathbb{P}) \times \mathbb R :
\mathcal{R}(X) \leq \alpha\}$ is closed in the product topology
$L^p(\Omega,\mathcal{A},\mathbb{P}) \times \mathbb R$.

\begin{lemma}\label{lem:exist_unique}
Let $\alpha_1,\alpha_2 \geq 0$ and $\alpha_3>0$ with $\alpha_1+\alpha_2 >0$ and let $\mathcal R$ be proper, closed, convex and monotonic, then there exists a unique solution of \eqref{eq:reduced_OCuU}.
\end{lemma}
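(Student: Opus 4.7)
The plan is to invoke Proposition~3.12 of \cite{KouriSurowiec1}, whose hypotheses are tailor-made for this kind of problem; the work is really in matching notation. First I would verify four ingredients: (i) by \eqref{eq:splitinvB} the control-to-state map $z\mapsto u^\bsy(z)=(B_1^\bsy)^\dagger z+(B_2^\bsy)^\dagger u_0$ is affine in $z$ and, by \eqref{eq:beta} together with Lemma~\ref{lemma:lemmaMPI}, uniformly bounded in $\bsy\in U$ as an operator from $L^2(V';I)$ into $\calX$; (ii) for each $\bsy\in U$ the integrand $\Phi^\bsy$ in \eqref{eq:Phi} is continuous and convex on $L^2(V';I)$, being a sum of nonnegative multiples of squared norms composed with bounded affine maps; (iii) $\calZ$ is nonempty, bounded, closed and convex by assumption; and (iv) $\calR$ is proper, closed, convex and monotonic by assumption.

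For existence I would argue by the direct method. Any minimizing sequence $(z_n)\subset\calZ$ is bounded in the Hilbert space $L^2(V';I)$ and thus admits a weakly convergent subsequence with limit $z^\ast$; the set $\calZ$, being convex and closed, is weakly sequentially closed, so $z^\ast\in\calZ$. The quadratic regularizer is weakly lower semicontinuous, and the risk term $z\mapsto\calR(\Phi^\bsy(z))$ is weakly lower semicontinuous because it is convex (see below) and strongly lower semicontinuous, the latter following from the strong continuity of $z\mapsto\Phi^\bsy(z)$ together with the closedness of $\calR$. Hence $z^\ast$ attains the infimum.

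Uniqueness rests on the observation that $z\mapsto\calR(\Phi^\bsy(z))$ is convex. For $\lambda\in[0,1]$ and any $z_1,z_2\in L^2(V';I)$, pointwise (in $\bsy$) convexity of $\Phi^\bsy$ in $z$ gives $\Phi^\bsy(\lambda z_1+(1-\lambda)z_2)\le\lambda\,\Phi^\bsy(z_1)+(1-\lambda)\,\Phi^\bsy(z_2)$, and applying first monotonicity and then convexity of $\calR$ yields
\[
  \calR\bigl(\Phi^\bsy(\lambda z_1+(1-\lambda)z_2)\bigr)\le\lambda\,\calR(\Phi^\bsy(z_1))+(1-\lambda)\,\calR(\Phi^\bsy(z_2)).
\]
Because $\alpha_3>0$, the regularizer $\tfrac{\alpha_3}{2}\|\cdot\|_{L^2(V';I)}^2$ is strictly convex, so the total objective is strictly convex on the convex set $\calZ$ and the minimizer is unique.

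The main obstacle is the clean handling of the nonlinear composition $\calR\circ\Phi^\bsy$: one must check that $\bsy\mapsto\Phi^\bsy(z)$ lies in the domain of $\calR$ (in particular, is measurable as a function of $\bsy$), and one must invoke both monotonicity and convexity of $\calR$, in the order shown above, in order to push pointwise convexity through the risk measure -- positive homogeneity is \emph{not} available and must not be used. These are precisely the structural points encoded in the hypotheses of \cite[Proposition~3.12]{KouriSurowiec1}, so the tidiest write-up is a verification-and-quote rather than a re-derivation.
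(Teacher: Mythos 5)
Your proposal is correct and follows essentially the same route as the paper: existence is delegated to \cite[Proposition~3.12]{KouriSurowiec1}, and uniqueness comes from convexity of $z\mapsto\calR(\Phi^\bsy(z))$ (pointwise convexity of the quadratic map $\Phi^\bsy$ pushed through $\calR$ using monotonicity and then convexity, exactly as in the paper) combined with the $\alpha_3$-term, which the paper phrases as strong convexity while you use strict convexity --- an immaterial difference for uniqueness. Your additional direct-method sketch for existence is a harmless (if slightly informal, e.g.\ the weak lower semicontinuity of the composed risk term) re-derivation of what the cited proposition already provides.
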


\begin{proof}
The existence of the solution follows directly from \cite[Proposition
3.12]{KouriSurowiec1}. We thus only prove the strong convexity of the
objective function, which implies strict convexity and hence uniqueness of
the solution. Clearly $\frac{\alpha_3}{2}\|z\|^2_{L^2(V';I)}$ is strongly
convex. Since the sum of a convex and a strongly convex function is
strongly convex it remains to show the convexity of $\mathcal
R(\Phi^\bsy(z))$. By the linearity and the bounded invertibility of the
linear forward operator $B^\bsy$, the tracking-type objective functional
$\Phi^\bsy(z)$ is quadratic in $z$ and hence convex, i.e., we have for
$z,\tilde{z} \in L^2(V';I)$ and $\lambda \in [0,1]$ that
$\Phi^\bsy(\lambda z + (1-\lambda) \tilde{z}) \leq \lambda\Phi^\bsy(z) +
(1-\lambda)\Phi^\bsy(\tilde{z})$. Then, by the monotonicity and the
convexity of the risk measure $\mathcal R$ we get $ \mathcal
R(\Phi^\bsy(\lambda z + (1-\lambda) \tilde{z})) \leq \mathcal
R(\lambda\Phi^\bsy(z) + (1-\lambda)\Phi^\bsy(\tilde{z})) \leq \lambda
\mathcal R( \Phi^\bsy(z)) + (1-\lambda)\mathcal R(\Phi^\bsy(\tilde{z}))\,,
$ as required.  
\end{proof}

\subsection{Linear risk measures, including the expected value}
\label{sec:linear}

First we derive a formula for the Fr\'{e}chet derivative of
\eqref{eq:J-reduced} when $\calR$ is linear, which includes the
special case $\calR(\cdot) = \int_U (\cdot)\,\rd \bsy$.

\begin{lemma}\label{lemma:gradient}
Let $\calR$ be linear. Then the Fr\'{e}chet derivative
of~\eqref{eq:J-reduced} as an element of $(L^2(V';I))'=L^2(V;I)$ is given
by
\begin{align}
 J'(z)
 &\,=\, \calR\Big(
 \big((B_1^\bsy)^{\dagger}\big)' \big(\alpha_1 R_V + \alpha_2 E_T'E_T\big) \big(u^\bsy(z)-\widehat{u}\big)
 \Big)
 +\alpha_3 R_V^{-1} z \label{eq:Frechet}
\end{align}
for $z\in L^2(V';I)$.
\end{lemma}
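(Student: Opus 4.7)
The plan is to apply the chain rule, using \eqref{eq:splitinvB} to view $u^\bsy(z) = (B_1^\bsy)^\dagger z + (B_2^\bsy)^\dagger u_0$ as an affine map $L^2(V';I) \to \calX$ with constant bounded linear part $(B_1^\bsy)^\dagger$. Since each summand of $\widetilde J$ is a squared norm of a bounded linear image of $(z,u^\bsy(z))$, the reduced objective is quadratic in $z$ for every fixed $\bsy$, so the Fréchet derivative of each term exists everywhere; all the work is to identify its Riesz representative in $(L^2(V';I))' = L^2(V;I)$.

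For a test direction $h\in L^2(V';I)$, polarization of the two squared norms in $\Phi^\bsy(z+h)$ gives the first-order contribution
\begin{align*}
\alpha_1 \langle u^\bsy(z) - \widehat u,\, (B_1^\bsy)^\dagger h\rangle_{L^2(V;I)} + \alpha_2 \langle E_T(u^\bsy(z) - \widehat u),\, E_T(B_1^\bsy)^\dagger h\rangle_{L^2(D)}\,,
\end{align*}
plus a quadratic-in-$h$ remainder. I then apply the extended Riesz identity $\langle v_1,v_2\rangle_{L^2(V;I)} = \langle R_V v_1, v_2\rangle_{L^2(V';I),L^2(V;I)}$ to the first term, use self-duality of $L^2(D)$ together with $E_T' : L^2(D)\to\calX'$ for the second, and invoke the definition of the dual operator $((B_1^\bsy)^\dagger)' : \calX' \to L^2(V;I)$. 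The continuous embedding $L^2(V';I) \hookrightarrow \calX'$ (dual to $\calX \hookrightarrow L^2(V;I)$) lets me interpret $R_V(u^\bsy(z)-\widehat u)$ as an element of $\calX'$, after which the first-order contribution collapses to
\begin{align*}
\big\langle h,\, ((B_1^\bsy)^\dagger)' (\alpha_1 R_V + \alpha_2 E_T' E_T)(u^\bsy(z)-\widehat u)\big\rangle_{L^2(V';I), L^2(V;I)}\,.
\end{align*}
For the regularization, the first-order variation $\alpha_3 \langle z,h\rangle_{L^2(V';I)}$ is recast via \eqref{eq:RVinv} as $\alpha_3 \langle h, R_V^{-1} z\rangle_{L^2(V';I),L^2(V;I)}$, giving the representative $\alpha_3 R_V^{-1} z \in L^2(V;I)$.

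It remains to pull $\calR$ past the differentiation. By \eqref{eq:beta} together with \eqref{eq:inverse}, $\|(B_1^\bsy)^\dagger\|_{L^2(V';I)\to\calX} \leq 1/\beta_1$ uniformly in $\bsy\in U$, hence the quadratic-in-$h$ remainder is bounded by $C\|h\|_{L^2(V';I)}^2$ with $C$ independent of $\bsy$. Linearity and continuity of $\calR$ (acting on the appropriate Banach-space-valued $L^p$-space, with the Bochner integral in the expected-value case) then allow exchanging $\calR$ with the first-order variation and reduce the image of the remainder to $o(\|h\|_{L^2(V';I)})$. Assembling with the regularization term yields \eqref{eq:Frechet}. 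The main delicate point is precisely this interchange of $\calR$ with Fréchet differentiation for a Banach-space-valued integrand; it is exactly the uniform-in-$\bsy$ remainder bound just described that makes it rigorous.
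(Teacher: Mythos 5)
Your proposal follows essentially the same route as the paper's proof: use \eqref{eq:splitinvB} to write $u^\bsy(z+h)-u^\bsy(z)=(B_1^\bsy)^{\dagger}h$, expand the squared norms to isolate the first-order terms, and identify the Riesz representative in $L^2(V;I)$ via the extended Riesz operator, the dual operators $E_T'$ and $\big((B_1^\bsy)^{\dagger}\big)'$, and $R_V^{-1}$ for the regularization term, finally pulling the linear $\calR$ through the expansion. Your explicit uniform-in-$\bsy$ bound on the quadratic remainder (from $\|(B_1^\bsy)^\dagger\|\le 1/\beta_1$), used to justify interchanging $\calR$ with the first-order expansion, is a detail the paper leaves implicit but is fully consistent with its argument.
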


\begin{proof}
For $z,\delta\in L^2(V';I)$, we can write
\begin{align*}
 &J(z+\delta)
 = \calR\Big(\frac{\alpha_1}{2} \big\|u^\bsy(z+\delta)-u^\bsy(z)+u^\bsy(z)-\widehat{u}\big\|_{L^2(V;I)}^2 \\
 &\qquad +\frac{\alpha_2}{2} \big\| E_T\big( u^\bsy(z+\delta)-u^\bsy(z)+u^\bsy(z)-\widehat{u}\big)\big\|_{L^2(D)}^2
 \Big)
  +\frac{\alpha_3}{2} \|z+\delta\|_{L^2(V';I)}^2 \\
 &= \calR\Big(\frac{\alpha_1}{2}  \big\| (B_1^\bsy)^{\dagger}\delta + \big(u^\bsy(z)-\widehat{u}\big)\big\|_{L^2(V;I)}^2 \\
 &\qquad +\frac{\alpha_2}{2}  \big\| E_T(B_1^\bsy)^{\dagger}\delta + E_T \big(u^\bsy(z)-\widehat{u}\big)\big\|_{L^2(D)}^2
 \Big)
 +\frac{\alpha_3}{2} \|z+\delta\|_{L^2(V';I)}^2,
\end{align*}
where we used \eqref{eq:splitinvB} to write $u^\bsy(z+\delta)-u^\bsy(z) =
[(B_1^\bsy)^{\dagger}(z+\delta) + (B_2^\bsy)^{\dagger}u_0] -
[(B_1^\bsy)^{\dagger}(z) + (B_2^\bsy)^{\dagger}u_0] =
(B_1^\bsy)^{\dagger}\delta$. Expanding the squared norms using $\|v+w\|^2
= \langle v+w,v+w\rangle = \|v\|^2 + 2\langle v,w\rangle + \|w\|^2$, we
obtain
\begin{align*}
 J(z+\delta)
 \,=\, J(z)+ (\partial_z J(z))\,\delta + \mathrm{o}(\delta),
\end{align*}
with the Fr\'{e}chet derivative $\partial_z J(z) : L^2(V';I)\to \mathbb{R}$
defined by
 \begin{align*}
 (\partial_z J(z))\,\delta
 &:= \calR\Big(\alpha_1
 \overbrace{\big\langle  (B_1^\bsy)^{\dagger}\delta , u^\bsy(z)-\widehat{u} \big\rangle_{L^2(V;I)}}^{=:\,{\rm Term}_1} \\
 &\quad\qquad +\alpha_2
 \underbrace{\big\langle E_T(B_1^\bsy)^{\dagger}\delta , E_T\big(u^\bsy(z)-\widehat{u}\big) \big\rangle_{L^2(D)}}_{=:\,{\rm Term}_2}
 \Big)
 +\alpha_3 \underbrace{\langle z,\delta\rangle_{L^2(V';I)}}_{=:\,{\rm Term}_3}.
\end{align*}

It remains to simplify the three terms.
Using the extended Riesz operator $R_V\!: L^2(V;I) \to L^2(V';I)$, we have
\begin{align*}
 {\rm Term}_1
 &= \big\langle u^\bsy(z)-\widehat{u} , (B_1^\bsy)^{\dagger}\delta \big\rangle_{L^2(V;I)} \\
 &= \big\langle R_V\big(u^\bsy(z)-\widehat{u}\big),(B_1^\bsy)^{\dagger}\delta \big\rangle_{L^2(V';I),L^2(V;I)} \\
 &= \big\langle R_V\big(u^\bsy(z)-\widehat{u}\big),(B_1^\bsy)^{\dagger}\delta \big\rangle_{\calX',\calX} \\
 &= \big\langle \big((B_1^\bsy)^{\dagger}\big)'R_V\big(u^\bsy(z)-\widehat{u}\big),\delta \big\rangle_{L^2(V;I),L^2(V';I)},
\end{align*}
where the third equality follows since $(B_1^\bsy)^{\dagger}\delta
\in\calX \hookrightarrow L^2(V;I)$, and the fourth equality follows from
the definition of the dual operator $((B_1^\bsy)^{\dagger})' : \calX' \to
L^2(V;I)$, noting that $(L^2(V';I))' = L^2(V;I)$.

Next, using the definition of the dual operator $(E_T)':L^2(D) \to
\calX'$, we can write
\begin{align*}
 {\rm Term}_2
 &= \big\langle E_T\big(u^\bsy(z)-\widehat{u}\big), E_T(B_1^\bsy)^{\dagger}\delta \big\rangle_{L^2(D)} \\
 &= \big\langle E_T'E_T\big(u^\bsy(z)-\widehat{u}\big), (B_1^\bsy)^{\dagger}\delta \big\rangle_{\calX',\calX} \\
 &= \big\langle \big((B_1^\bsy)^{\dagger}\big)' E_T' E_T\big(u^\bsy(z)-\widehat{u}\big),\delta \big\rangle_{L^2(V;I),L^2(V';I)}.
\end{align*}

Finally, using the definition of the $L^2(V',I)$ inner product and the
extended inverse Riesz operator $R_V^{-1}\!:L^2(V';I)\to L^2(V;I)$, we
obtain
\begin{align*}
 {\rm Term}_3
 \,=\, \langle z, \delta\rangle_{L^2(V';I)}
 \,=\, \langle R_V^{-1} z, R_V^{-1} \delta\rangle_{L^2(V;I)}
 \,=\, \big\langle R_V^{-1} z,\delta \big\rangle_{L^2(V;I),L^2(V';I)}.
\end{align*}
Writing $(\partial_z J(z))\, \delta = \langle J'(z),\delta\rangle_{L^2(V;I),L^2(V';I)}$ and collecting the terms above leads to the expression for $J'(z)$ in \eqref{eq:Frechet}. 
\end{proof}

We call $J'(z)$ the gradient of $J(z)$ and show next, that $J'(z)$ can be
computed using the solution of the dual problem \eqref{eq:weakdual} with
\begin{align}\label{eq:f-dual2}
 f_{{\rm dual}} := (\alpha_1 R_V + \alpha_2 E_T'E_T)(u^\bsy - \widehat{u}) \in \calX'\,.
\end{align}
We show this first for the special case when $\calR$ is linear.

\begin{lemma} \label{lem:grad}
Let $\alpha_1,\alpha_2 \geq 0$ and $\alpha_3 > 0$, with $\alpha_1 +
\alpha_2 > 0$. Let $f = (z,u_0) \in \calY'$ and $\widehat{u} \in
\calX$. For every $\bsy \in U$, let $u^\bsy \in \calX$ be the solution of
\eqref{eq:model} and then let $q^\bsy \in \calY$ be the solution of
\eqref{eq:weakdual} with $f_{\rm dual}$ given by \eqref{eq:f-dual2}. Then
for $\calR$ linear, the gradient of \eqref{eq:J-reduced} is given as an
element of $L^2(V;I)$ by
\begin{align}\label{eq:gradient}
    J'(z) = \calR(q_1) + \alpha_3 R_V^{-1}z
\end{align}
for $z \in L^2(V';I)$.
\end{lemma}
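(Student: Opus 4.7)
The proof reduces to a direct substitution combining Lemma~\ref{lemma:gradient} with Lemma~\ref{lemma:dual_sol}, so the plan is short and the main work is bookkeeping.

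First I would verify that the proposed right-hand side in \eqref{eq:f-dual2}, namely $f_{\rm dual} = (\alpha_1 R_V + \alpha_2 E_T'E_T)(u^\bsy - \widehat u)$, is an admissible input for the dual problem, i.e. $f_{\rm dual}\in\calX'$, so that Lemma~\ref{lemma:dual_sol} can be invoked. Since $u^\bsy\in\calX$ by well-posedness of \eqref{eq:weakPDE} and $\widehat u\in\calX$ by hypothesis, we have $u^\bsy-\widehat u\in\calX\hookrightarrow L^2(V;I)$. The extended Riesz operator sends $L^2(V;I)$ into $L^2(V';I)$, which in turn embeds continuously into $\calX'$ via $\calX\hookrightarrow L^2(V;I)$. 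The terminal-trace operator $E_T\!:\calX\to L^2(D)$ is bounded by the continuous embedding $\calX\hookrightarrow\calC(L^2(D);I)$ recorded in Subsection~\ref{sec:spaces}, so its dual $E_T'\!:L^2(D)\to\calX'$ is bounded, and hence $f_{\rm dual}\in\calX'$.

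Second, I would apply Lemma~\ref{lemma:dual_sol} with this choice of $f_{\rm dual}$, which immediately yields the explicit representation of the first component of the dual solution,
\[
q_1^\bsy \;=\; \big((B_1^\bsy)^\dagger\big)' f_{\rm dual}
\;=\; \big((B_1^\bsy)^\dagger\big)' \big(\alpha_1 R_V + \alpha_2 E_T'E_T\big)(u^\bsy-\widehat u).
\]
This is precisely the argument appearing under $\calR$ in the Fr\'echet derivative formula \eqref{eq:Frechet} established in Lemma~\ref{lemma:gradient}. Substituting it back into \eqref{eq:Frechet} produces \eqref{eq:gradient}. The linearity assumption on $\calR$ enters only through the invocation of Lemma~\ref{lemma:gradient} and is not used again in the substitution itself.

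The only genuinely delicate issue, and the step most likely to need a comment in the written proof, is the interpretation of $\calR$ evaluated at the $\bsy$-dependent $L^2(V;I)$-valued map $q_1^\bsy$ rather than at a real-valued random variable as initially defined. For linear risk measures of the form $\calR(X)=\int_U X(\bsy)\,\rd\bsy$ (in particular the expected value), this extends naturally to Bochner-integrable maps into a separable Banach space, which is exactly the discretisation-invariant setting the paper develops; I would flag this extension explicitly but the identity \eqref{eq:gradient} then follows with no further calculation.
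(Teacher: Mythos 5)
Your proposal is correct and follows exactly the route of the paper's own (one-line) proof: apply Lemma~\ref{lemma:dual_sol} with $f_{\rm dual}$ from \eqref{eq:f-dual2} to identify $q_1^\bsy = ((B_1^\bsy)^\dagger)' f_{\rm dual}$ and substitute this into \eqref{eq:Frechet} from Lemma~\ref{lemma:gradient}. The extra checks you include (that $f_{\rm dual}\in\calX'$ and the remark on applying $\calR$ to Banach-space-valued integrands) are sensible elaborations but not a different argument.
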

\begin{proof}
This follows immediately from \eqref{eq:f-dual2},
Lemma~\ref{lemma:gradient} and Lemma~\ref{lemma:dual_sol}.  
\end{proof}

\begin{proposition}
Under the conditions of Lemma~\ref{lem:grad}, with $f_{\rm dual}$ given by
\eqref{eq:f-dual2}, the dual solution $q^\bsy = (q_1^\bsy,q_2^\bsy) \in
\calY$ satisfies
\[
 q_2^\bsy = q_1^\bsy(\cdot,0).
\]
Consequently, the left-hand side of \eqref{eq:weakdual} reduces to
\begin{align}\label{eq:weakdual3}
 \int_I \big\langle w,-\tfrac{\partial}{\partial t}q_1^\bsy\big\rangle_{V,V'}\, \rd t
 + \int_I \int_D \big(a^\bsy \nabla w \cdot \nabla q_1^\bsy\big)\,\rd \bsx\,\rd t
 + \int_D w(\cdot,T)\,q_1^\bsy(\cdot,T)\,\rd \bsx\,,
\end{align}
and hence $q_1^\bsy$ is the solution to
\begin{align} \label{eq:strongdual}
 \begin{cases}
 -\frac{\partial}{\partial t} q_1^{\bsy}(\bsx,t)-\nabla\cdot \big(a^{\bsy}(\bsx,t)\nabla q_1^{\bsy}(\bsx,t)\big)
 = \alpha_1 R_V\big(u^\bsy(\bsx,t)-\widehat{u}(\bsx,t)\big) \\
 q_1^{\bsy}(\bsx,t) = 0 \\
 q_1^{\bsy}(\bsx,T) = \alpha_2 \big(u^\bsy(\bsx,T)-\widehat{u}(\bsx,T)\big),
 \end{cases}
\end{align}
where the first equation holds for $\bsx \in D$, $t\in I$,  and the second equation holds for $\bsx \in \partial D$, $t\in I$, and the last equation holds for $x\in D$.
\end{proposition}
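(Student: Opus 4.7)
The plan is to unfold the dual variational identity \eqref{eq:weakdual} using the integration-by-parts form \eqref{eq:vardual} for the left-hand side, substitute the explicit expression $f_{\rm dual}=(\alpha_1 R_V+\alpha_2 E_T'E_T)(u^\bsy-\widehat{u})$ on the right, and then extract three pieces of information -- the interior PDE, the terminal condition, and the identity $q_2^\bsy=q_1^\bsy(\cdot,0)$ -- by probing with three separate families of test functions $w\in\calX$. Via the embedding $\calX\hookrightarrow L^2(V;I)$ and the definition of $E_T'$, the pairing $\langle w,f_{\rm dual}\rangle_{\calX,\calX'}$ simplifies to $\alpha_1\int_I\!\int_D\nabla w\cdot\nabla(u^\bsy-\widehat{u})\,\rd\bsx\,\rd t+\alpha_2\int_D w(\cdot,T)(u^\bsy(\cdot,T)-\widehat{u}(\cdot,T))\,\rd\bsx$.

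First, I would test with $w\in\calX$ supported in the time-interior, so that both $w(\cdot,0)$ and $w(\cdot,T)$ vanish. The two boundary-in-time integrals in \eqref{eq:vardual} drop out, and what remains is the weak form of $-\tfrac{\partial}{\partial t}q_1^\bsy-\nabla\cdot(a^\bsy\nabla q_1^\bsy)=\alpha_1 R_V(u^\bsy-\widehat{u})$ in $L^2(V';I)$; the homogeneous Dirichlet condition is encoded by $q_1^\bsy(\cdot,t)\in V=H_0^1(D)$. As a by-product this shows $\tfrac{\partial}{\partial t}q_1^\bsy\in L^2(V';I)$, so $q_1^\bsy\in\calX$ and its traces at $t=0,T$ live in $L^2(D)$. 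Next, I would choose $w(\bsx,t)=(t/T)\phi(\bsx)$ with $\phi\in V$, so $w(\cdot,0)=0$ and $w(\cdot,T)=\phi$; substituting this into the dual identity and using the PDE just derived to cancel the volume contributions leaves $\int_D\phi\,q_1^\bsy(\cdot,T)\,\rd\bsx=\alpha_2\int_D\phi\,(u^\bsy(\cdot,T)-\widehat{u}(\cdot,T))\,\rd\bsx$ for every $\phi\in V$, and density of $V$ in $L^2(D)$ yields the terminal condition $q_1^\bsy(\cdot,T)=\alpha_2(u^\bsy(\cdot,T)-\widehat{u}(\cdot,T))$. Finally, the mirror choice $w(\bsx,t)=(1-t/T)\phi(\bsx)$ with $\phi\in V$ (so $w(\cdot,0)=\phi$, $w(\cdot,T)=0$) produces, after the same volume-term cancellation, $\int_D\phi\,(q_2^\bsy-q_1^\bsy(\cdot,0))\,\rd\bsx=0$ for every $\phi\in V$, hence $q_2^\bsy=q_1^\bsy(\cdot,0)$ in $L^2(D)$.

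Once $q_2^\bsy=q_1^\bsy(\cdot,0)$ is in hand, the two integrals over $D$ at $t=0$ in \eqref{eq:vardual} cancel, reducing its left-hand side precisely to \eqref{eq:weakdual3}; combining this with the interior PDE and the terminal condition then reads off \eqref{eq:strongdual}. The main technical point to verify is that the affine-in-$t$ test functions $w(\bsx,t)=\eta(t)\phi(\bsx)$ with $\phi\in V$ and $\eta$ linear genuinely belong to $\calX$ -- immediate since $V\hookrightarrow L^2(D)\hookrightarrow V'$ makes $\tfrac{\partial}{\partial t}w\in L^2(V';I)$ -- and that their initial/terminal traces exhaust a dense subset of $L^2(D)$, which holds because $V=H_0^1(D)$ is dense in $L^2(D)$. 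I do not anticipate a deeper obstacle: this is the standard variational recovery of a backward parabolic problem, adapted to the present product-space framework where the extra component $q_2^\bsy$ merely records the initial-time trace of $q_1^\bsy$.
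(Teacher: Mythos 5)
Your argument is correct, and it reaches the conclusion by a genuinely different route from the paper. The paper uses a single one-parameter family of test functions $w_n(\bsx,t)=(1-\tfrac{nt}{T})v(\bsx)$ supported on $[0,\tfrac{T}{n}]$, lets $n\to\infty$ so that all volume and terminal contributions vanish, and reads off $q_2^\bsy=q_1^\bsy(\cdot,0)$ directly; the strong form \eqref{eq:strongdual}, including the terminal condition, is then obtained only ``by analogy'' with the weak form of \eqref{eq:model} under the time reversal $t\mapsto T-t$. You instead probe with three families -- time-interior supported $w$, $(t/T)\phi$, and $(1-t/T)\phi$ -- first extracting the interior equation (and with it $\tfrac{\partial}{\partial t}q_1^\bsy\in L^2(V';I)$, so that the traces of $q_1^\bsy$ at $t=0,T$ are honest $L^2(D)$ functions), then the terminal condition, then $q_2^\bsy=q_1^\bsy(\cdot,0)$, each time using the already-derived PDE to cancel the volume terms. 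Your version is more self-contained: it delivers \eqref{eq:strongdual} explicitly rather than via the time-reversal analogy, and it justifies the trace regularity that the identity \eqref{eq:vardual} tacitly uses; the paper's concentration argument is shorter for the single identity $q_2^\bsy=q_1^\bsy(\cdot,0)$ and needs no prior knowledge of the interior equation. One small presentational caveat: since you start from \eqref{eq:vardual}, which already displays $-\tfrac{\partial}{\partial t}q_1^\bsy$ and the traces, your ``by-product'' regularity claim is circular as written; to make it genuine, run the interior-test-function step on the pre-integration-by-parts form \eqref{eq:bilinear} (i.e. on $\langle B^\bsy w,q^\bsy\rangle_{\calY',\calY}$ with $w=\eta(t)\phi(\bsx)$, $\eta\in C_c^\infty(0,T)$), identify the distributional time derivative of $q_1^\bsy$ from the resulting identity, and only then integrate by parts -- this is the same level of formality the paper allows itself, so it does not affect the verdict.
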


\begin{proof}
Since \eqref{eq:weakdual} holds for arbitrary $w\in\calX$, it holds in
particular for the special case
\begin{align*}
 w = w_n(\bsx,t) := \begin{cases}
 \big(1-\tfrac{nt}{T}\big)\,v(\bsx) & \text{for } t\in \left[0,\tfrac{T}{n} \right]\,,\\
 0 & \text{for } t\in \left(\tfrac{T}{n},T \right]\,,
\end{cases}
\end{align*}
with arbitrary $v \in V$. For $f_{\rm dual}$ given by
\eqref{eq:f-dual2}, the right-hand side of \eqref{eq:weakdual} becomes
\begin{align*}
 &\langle w_n, f_{{\rm dual}} \rangle_{\calX,\calX'} \\
 &= \big\langle w_n, \alpha_1 R_V (u^\bsy-\widehat{u}) \big\rangle_{\calX,\calX'}
  + \big\langle w_n(\cdot,T), \alpha_2\big(u^\bsy(\cdot,T)-\widehat{u}(\cdot,T)\big) \big\rangle_{L^2(D)} \nonumber\\
 &= \int_0^{\frac{T}{n}} \!\int_D \big(1 - \tfrac{nt}{T}\big)\,v\, \alpha_1 R_V(u^\bsy-\widehat{u}) \,\rd\bsx\, \rd t
 \,\,\, \to 0 \,\,\,\mbox{as}\,\,\, n\to\infty\,.
\end{align*}	
From \eqref{eq:vardual} the left-hand side of \eqref{eq:weakdual} is now
\begin{align*} 
 &\langle w_n, (B^\bsy)' q^\bsy \rangle_{\calX,\calX'} \\
 &= \int_0^{\frac{T}{n}} \! \big(1 - \tfrac{nt}{T}\big) \big\langle v,-\tfrac{\partial}{\partial t}q_1^\bsy\big\rangle_{V,V'}\, \rd t
  + \int_0^{\frac{T}{n}} \! \int_D \big(1 - \tfrac{nt}{T}\big)
 \big(a^\bsy \nabla v \cdot \nabla q_1^\bsy\big)\,\rd \bsx\,\rd t \nonumber \\
 & \qquad
 - \int_D v\,q_1^\bsy(\cdot,0)\,\rd \bsx
 + \int_D v\,q_2^\bsy\,\rd \bsx \\
 &\to \int_D v\,\big(q_2^\bsy - q_1^\bsy(\cdot,0)\big)\,\rd \bsx
 \quad\mbox{as}\quad n\to\infty\,.
\end{align*}
Equating the two sides, letting $n\to\infty$, and noting that $v\in
V$ is arbitrary, we conclude that necessarily $q_2^\bsy =
q_1^\bsy(\cdot,0)$.

Hence, the left-hand side of \eqref{eq:weakdual} reduces to
\eqref{eq:weakdual3}. By analogy with the weak form of \eqref{eq:model},
using the transformation $t \mapsto T-t$, we conclude that $q_1^\bsy$ is
the solution to \eqref{eq:strongdual}.  
\end{proof}

\subsection{The entropic risk measure} \label{sec:entropic}

The expected value is risk neutral. Next, we consider risk averse risk
measures such as the entropic risk measure
\begin{align*}
    \calR_{{\rm e}}(Y(\bsy)) :=
    \frac{1}{\theta} \ln \Big( \int_U \exp\big(\theta\,Y(\bsy)\big) \,\mathrm d\bsy \Big)\,,
\end{align*}
for an essentially bounded random variable $Y(\bsy)$ and some
$\theta\in(0,\infty)$. Using $\calR = \calR_{{\rm e}}$ in
\eqref{eq:J-reduced}, the optimal control problem becomes $\min_{z \in
\mathcal Z} J(z)$, with
\begin{align}\label{eq:J-entropic}
    J(z) = \frac{1}{\theta} \ln
    \Big( \int_U \exp\big(\theta\,\Phi^\bsy(z)\big) \,\mathrm d\bsy \Big)
    + \frac{\alpha_3}{2} \|z\|_{L^2(V';I)}^2\,,
\end{align}
for some $\theta\in(0,\infty)$ and $\Phi^\bsy$ defined in \eqref{eq:Phi}.

In the following we want to compute the Fr\'echet derivative of $J(z)$
with respect to $z \in L^2(V';I)$. To this end, we verify that
$\Phi^\bsy(z) \leq C < \infty$ is uniformly bounded in $\bsy \in U$ for
any $z\in L^2(V';I)$, i.e.~the constant $C>0$ is independent of $\bsy \in
U$.

\begin{lemma}\label{lemma:Phi}
Let $f = (z,u_0)\in\calY'$ and $\widehat{u} \in \calX$, and let
$\alpha_1,\alpha_2 \geq 0$ with $\alpha_1+\alpha_2 >0$. Then for all
$\bsy\in U$, the function $\Phi^\bsy$ defined by \eqref{eq:Phi} satisfies
\begin{align} \label{eq:Phi-bound}
  0\le \Phi^\bsy \leq
  \frac{\alpha_1+\alpha_2\,\|E_T\|_{\calX\to L^2(D)}^2}{2}\,
 \Big(\frac{\|f\|_{\calY'}}{\beta_1}+\|\widehat{u}\|_{\calX}\Big)^2 < \infty.
\end{align}
Thus for all $\theta>0$ we have
\begin{align}
 &1 \le \exp\big(\theta\,\Phi^\bsy\big) \le e^\sigma <\infty, \quad\mbox{with}\quad \label{eq:sigma1} \\
 &\sigma := \frac{\alpha_1+\alpha_2\,\|E_T\|_{\calX\to L^2(D)}^2}{2}\,
 \Big(\frac{\|f\|_{\calY'}}{\beta_1}+\|\widehat{u}\|_{\calX}\Big)^2 \theta.\label{eq:sigma2}
\end{align}
\end{lemma}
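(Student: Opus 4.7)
The plan is to prove the two displayed bounds in sequence, with the first doing essentially all the work and the second being an immediate monotonicity consequence.

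For the lower bound $\Phi^\bsy \ge 0$ in \eqref{eq:Phi-bound}, the definition \eqref{eq:Phi} expresses $\Phi^\bsy$ as a nonnegative combination (recall $\alpha_1,\alpha_2\ge 0$) of squared norms, so this is immediate and needs no calculation. For the upper bound, the strategy is to apply the triangle inequality to $u^\bsy - \widehat{u} = (B^\bsy)^{-1}(z,u_0) - \widehat{u}$ in $\calX$, then bound the $L^2(V;I)$-piece using $\calX \hookrightarrow L^2(V;I)$ with constant $1$ (which is visible directly from the definition of $\|\cdot\|_\calX$), and bound the terminal-value piece using the definition of the operator norm $\|E_T\|_{\calX\to L^2(D)}$. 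In formulas, I would write
\begin{align*}
 \|u^\bsy-\widehat{u}\|_{L^2(V;I)} &\le \|u^\bsy-\widehat{u}\|_\calX \le \|u^\bsy\|_\calX + \|\widehat{u}\|_\calX,\\
 \|E_T(u^\bsy-\widehat{u})\|_{L^2(D)} &\le \|E_T\|_{\calX\to L^2(D)}\,\bigl(\|u^\bsy\|_\calX + \|\widehat{u}\|_\calX\bigr),
\end{align*}
and then invoke the \emph{a priori} estimate \eqref{eq:u-apriori}, namely $\|u^\bsy\|_\calX \le \|f\|_{\calY'}/\beta_1$, which holds uniformly in $\bsy\in U$. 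Squaring and combining yields the stated upper bound, which is finite because $f\in\calY'$ and $\widehat{u}\in\calX$ are fixed and $\beta_1>0$ by \eqref{eq:beta}.

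For \eqref{eq:sigma1}--\eqref{eq:sigma2}, the monotonicity of $\exp$ on $\mathbb{R}$ together with $\Phi^\bsy\ge 0$ and $\theta>0$ gives $\exp(\theta\Phi^\bsy)\ge\exp(0)=1$, while substituting the upper bound from \eqref{eq:Phi-bound} into $\exp(\theta\,\cdot\,)$ produces exactly $e^\sigma$ with $\sigma$ as defined in \eqref{eq:sigma2}. No further work is required.

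There is no real obstacle here; the only subtlety worth flagging is that the a priori bound \eqref{eq:u-apriori} is crucial because it is uniform in $\bsy$, which is what makes $\sigma$ (and hence the exponential bound) independent of the parameter. I would make that uniformity explicit in the write-up so the reader sees immediately why $\exp(\theta\Phi^\bsy)$ is uniformly essentially bounded, a property that will later justify differentiating under the integral sign when computing the Fr\'echet derivative of \eqref{eq:J-entropic}.
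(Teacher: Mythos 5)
Your proposal is correct and follows essentially the same route as the paper's proof: bound both terms of $\Phi^\bsy$ by the $\calX$-norm of $u^\bsy-\widehat{u}$ (using $\|\cdot\|_{L^2(V;I)}\le\|\cdot\|_\calX$ and the operator norm of $E_T$), apply the triangle inequality and the uniform a priori estimate \eqref{eq:u-apriori}, and then deduce \eqref{eq:sigma1}--\eqref{eq:sigma2} by monotonicity of the exponential. Your explicit remarks on the lower bound $\Phi^\bsy\ge 0$ and on the uniformity in $\bsy$ are fine additions but do not change the argument.
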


\begin{proof}
We have from \eqref{eq:Phi} that
\begin{align*}
 \Phi^\bsy(z)
 &\le \frac{\alpha_1}{2} \big\|(B^\bsy)^{-1}f- \widehat{u}\big\|^2_{\calX}
 + \frac{\alpha_2}{2} \|E_T\|_{\calX\to L^2(D)}^2 \big\|(B^\bsy)^{-1}f-\widehat{u}\big\|^2_{\calX} \nonumber\\
 &\le \frac{\alpha_1+\alpha_2\,\|E_T\|_{\calX\to L^2(D)}^2}{2}\,
 \big(\big\|(B^\bsy)^{-1}f\big\|_\calX + \|\widehat{u}\|_\calX\big)^2\,,
\end{align*}
which yields \eqref{eq:Phi-bound} after applying \eqref{eq:u-apriori}.
 
\end{proof}

Using the preceding lemma, we compute the gradient of \eqref{eq:J-entropic}.
\begin{lemma}
Let $\alpha_1,\alpha_2 \geq 0$ and $\alpha_3 > 0$, with $\alpha_1 +
\alpha_2 > 0$, and let $0<\theta<\infty$. Let $f = (z,u_0) \in \calY'$ and
$\widehat{u} \in \calX$. For every $\bsy \in U$, let $u^\bsy \in
\calX$ be the solution of \eqref{eq:model} and then let $q^\bsy =
(q_1^\bsy,q_2^\bsy) \in \calY$ be the solution of \eqref{eq:weakdual} with
$f_{\rm dual}$ given by \eqref{eq:f-dual2}. Then the gradient of
\eqref{eq:J-entropic} is given as an element of $L^2(V;I)$ for $z \in
L^2(V';I)$ by
\begin{align}\label{eq:gradient_entropic}
    J'(z) = \frac{1}{\int_U  \exp\big(\theta\,\Phi^\bsy(z)\big) \,\mathrm d\bsy}
    \int_U \exp\big(\theta\,\Phi^\bsy(z)\big)\, q_1^\bsy\,\mathrm d\bsy + \alpha_3 R_V^{-1}z
\end{align}
where $\Phi^\bsy(z)$ is defined in \eqref{eq:Phi}.
\end{lemma}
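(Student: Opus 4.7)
The plan is to write $J(z) = \tfrac{1}{\theta}\ln G(z) + \tfrac{\alpha_3}{2}\|z\|_{L^2(V';I)}^2$ with
\[
G(z) \;:=\; \int_U \exp\big(\theta\,\Phi^\bsy(z)\big)\,\mathrm d\bsy,
\]
and reduce the computation of $J'(z)$ to (i) differentiating $G$ under the integral sign and (ii) identifying the pointwise-in-$\bsy$ derivative of $\Phi^\bsy$ with $q_1^\bsy$. The quadratic term contributes $\alpha_3 R_V^{-1}z$ exactly as in ${\rm Term}_3$ of the proof of Lemma~\ref{lemma:gradient}, so I will concentrate on the logarithmic term.

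First I would observe that Lemma~\ref{lemma:Phi} gives $1 \le \exp(\theta\Phi^\bsy(z)) \le e^\sigma$ uniformly in $\bsy\in U$, so $G(z)\in[1,e^\sigma]$ and in particular $G(z)\neq 0$. Hence the classical chain rule applies and
\[
J'(z) \;=\; \frac{1}{\theta\,G(z)}\,G'(z) \;+\; \alpha_3 R_V^{-1}z,
\]
once $G$ is shown to be Fréchet differentiable and its derivative computed.

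Next I would compute $(\Phi^\bsy)'(z)$ for fixed $\bsy\in U$. Running the argument in the proof of Lemma~\ref{lemma:gradient} with $\calR$ replaced by the identity and $\alpha_3=0$, and then applying \eqref{eq:f-dual2}, Lemma~\ref{lemma:dual_sol} and Lemma~\ref{lem:grad} (the linear case) pointwise in $\bsy$, gives
\[
(\Phi^\bsy)'(z)\,\delta \;=\; \big\langle q_1^\bsy,\delta\big\rangle_{L^2(V;I),L^2(V';I)} \qquad \forall\,\delta\in L^2(V';I),
\]
i.e.\ the gradient of $\Phi^\bsy$ is precisely $q_1^\bsy$. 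Combining with the chain rule applied to the scalar map $t\mapsto e^t$ yields, pointwise in $\bsy$,
\[
\partial_z\!\big[\exp(\theta\,\Phi^\bsy(z))\big]\,\delta
\;=\; \theta\,\exp(\theta\,\Phi^\bsy(z))\,\big\langle q_1^\bsy,\delta\big\rangle_{L^2(V;I),L^2(V';I)}.
\]

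The main obstacle is justifying the interchange of differentiation and integration, so I would invest the bulk of the work there. The strategy is a standard dominated-convergence argument: for a perturbation $\delta$ the mean-value form
\[
\exp(\theta\Phi^\bsy(z{+}\delta)) - \exp(\theta\Phi^\bsy(z))
\;=\; \theta\,\exp(\theta\Phi^\bsy(\xi_\bsy))\big(\Phi^\bsy(z{+}\delta)-\Phi^\bsy(z)\big),
\]
combined with the uniform bound $\exp(\theta\Phi^\bsy(\cdot))\le e^\sigma$ from Lemma~\ref{lemma:Phi} on a small neighborhood of $z$, and the a~priori bound $\|q_1^\bsy\|_{L^2(V;I)}\le \|((B_1^\bsy)^\dagger)'\|\,\|f_{\rm dual}\|_{\calX'}\le \beta_1^{-1}\|f_{\rm dual}\|_{\calX'}$ derived from \eqref{eq:beta} and \eqref{eq:dualexplicit}, furnishes an $L^1(U)$-dominating function and gives measurability in $\bsy$. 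This licenses both the Fréchet differentiation of $G$ and the representation
\[
G'(z)\,\delta \;=\; \theta\int_U \exp(\theta\Phi^\bsy(z))\,\big\langle q_1^\bsy,\delta\big\rangle_{L^2(V;I),L^2(V';I)}\,\mathrm d\bsy.
\]

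Finally, I would pull $\delta$ outside the integral via a Bochner-integral argument (the integrand $\bsy\mapsto \exp(\theta\Phi^\bsy(z))\,q_1^\bsy$ takes values in $L^2(V;I)$ and is Bochner integrable by the uniform bounds above), and identify
\[
G'(z) \;=\; \theta\int_U \exp(\theta\Phi^\bsy(z))\,q_1^\bsy\,\mathrm d\bsy \;\in\; L^2(V;I).
\]
Substituting into the chain-rule expression cancels the factors of $\theta$ and yields exactly~\eqref{eq:gradient_entropic}, completing the proof.
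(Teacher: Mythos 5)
Your proposal is correct and follows essentially the same route as the paper: chain rule for the logarithm, differentiation under the integral sign, and identification of $\partial_z\Phi^\bsy(z)$ with $q_1^\bsy$ via the linear-case computation of Lemma~\ref{lemma:gradient} together with \eqref{eq:f-dual2} and Lemma~\ref{lemma:dual_sol}. Your dominated-convergence justification of the interchange of differentiation and integration is somewhat more detailed than the paper's brief appeal to the boundedness and linearity of the integral operator, but this is a refinement of the same argument rather than a different approach.
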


\begin{proof}
The application of the chain rule gives
\begin{align*}
    \partial_z \mathcal R_{{\rm e}}(\Phi^\bsy(z)) = \frac{1}{\theta\int_U  \exp\big(\theta\,\Phi^\bsy(z)\big)\, \mathrm d\bsy}
    \partial_z \Big(\int_U \exp\big(\theta\,\Phi^\bsy(z)\big)\,\mathrm d\bsy\Big)\,.
\end{align*}
Lemma \ref{lemma:Phi} implies that $1 \le \int_U
\exp\big(\theta\,\Phi^\bsy(z)\big)\,\rd\bsy <\infty$. Then the integral
is a bounded and linear operator and hence its Fr\'echet derivative is the
operator itself. Exploiting this fact, we obtain
that $\partial_z \left(\int_U \exp\big(\theta\,\Phi^\bsy(z)\big)\,\mathrm
d\bsy\right) = \int_U \left(
\partial_z \exp\big(\theta\,\Phi^\bsy(z)\big)\right) \mathrm d\bsy$. By
the chain rule it follows for each $\bsy \in U$ that $
 \partial_z
\exp\big(\theta\,\Phi^\bsy(z)\big) = \theta \,
\exp\big(\theta\,\Phi^\bsy(z)\big)\, \partial_z \Phi^\bsy(z)\,.
$
Recalling
from the previous subsection that $\partial_z
(\frac{\alpha_3}{2}\|z\|_{L^2(V';I)}^2) = \alpha_3 R_V^{-1} z$ and
$
    \partial_z \Phi^\bsy(z) = \big((B_1^\bsy)^{\dagger}\big)'(\alpha_1 R_V +\alpha_2 E_T'E_T)(u^\bsy(z)-\widehat{u}) = q_1^\bsy\,,
$
and collecting terms gives \eqref{eq:gradient_entropic}.  
\end{proof}

\subsection{Optimality conditions} \label{sec:opt-cond}

In the case when the feasible set of controls $\mathcal{Z}$ is a nonempty
and convex set, we know (see, e.g., \cite[Lemma
2.21]{Troeltzsch2010}) that the optimal control $z^*$ satisfies the
variational inequality
\begin{align} \label{eq:var-ineq}
  \langle J'(z^*), z - z^* \rangle_{L^2(V;I),L^2(V';I)} \geq 0 \quad \forall z \in \mathcal Z\,.
\end{align}
For convex objective functionals $J(z)$, like the ones considered in this
work, the variational inequality is a necessary and sufficient condition
for optimality. The complete optimality conditions are then given by the
following result.
\begin{theorem} \label{thm:KKT}
Let $\mathcal R$ be the expected value or the entropic risk measure. A
control $z^* \in L^2(V';I)$ is the unique minimizer of
\eqref{eq:objective} subject to \eqref{eq:model} and
\eqref{eq:boxconstraints} if and only if it satisfies the optimality
system: 
\begin{align*}
 \begin{cases}
 \langle B^\bsy u^\bsy, (v_1,v_2) \rangle_{\calY',\calY}
 = \langle z^*, v_1 \rangle_{L^2(V';I),L^2(V;I)} + \langle u_0, v_2 \rangle_{L^2(D)} \,\,\,\forall\,v\in \calY,
 \\
 \langle w, (B^\bsy)' q^\bsy \rangle_{\calX,\calX'}
 = \langle w, \alpha_1 R_V(u^\bsy - \widehat{u}) \rangle_{\calX,\calX'} \\
 \qquad\qquad\qquad\qquad\qquad
 + \langle w(T), \alpha_2(u^\bsy(T)-\widehat{u}(T)) \rangle_{L^2(D)} \,\,\, \forall\, w \in \calX,
 \\
 z^* \in \mathcal{Z}\,,
 \\
 \langle J'(z^*), z - z^* \rangle_{L^2(V;I),L^2(V';I)} \geq 0 \quad \forall z \in \mathcal Z\,,
 \end{cases}
\end{align*}
which holds for all $\bsy \in U$, and $J'(z)$ is given by
\eqref{eq:gradient} for the expected value, or
\eqref{eq:gradient_entropic} for the entropic risk measure.
\end{theorem}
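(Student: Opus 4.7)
The plan is to combine three ingredients already in hand: (i) existence and uniqueness of the minimizer from Lemma~\ref{lem:exist_unique}, (ii) the standard variational-inequality characterization of minima of convex, Fr\'echet differentiable functionals over nonempty closed convex sets, and (iii) the explicit gradient representations \eqref{eq:gradient} and \eqref{eq:gradient_entropic} derived in Subsections~\ref{sec:linear} and~\ref{sec:entropic}. In essence the theorem is an assembly statement that packages these facts into a single necessary-and-sufficient system.

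First I would verify that the hypotheses of Lemma~\ref{lem:exist_unique} are met by both risk measures. The expected value $\calR(Y)=\int_U Y\,\rd\bsy$ is linear (hence convex), monotonic, proper, and continuous, so in particular closed. For the entropic risk measure $\calR_{\rm e}$, convexity follows from H\"older's inequality applied to $\exp(\theta\,\cdot)$, monotonicity from the monotonicity of $t\mapsto e^{\theta t}$ and of $\ln$, lower semicontinuity from Fatou's lemma, and properness together with finiteness on the relevant random variables from the uniform bounds of Lemma~\ref{lemma:Phi}. Lemma~\ref{lem:exist_unique} then delivers a unique minimizer $z^*\in\mathcal Z$.

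Next I would invoke the first-order characterization. The reduced objective $J$ is strongly convex (via the $\tfrac{\alpha_3}{2}\|z\|_{L^2(V';I)}^2$ term) and Fr\'echet differentiable on $L^2(V';I)$, with derivative given by \eqref{eq:gradient} in the linear case and \eqref{eq:gradient_entropic} in the entropic case. Since $\mathcal Z$ is nonempty, closed and convex, a standard convex-analytic result (\cite[Lemma~2.21]{Troeltzsch2010}) yields that $z^*\in\mathcal Z$ is the unique minimizer if and only if $\langle J'(z^*),z-z^*\rangle_{L^2(V;I),L^2(V';I)}\geq 0$ for all $z\in\mathcal Z$, which is precisely the fourth line of the optimality system.

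Finally I would recognize the remaining two lines of the system as nothing more than the \emph{definitions} needed to evaluate $J'(z^*)$ through \eqref{eq:gradient}/\eqref{eq:gradient_entropic}: the first line is the weak form \eqref{eq:weakPDE} that determines $u^\bsy=(B^\bsy)^{-1}(z^*,u_0)$, and the second line is the dual problem \eqref{eq:weakdual} with $f_{\rm dual}$ given by \eqref{eq:f-dual2}, whose unique solution $q^\bsy$ (by the bounded invertibility of $(B^\bsy)'$) produces the $q_1^\bsy$ appearing in the gradient formulas. The converse direction is immediate: given any $u^\bsy,q^\bsy$ satisfying these two equations for every $\bsy\in U$, they must coincide with the state and dual responses of $z^*$, so the variational inequality obtained from them is exactly the one characterizing the minimizer. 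The only subtle point I would have to double-check is the verification of the convex-analytic prerequisites for the entropic risk measure on the ambient $L^p$ space of the composition $\bsy\mapsto\Phi^\bsy(z)$; once this is in place, the remainder is pure assembly.
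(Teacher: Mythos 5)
Your proposal is correct and follows essentially the same route as the paper, which offers no separate proof but treats the theorem as an assembly of the preceding results: unique solvability from Lemma~\ref{lem:exist_unique}, the variational-inequality characterization via \cite[Lemma~2.21]{Troeltzsch2010} together with convexity of $J$, and the gradient formulas \eqref{eq:gradient} and \eqref{eq:gradient_entropic}, with the first two equations of the system being exactly the weak state equation \eqref{eq:weakPDE} and the dual problem \eqref{eq:weakdual} with $f_{\rm dual}$ from \eqref{eq:f-dual2}. Your extra care in checking the convex-analytic hypotheses for the entropic risk measure (using Lemma~\ref{lemma:Phi}) is consistent with, and slightly more explicit than, what the paper records.
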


Observe that the optimality system in Theorem~\ref{thm:KKT} contains
the variational formulations of the state PDE \eqref{eq:weakPDE} and the
dual PDE \eqref{eq:weakdual} in the first and second equation,
respectively.

It is convenient to reformulate the variational inequality
\eqref{eq:var-ineq} in terms of an orthogonal projection onto $\mathcal
Z$. The orthogonal projection onto a nonempty, closed and convex subset
$\mathcal{Z} \subset H$ of a Hilbert space $H$, denoted by
$P_{\mathcal{Z}}: H \to \mathcal{Z}$, is defined as
\begin{align*}
    P_{\mathcal{Z}}(h) \in \mathcal{Z}\,, \quad \|P_{\mathcal{Z}}(h) - h\|_H = \min_{v \in \mathcal{Z}} \|v - h\|_H\,, \quad \forall h \in H\,.
\end{align*}
Then, see, e.g., \cite[Lemma 1.11]{hinze2009optimization}, for all $h \in
H$ and $\gamma >0$ the condition $h \in \mathcal{Z}$, $\langle h, v -
z\rangle_{H} \geq 0\, \forall v \in \mathcal{Z}$ is equivalent to $z -
P_{\mathcal{Z}}(z - \gamma h) = 0$. Using the definition of the Riesz
operator and $H = L^2(V';I)$, we conclude that \eqref{eq:var-ineq} is
equivalent to
\begin{align*}
    z^* - P_{\mathcal{Z}}(z^* - \gamma R_V J'(z^*)) = 0\,.
\end{align*}
This equivalence can then be used to develop projected descent methods to solve the optimal control problem, see, e.g., \cite[Chapter 2.2.2]{hinze2009optimization}.
\begin{remark}\label{rem:remark}
If $\mathcal{Z}$ is the closed ball with radius $r > 0$ in a Hilbert space
$H$, then the orthogonal projection $P_{\mathcal{Z}}$ is given by
\begin{align*}
   P_{\mathcal{Z}}(h) = \min{\Big(1,\frac{r}{\|h\|_H}\Big)}\,h
    \qquad\mbox{for all } h \in H.
\end{align*}
\end{remark}

\section{Parametric regularity of the adjoint state} \label{sec:reg-adj}

In this section we derive an a priori bound for the adjoint state and the
partial derivatives of the adjoint state with respect to the parametric
variables. Existing results, e.g., \cite[Theorem 4]{KunothSchwab}, do
not directly apply to our case, since the right-hand side of the affine
linear, parametric operator equation depends on the parametric variable,
more specifically
\[
 (B^\bsy)' q^\bsy = (\alpha_1 R_V + \alpha_2 E_T'E_T)(u^\bsy - \widehat{u}).
\]

\begin{lemma} \label{lem:q-apriori} Let $\alpha_1,\alpha_2 \geq 0$ and
$\alpha_3 > 0$, with $\alpha_1 + \alpha_2 > 0$. Let $f = (z,u_0) \in
\calY'$ and $\widehat{u} \in \calX$. For every $\bsy \in U$, let
$u^\bsy \in \calX$ be the solution of \eqref{eq:model} and then let
$q^\bsy \in \calY$ be the solution of \eqref{eq:weakdual} with $f_{\rm
dual}$ given by \eqref{eq:f-dual2}. Then we have
\begin{align*}
  \|q^\bsy\|_{\calY}
  \le \frac{\alpha_1 + \alpha_2\,\|E_T\|^{2}_{\calX\to L^2(D)}}{\beta_1}
  \bigg(\frac{\|f\|_{\calY'}}{\beta_1} + \|\widehat{u}\|_\calX\bigg),
\end{align*}
where $\beta_1$ is described in \eqref{eq:beta}.
\end{lemma}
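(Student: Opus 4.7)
The strategy is a direct operator-norm estimate followed by a careful bound on the right-hand side $f_{\rm dual}$ in the $\calX'$ norm. The dual problem $(B^\bsy)' q^\bsy = f_{\rm dual}$ has a unique solution, and from the discussion following \eqref{eq:weakdual} the operator $((B^\bsy)')^{-1}\colon \calX'\to\calY$ satisfies $\|((B^\bsy)')^{-1}\|_{\calX'\to\calY} \le 1/\beta_1$. Thus the first step is simply
\[
  \|q^\bsy\|_\calY \,\le\, \frac{1}{\beta_1}\,\|f_{\rm dual}\|_{\calX'},
\]
and the whole task reduces to bounding $\|f_{\rm dual}\|_{\calX'}$ with $f_{\rm dual}=(\alpha_1 R_V+\alpha_2 E_T'E_T)(u^\bsy-\widehat{u})$.

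By the triangle inequality, I split this as $\alpha_1\|R_V(u^\bsy-\widehat{u})\|_{\calX'}+\alpha_2\|E_T'E_T(u^\bsy-\widehat{u})\|_{\calX'}$ and handle each piece via duality. For the $R_V$ term, for any $w\in\calX\hookrightarrow L^2(V;I)$ we have
\[
  \langle R_V(u^\bsy-\widehat{u}),w\rangle_{\calX',\calX}
  \,=\, \langle R_V(u^\bsy-\widehat{u}),w\rangle_{L^2(V';I),L^2(V;I)}
  \,=\, \langle u^\bsy-\widehat{u},w\rangle_{L^2(V;I)},
\]
so Cauchy--Schwarz in $L^2(V;I)$ and the embedding $\|\cdot\|_{L^2(V;I)}\le\|\cdot\|_\calX$ give $\|R_V(u^\bsy-\widehat{u})\|_{\calX'}\le\|u^\bsy-\widehat{u}\|_\calX$. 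For the second piece, since $E_T\colon\calX\to L^2(D)$ is bounded, its dual $E_T'\colon L^2(D)\to\calX'$ has the same operator norm, whence
\[
  \|E_T'E_T(u^\bsy-\widehat{u})\|_{\calX'} \,\le\, \|E_T\|_{\calX\to L^2(D)}^2\,\|u^\bsy-\widehat{u}\|_\calX.
\]

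Combining these yields $\|f_{\rm dual}\|_{\calX'}\le(\alpha_1+\alpha_2\|E_T\|_{\calX\to L^2(D)}^2)\|u^\bsy-\widehat{u}\|_\calX$. The final ingredient is the triangle inequality together with the a priori bound \eqref{eq:u-apriori}, giving $\|u^\bsy-\widehat{u}\|_\calX\le\|u^\bsy\|_\calX+\|\widehat{u}\|_\calX\le\|f\|_{\calY'}/\beta_1+\|\widehat{u}\|_\calX$. Plugging back into the initial estimate produces exactly the asserted bound. The only subtle point — and the one most likely to trip up a careless derivation — is the identification of the duality pairings so that $R_V$ and $E_T'$ can be controlled through the embedding $\calX\hookrightarrow L^2(V;I)$; beyond that, everything is a clean chain of operator-norm inequalities.
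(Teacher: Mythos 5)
Your proposal is correct and follows essentially the same route as the paper: bound $\|q^\bsy\|_\calY$ by $\tfrac{1}{\beta_1}\|f_{\rm dual}\|_{\calX'}$ via the dual operator's bounded invertibility, estimate the $R_V$ and $E_T'E_T$ contributions separately (your explicit duality pairing for the $R_V$ term is just a spelled-out version of the paper's chain $\|R_V(u^\bsy-\widehat u)\|_{\calX'}\le\|R_V(u^\bsy-\widehat u)\|_{L^2(V';I)}=\|u^\bsy-\widehat u\|_{L^2(V;I)}\le\|u^\bsy-\widehat u\|_\calX$), and finish with the triangle inequality and the a priori bound \eqref{eq:u-apriori}. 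No gaps.
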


\begin{proof}
By the bounded invertibility of $B^\bsy$ and its dual operator, we have
\begin{align*}
 \|q^\bsy\|_\calY
 &\le \|((B^\bsy)')^{-1}\|_{\calX'\to \calY}\,
 \|(\alpha_1 R_V + \alpha_2 E_T'E_T)(u^\bsy - \widehat{u})\|_{\calX'}
\end{align*}
with $\|((B^\bsy)')^{-1}\|_{\calX'\to \calY} \le 1/\beta_1$,
\begin{align*}
 \|R_V(u^\bsy-\widehat{u})\|_{\calX'}
 &\le \|R_V(u^\bsy-\widehat{u})\|_{L^2(V';I)}  = \|u^\bsy-\widehat{u}\|_{L^2(V;I)} \le \|u^\bsy-\widehat{u}\|_{\calX},
 \\
 \|E_T'E_T(u^\bsy - \widehat{u})\|_{\calX'}
 & \leq \|E_T\|^{2}_{\calX\to L^2(D)}\, \|u^\bsy - \widehat{u}\|_{\calX},
 \\
 \|u^\bsy - \widehat{u}\|_{\calX}
 &\le \|u^\bsy\|_\calX + \|\widehat{u}\|_\calX
 \le \frac{\|f\|_{\calY'}}{\beta_1} + \|\widehat{u}\|_\calX,
\end{align*}
where we used \eqref{eq:u-apriori}. Combining the estimates gives the
desired result.  
\end{proof}

\begin{theorem}\label{thm:adjregularity}
Let $\alpha_1,\alpha_2 \geq 0$ and $\alpha_3 > 0$, with $\alpha_1 +
\alpha_2 > 0$. Let $f = (z,u_0) \in \calY'$ and $\widehat{u} \in
\calX$. For every $\bsy \in U$, let $u^\bsy \in \calX$ be the solution of
\eqref{eq:model} and then let $q^\bsy \in \calY$ be the solution of
\eqref{eq:weakdual} with $f_{\rm dual}$ given by \eqref{eq:f-dual2}. Then
for every $\bsnu\in\mathscr{F}$ we have
\begin{align*}
  \|\partial^\bsnu_\bsy q^\bsy\|_{\calY}
  \le \frac{\alpha_1 + \alpha_2\,\|E_T\|^{2}_{\calX\to L^2(D)}}{\beta_1}
  \Big(\frac{\|f\|_{\calY'}}{\beta_1} + \|\widehat{u}\|_\calX\Big)\,
  (|\bsnu| + 1)!\,\bsb^\bsnu,
\end{align*}
where $\beta_1$ is described in \eqref{eq:beta} and the sequence $\bsb =
(b_j)_{j\ge 1}$ is defined in \eqref{eq:bj}.
\end{theorem}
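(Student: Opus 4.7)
The plan is to prove the bound by induction on $|\bsnu|$, exploiting the affine structure of $B^\bsy$ in $\bsy$. Apply $\partial^\bsnu_\bsy$ to the dual operator equation $(B^\bsy)' q^\bsy = (\alpha_1 R_V + \alpha_2 E_T'E_T)(u^\bsy - \widehat u)$ via the Leibniz rule. Because $a^\bsy$ depends affinely on $\bsy$, the dual operator $(B^\bsy)'$ inherits this affine dependence: writing $B_j$ for the operator obtained from $B^\bsy$ by replacing $a^\bsy$ with $\psi_j$ (and zeroing the time derivative and initial-trace parts), one has $\partial_{y_j} (B^\bsy)' = (B_j)'$ and $\partial^\bsm_\bsy (B^\bsy)' = 0$ for $|\bsm|\ge 2$. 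Rearranging the Leibniz expansion yields
\begin{align*}
  (B^\bsy)'\,\partial^\bsnu_\bsy q^\bsy
  \,=\, (\alpha_1 R_V + \alpha_2 E_T'E_T)\,\partial^\bsnu_\bsy u^\bsy
  - \sum_{j\in\supp(\bsnu)} \nu_j\,(B_j)'\,\partial^{\bsnu-\bse_j}_\bsy q^\bsy.
\end{align*}

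Applying $\|((B^\bsy)')^{-1}\|_{\calX'\to\calY}\le 1/\beta_1$ and the same estimates used in the proof of Lemma~\ref{lem:q-apriori} (namely $\|R_V v\|_{\calX'}\le\|v\|_\calX$ and $\|E_T'E_T v\|_{\calX'}\le\|E_T\|^2_{\calX\to L^2(D)}\|v\|_\calX$), combined with $\|(B_j)'\|_{\calY\to\calX'}=\|B_j\|_{\calX\to\calY'}\le \sup_{t\in I}\|\psi_j(\cdot,t)\|_{L^\infty(D)}=\beta_1 b_j$, gives
\begin{align*}
  \|\partial^\bsnu_\bsy q^\bsy\|_{\calY}
  \le \tfrac{\alpha_1+\alpha_2\|E_T\|^2_{\calX\to L^2(D)}}{\beta_1}\,\|\partial^\bsnu_\bsy u^\bsy\|_\calX
  + \sum_{j\in\supp(\bsnu)} \nu_j\, b_j\,\|\partial^{\bsnu-\bse_j}_\bsy q^\bsy\|_\calY.
\end{align*}

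Now run induction. Write $C_0:=\tfrac{\alpha_1+\alpha_2\|E_T\|^2_{\calX\to L^2(D)}}{\beta_1}\bigl(\tfrac{\|f\|_{\calY'}}{\beta_1}+\|\widehat u\|_\calX\bigr)$. The base case $\bsnu=\bszero$ is precisely Lemma~\ref{lem:q-apriori}, which yields $\|q^\bsy\|_\calY\le C_0 = C_0\cdot(0+1)!$. For the inductive step with $|\bsnu|\ge 1$, use Lemma~\ref{lem:statebound} to bound $\|\partial^\bsnu_\bsy u^\bsy\|_\calX\le \tfrac{\|f\|_{\calY'}}{\beta_1}|\bsnu|!\,\bsb^\bsnu\le\bigl(\tfrac{\|f\|_{\calY'}}{\beta_1}+\|\widehat u\|_\calX\bigr)|\bsnu|!\,\bsb^\bsnu$, so the first term is bounded by $C_0\,|\bsnu|!\,\bsb^\bsnu$. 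For the sum, apply the inductive hypothesis $\|\partial^{\bsnu-\bse_j}_\bsy q^\bsy\|_\calY\le C_0\,|\bsnu|!\,\bsb^{\bsnu-\bse_j}$ (since $|\bsnu-\bse_j|+1=|\bsnu|$) to get
\begin{align*}
  \sum_{j\in\supp(\bsnu)} \nu_j\, b_j\, C_0\,|\bsnu|!\,\bsb^{\bsnu-\bse_j}
  \,=\, C_0\,|\bsnu|!\,\bsb^\bsnu\sum_{j\in\supp(\bsnu)}\nu_j
  \,=\, C_0\,|\bsnu|\,|\bsnu|!\,\bsb^\bsnu.
\end{align*}
Adding the two contributions gives $C_0\,(1+|\bsnu|)\,|\bsnu|!\,\bsb^\bsnu = C_0\,(|\bsnu|+1)!\,\bsb^\bsnu$, closing the induction.

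The only mildly delicate point is the organization of the Leibniz expansion: one must verify that the affine dependence of $B^\bsy$ on $\bsy$ produces exactly the shifted recursion above (with the factor $\nu_j$ arising as $\binom{\bsnu}{\bse_j}$) and that, crucially, the norm of the non-affine scalar operator $(\alpha_1 R_V+\alpha_2 E_T'E_T):\calX\to\calX'$ absorbs cleanly into the constant $C_0$ already identified in Lemma~\ref{lem:q-apriori}. Everything else is routine once the correct recursion is in place.
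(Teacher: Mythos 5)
Your proposal is correct and follows essentially the same route as the paper: differentiate the dual operator equation $(B^\bsy)'q^\bsy=(\alpha_1 R_V+\alpha_2 E_T'E_T)(u^\bsy-\widehat u)$ by the Leibniz rule, use the affine dependence of $a^\bsy$ on $\bsy$ so that only the $\bse_j$-derivatives of $(B^\bsy)'$ survive (with norm $\le\beta_1 b_j$), and combine $\|((B^\bsy)')^{-1}\|_{\calX'\to\calY}\le 1/\beta_1$ with the estimates from Lemma~\ref{lem:q-apriori} and Lemma~\ref{lem:statebound} to arrive at exactly the recursion the paper obtains. The only difference is in how the recursion is closed: the paper invokes the generic recursion lemma \cite[Lemma~9.1]{kuonuyenssurvey} together with the identity \cite[Formula~(9.4)]{kuonuyenssurvey}, whereas you run a direct induction on $|\bsnu|$, using $\sum_{j\in\supp(\bsnu)}\nu_j=|\bsnu|$ to get the factor $(|\bsnu|+1)$; this is more self-contained and reproduces the same constant, so it is a perfectly valid (and slightly leaner) finish.
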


\begin{proof}
For $\bsnu = \bszero$ the assertion follows from the previous lemma. For
$\bsnu \neq \bszero$ we take derivatives $\partial_\bsy^\bsnu ((B^\bsy)'
q^\bsy) = \partial_\bsy^\bsnu ((\alpha_1 R_V + \alpha_2 E_T'E_T)(u^\bsy -
\widehat{u}))$ and use the Leibniz product rule to get
\begin{align*}
  \sum_{\bsm \leq \bsnu} \binom{\bsnu}{\bsm}
  \big(\partial_\bsy^{\bsm} (B^\bsy)'\big) \big(\partial_\bsy^{\bsnu-\bsm} q^\bsy\big)
  = (\alpha_1 R_V + \alpha_2 E_T'E_T)\big(\partial_\bsy^\bsnu (u^\bsy - \widehat{u})\big)\,.
\end{align*}
Separating out the $\bsm = \bszero$ term, we obtain
\begin{align*}
  &(B^\bsy)' (\partial_\bsy^\bsnu q^\bsy) \\
  &= - \sum_{\bszero\ne\bsm \leq \bsnu} \binom{\bsnu}{\bsm}
  \big(\partial_\bsy^{\bsm} (B^\bsy)'\big) \big(\partial_\bsy^{\bsnu-\bsm} q^\bsy\big)
  + (\alpha_1 R_V + \alpha_2 E_T'E_T)\big(\partial_\bsy^\bsnu (u^\bsy - \widehat{u})\big).
\end{align*}
By the bounded invertibility of $(B^\bsy)'$, we have
$\|((B^\bsy)')^{-1}\|_{\calX'\to \calY} \leq \tfrac{1}{\beta_1}$ and
\begin{align*}
  \|\partial_\bsy^\bsnu q^\bsy\|_{\calY}
  &\leq \sum_{\bszero\ne\bsm \leq \bsnu} \binom{\bsnu}{\bsm}
  \|((B^\bsy)')^{-1} \partial_\bsy^{\bsm} (B^\bsy)' \|_{\calY\to \calY}\,
  \|\partial_\bsy^{\bsnu-\bsm} q^\bsy\|_{\calY}\\
  &\qquad + \|((B^\bsy)')^{-1}\|_{\calX' \to \calY}\,
  \|(\alpha_1 R_V + \alpha_2 E_T'E_T)(\partial_\bsy^\bsnu (u^\bsy - \widehat{u}))\|_{\calX'} \\
  &\leq \sum_{\bszero\ne\bsm \leq \bsnu} \binom{\bsnu}{\bsm} \frac{1}{\beta_1}
  \|\partial_\bsy^{\bsm} (B^\bsy)' \|_{\calY\to \calX'}\, \|\partial_\bsy^{\bsnu-\bsm} q^\bsy\|_{\calY}  \\
  &\qquad+ \frac{\alpha_1 + \alpha_2\,\|E_T\|^{2}_{\calX\to L^2(D)}}{\beta_1}
  \|\partial_\bsy^\bsnu (u^\bsy-\widehat{u})\|_{\calX}.
\end{align*}
Recall that
\begin{align*}
 &\langle v, (B^\bsy)' w\rangle_{\calX,\calX'} \\
 &= \int_I \langle v, -\tfrac{\partial}{\partial t} w\rangle_{V,V'}\,\mathrm dt + \int_I \int_D a^\bsy\, \nabla v \cdot \nabla w\, \mathrm dx\, \mathrm dt
  + \int_D E_Tw\, E_Tv\, \rd x.
\end{align*}
For $\bsm\ne\bszero$, we conclude with \eqref{eq:axy} that $\langle
v,\partial^{\bsm} (B^\bsy)' w\rangle_{\calX,\calX'} = \int_I \int_D
\psi_j\, \nabla v \cdot \nabla w \,\rd x \,\rd t$ if
$\bsm=\boldsymbol{e}_j$, and otherwise it is zero. Hence for $\bsm
=\bse_j$ we obtain for all $v\in \calY$ that
\begin{align*}
\|\partial^{\bsm}(B^\bsy)' v\|_{\calX'}
&= \sup_{w\in\calX} \frac{|\langle v,\partial^{\bsm} (B^\bsy)' w\rangle_{\calX,\calX'}|}{\|w\|_\calX}
= \sup_{w\in\calX} \frac{|\int_I \int_D \psi_j\, \nabla v \cdot \nabla w \,\mathrm dx\, \mathrm dt |}{\|w\|_\calX}\\
 &\leq b_j\,\sup_{w\in\calX} \frac{\| v\|_{L^2(V;I)}\, \|w \|_{L^2(V;I)}}{\|w\|_\calX}
 \leq b_j \|v\|_{\calY}\,.
\end{align*}
Hence
\begin{align*}
 \|\partial_\bsy^\bsnu q^\bsy\|_\calY \leq \!\!\!\sum_{j \in \supp(\bsnu)}\!\!\!
 \nu_j\, b_j\, \|\partial_\bsy^{\bsnu-\boldsymbol{e}_j}q^\bsy\|_\calY
 + \frac{\alpha_1 + \alpha_2\,\|E_T\|^{2}_{\calX\to L^2(D)}}{\beta_1}
 \|\partial_\bsy^\bsnu (u^\bsy-\widehat{u})\|_{\calX}.
\end{align*}
By Lemma~\ref{lem:q-apriori} this recursion is true for $\bsnu = \bszero$
and we may apply \cite[Lemma 9.1]{kuonuyenssurvey} to get
\begin{align*}
 \|\partial_\bsy^\bsnu q^\bsy\|_\calY \leq \sum_{\bsm \leq \bsnu} \binom{\bsnu}{\bsm}\,|\bsm|!\,\bsb^\bsm
 \Big(\frac{\alpha_1 + \alpha_2\,\|E_T\|^{2}_{\calX\to L^2(D)}}{\beta_1}
 \|\partial_\bsy^{\bsnu-\bsm} (u^\bsy-\widehat{u})\|_{\calX}\Big)\,.
\end{align*}

From \eqref{eq:u-apriori} and \eqref{eq:statebound} we have
$$
 \|\partial^{\bsnu}_\bsy(u^\bsy-\widehat{u})\|_{\calX}
 \le \begin{cases}
 \frac{1}{\beta_1}\|f\|_{\calY'} + \|\widehat{u}\|_{\calX} & \text{if}~\bsnu=\bszero, \vspace{0.1cm} \\
 \frac{1}{\beta_1}\|f\|_{\calY'}\,|\bsnu|!\,\bsb^{\bsnu} &\text{if}~\bsnu\ne\bszero.
  \end{cases}
$$
We finally arrive at
\begin{align*}
 \|\partial_\bsy^\bsnu q^\bsy\|_{\calY}
 &\leq \sum_{\satop{\bsm \leq \bsnu}{\bsm\ne\bsnu}} \binom{\bsnu}{\bsm}\,|\bsm|!\,\bsb^\bsm
 \frac{\alpha_1 + \alpha_2\,\|E_T\|^{2}_{\calX\to L^2(D)}}{\beta_1}\,
 \frac{\|f\|_{\calY'}}{\beta_1} \,|\bsnu-\bsm|!\,\bsb^{\bsnu-\bsm} \\
 &\qquad + |\bsnu|!\,\bsb^\bsnu\,\frac{\alpha_1 + \alpha_2\,\|E_T\|^{2}_{\calX\to L^2(D)}}{\beta_1}
 \Big(\frac{\|f\|_{\calY'}}{\beta_1} + \|\widehat{u}\|_{\calX}\Big) \\
 &= (|\bsnu| +1)!\, \bsb^\bsnu
 \frac{\alpha_1 + \alpha_2\,\|E_T\|^{2}_{\calX\to L^2(D)}}{\beta_1}\,
 \frac{\|f\|_{\calY'}}{\beta_1}\\
 &\qquad+ |\bsnu|!\, \bsb^\bsnu \frac{\alpha_1 + \alpha_2\,\|E_T\|^{2}_{\calX\to L^2(D)}}{\beta_1} \|\widehat{u}\|_{\calX} \\
 &\le (|\bsnu| +1)!\, \bsb^\bsnu
 \frac{\alpha_1 + \alpha_2\,\|E_T\|^{2}_{\calX\to L^2(D)}}{\beta_1}\,
 \Big(\frac{\|f\|_{\calY'}}{\beta_1} + \|\widehat{u}\|_{\calX}\Big),
\end{align*}
where the equality follows from \cite[Formula~(9.4)]{kuonuyenssurvey}.
 
\end{proof}

\section{Regularity analysis for the entropic risk measure}
\label{sec:reg-ent}

Our goal is to use QMC to approximate the following high-dimensional
integrals appearing in the denominator and numerator of the
gradient~\eqref{eq:gradient_entropic}. To this end, we develop regularity
bounds for the integrands.

\begin{lemma} \label{lem:Psibound}
Let $\theta>0$, $\alpha_1,\alpha_2 \geq 0$, with $\alpha_1 + \alpha_2 >
0$. Let $f = (z,u_0) \in \calY'$ and $\widehat{u} \in \calX$. For
every $\bsy \in U$, let $u^\bsy \in \calX$ be the solution of
\eqref{eq:model} and let $\Phi^\bsy$ be as in~\eqref{eq:Phi}. Then for all
$\bsnu\in\mathscr{F}$ we have
\begin{align*}
 |\partial^{\bsnu}_\bsy \Phi^\bsy|
 \le \frac{\alpha_1+\alpha_2\,\|E_T\|_{\calX\to L^2(D)}^2}{2}\,
 \bigg(\frac{\|f\|_{\calY'}}{\beta_1}+\|\widehat{u}\|_{\calX}\bigg)^2\,
 (|\bsnu|+1)!\,\bsb^{\bsnu},
\end{align*}
where the sequence $\bsb = (b_j)_{j\ge 1}$ is defined
by~\eqref{eq:bj}.
\end{lemma}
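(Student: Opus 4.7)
The plan is to expand $\Phi^\bsy$ as the sum of two squared norms, differentiate each via the Leibniz product rule (applied to the inner-product bilinear form), apply Cauchy--Schwarz termwise, and then reduce the two resulting sums to the $\calX$-norm using the continuous embedding $\calX \hookrightarrow L^2(V;I)$ and the operator norm of $E_T\!:\calX\to L^2(D)$. Writing $v^\bsy := u^\bsy - \widehat u$ and noting that $\partial_\bsy^\bsm v^\bsy = \partial_\bsy^\bsm u^\bsy$ whenever $\bsm\ne\bszero$ (since $\widehat u$ is independent of $\bsy$), this yields an upper bound of the form
\begin{align*}
 |\partial_\bsy^\bsnu \Phi^\bsy|
 \le \frac{\alpha_1 + \alpha_2\,\|E_T\|_{\calX\to L^2(D)}^2}{2}
 \sum_{\bsm\le\bsnu}\binom{\bsnu}{\bsm}\,
 \|\partial_\bsy^\bsm v^\bsy\|_\calX\,\|\partial_\bsy^{\bsnu-\bsm}v^\bsy\|_\calX.
\end{align*}

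Next I would unify the bounds on $\|\partial_\bsy^\bsm v^\bsy\|_\calX$. From Lemma~\ref{lem:statebound} we have $\|\partial_\bsy^\bsm v^\bsy\|_\calX \le \frac{\|f\|_{\calY'}}{\beta_1}|\bsm|!\,\bsb^\bsm$ when $\bsm\ne\bszero$, while the a priori estimate \eqref{eq:u-apriori} and the triangle inequality give $\|v^\bsy\|_\calX \le \frac{\|f\|_{\calY'}}{\beta_1} + \|\widehat u\|_\calX$. Setting $K := \frac{\|f\|_{\calY'}}{\beta_1} + \|\widehat u\|_\calX$, both cases are simultaneously captured by the uniform bound $\|\partial_\bsy^\bsm v^\bsy\|_\calX \le K\,|\bsm|!\,\bsb^\bsm$ valid for all $\bsm\in\mathscr{F}$ (including $\bsm=\bszero$, where $0!=1$ and $\bsb^\bszero=1$).

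Substituting this uniform bound into the Leibniz sum yields
\begin{align*}
 |\partial_\bsy^\bsnu \Phi^\bsy|
 \le \frac{\alpha_1 + \alpha_2\,\|E_T\|_{\calX\to L^2(D)}^2}{2}\,K^2\,\bsb^\bsnu
 \sum_{\bsm\le\bsnu}\binom{\bsnu}{\bsm}\,|\bsm|!\,|\bsnu-\bsm|!,
\end{align*}
and the combinatorial identity $\sum_{\bsm\le\bsnu}\binom{\bsnu}{\bsm}|\bsm|!\,|\bsnu-\bsm|! = (|\bsnu|+1)!$, which is \cite[Formula~(9.4)]{kuonuyenssurvey} (and was already used at the end of the proof of Theorem~\ref{thm:adjregularity}), delivers the claimed factor $(|\bsnu|+1)!\,\bsb^\bsnu$.

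The argument is essentially routine once the right uniform bound on $\|\partial_\bsy^\bsm v^\bsy\|_\calX$ is identified; the only mild subtlety is the case distinction at $\bsm=\bszero$, which is absorbed into the constant $K$ so that the $\bsnu=\bszero$ case reduces to Lemma~\ref{lemma:Phi} and the $\bsnu\ne\bszero$ case follows by the combinatorial identity above. No new regularity input beyond Lemma~\ref{lem:statebound} is required, since $\Phi^\bsy$ depends on $\bsy$ only through $u^\bsy$.
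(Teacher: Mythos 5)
Your proposal is correct and takes essentially the same route as the paper: differentiate under the integral sign, apply the Leibniz rule with Cauchy--Schwarz, invoke the state regularity bound of Lemma~\ref{lem:statebound} together with \eqref{eq:u-apriori}, and finish with the identity $\sum_{\bsm\le\bsnu}\binom{\bsnu}{\bsm}|\bsm|!\,|\bsnu-\bsm|!=(|\bsnu|+1)!$. The only (harmless) difference is bookkeeping: you absorb the $\bsm=\bszero$ and $\bsm=\bsnu$ terms into the uniform constant $K=\frac{\|f\|_{\calY'}}{\beta_1}+\|\widehat{u}\|_{\calX}$, whereas the paper separates these terms out and bounds the remaining sum via the Vandermonde convolution before recombining; both arguments yield exactly the stated bound.
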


\begin{proof}
The case $\bsnu=\bszero$ is precisely \eqref{eq:Phi-bound}. Consider now
$\bsnu\ne\bszero$. We estimate the partial derivatives of $\Phi^\bsy$ by
differentiating under the integral sign and using the Leibniz product rule
in conjunction with the Cauchy--Schwarz inequality to obtain
$$
 |\partial^{\bsnu}_\bsy \Phi^\bsy|\leq
 \frac{\alpha_1+\alpha_2\,\|E_T\|_{\calX\to L^2(D)}^2}{2}
 \sum_{\bsm\leq\bsnu}\binom{\bsnu}{\bsm}\|\partial^{\bsm}(u^\bsy-\widehat{u})\|_{\calX}\,
 \|\partial^{\bsnu-\bsm}(u^\bsy-\widehat{u})\|_{\calX}.
$$
Separating out the $\bsm=\bszero$ and $\bsm=\bsnu$ terms and
utilizing~\eqref{eq:statebound}, we obtain
\begin{align*}
&\sum_{\bsm\leq\bsnu}\binom{\bsnu}{\bsm}\, \|\partial^{\bsm}(u^\bsy-\widehat{u})\|_{\calX}\,
 \|\partial^{\bsnu-\bsm}(u^\bsy-\widehat{u})\|_{\calX}\\
&= 2\,\|u^\bsy-\widehat{u}\|_{\calX}\, \|\partial^{\bsnu}u^\bsy\|_{\calX}
 +\sum_{\substack{\bsm\leq \bsnu\\ \bszero\neq \bsm\neq \bsnu}}
 \binom{\bsnu}{\bsm}\, \|\partial^{\bsm}u^\bsy\|_{\calX}\, \|\partial^{\bsnu-\bsm}u^\bsy\|_{\calX}\\
&\le 2\,\bigg(\frac{\|f\|_{\calY'}}{\beta_1}+\|\widehat{u}\|_{\calX}\bigg) \frac{\|f\|_{\calY'}}{\beta_1}
     |\bsnu|!\,\bsb^{\bsnu}
    + \bigg(\frac{\|f\|_{\calY'}}{\beta_1}\bigg)^2 \bsb^{\bsnu} \!\!
    \sum_{\substack{\bsm\leq \bsnu\\ \bszero\neq \bsm\neq \bsnu}}\!\binom{\bsnu}{\bsm}\,|\bsm|!\,|\bsnu-\bsm|!,
\end{align*}
where the sum over $\bsm$ can be rewritten as
\begin{align*}
 \sum_{\ell=1}^{|\bsnu|-1}\ell!\,(|\bsnu|-\ell)!\sum_{\bsm\le\bsnu,\,|\bsm|=\ell} \binom{\bsnu}{\bsm}
 = \sum_{\ell=1}^{|\bsnu|-1}\ell!\,(|\bsnu|-\ell)!\,\binom{|\bsnu|}{\ell}
 = |\bsnu|!\,(|\bsnu|-1),
\end{align*}
where we used the identity
\begin{align} \label{eq:vander}
 \sum_{\bsm\le\bsnu,\,|\bsm|=\ell} \binom{\bsnu}{\bsm}
 = \binom{|\bsnu|}{\ell} = \frac{|\bsnu|!}{(|\bsnu|-\ell)!\,\ell!},
\end{align}
which is a simple consequence of the Vandermonde
convolution~\cite[Equation~(5.1)]{gouldbook}. Combining the estimates
yields the required result.  
\end{proof}

For future reference, we state a recursive form of Fa\`a di Bruno's
formula~\cite{savits} for the exponential function.

\begin{theorem} \label{thm:faadibruno}
Let $G : U\to \bbR$. For all $\bsy\in U$ and
$\bsnu\in\mathscr{F}\setminus\{\bszero\}$, we have
\begin{align*} 
 \partial^{\bsnu}_\bsy \exp(G(\bsy)) = \exp(G(\bsy))
 \sum_{\lambda=1}^{|\bsnu|} \alpha_{\bsnu,\lambda}(\bsy),
\end{align*}
where the sequence $(\alpha_{\bsnu,\lambda}(\bsy))_{\bsnu\in\mathscr
F,\lambda\in \mathbb N_0}$ is defined recursively by $\alpha_{\bsnu,
0}(\bsy)=\delta_{\bsnu,\bszero}$, $\alpha_{\bsnu,\lambda}(\bsy)=0$ for
$\lambda>|\bsnu|$, and otherwise
\begin{align*}
 \alpha_{\bsnu+\boldsymbol e_j,\lambda}(\bsy)
  = \sum_{\bsm\le\bsnu}\binom{\bsnu}{\bsm}\, (\partial^{\bsnu-\bsm+\boldsymbol e_j}G)(\bsy)\,
  \alpha_{\bsm,\lambda-1}(\bsy), \qquad j\ge 1.
\end{align*}
\end{theorem}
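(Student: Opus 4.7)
The plan is to prove the formula by induction on $|\bsnu|$, using the first-order chain rule identity $\partial_{y_j}\exp(G(\bsy)) = (\partial_{y_j}G(\bsy))\,\exp(G(\bsy))$ as the engine, and then differentiating further with the Leibniz product rule. The statement is confined to $\bsnu\ne\bszero$, but it is natural to use the formula ``vacuously'' at $\bsnu=\bszero$, i.e.~$\partial^{\bszero}_\bsy\exp(G) = \exp(G) = \exp(G)\,\alpha_{\bszero,0}$, as a convenient base case inside the Leibniz expansion.

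For the base case $|\bsnu|=1$, writing $\bsnu=\bse_j$, the chain rule directly gives $\partial^{\bse_j}_\bsy\exp(G) = (\partial^{\bse_j}G)\,\exp(G)$. Evaluating the recursion with index $\bszero$, $\alpha_{\bse_j,1}(\bsy) = \sum_{\bsm\le\bszero}\binom{\bszero}{\bsm}(\partial^{\bse_j}G)(\bsy)\,\alpha_{\bsm,0}(\bsy) = (\partial^{\bse_j}G)(\bsy)\,\alpha_{\bszero,0}(\bsy) = (\partial^{\bse_j}G)(\bsy)$, which matches the claim since the $\lambda$-sum has only one term.

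For the inductive step, assuming the formula for every multi-index of order at most $|\bsnu|$, I would consider $\bsnu+\bse_j$ and write
$$
\partial^{\bsnu+\bse_j}_\bsy\exp(G) \,=\, \partial^{\bsnu}_\bsy\bigl((\partial_{y_j}G)\exp(G)\bigr) \,=\, \sum_{\bsm\le\bsnu}\binom{\bsnu}{\bsm}(\partial^{\bsnu-\bsm+\bse_j}G)\,\partial^{\bsm}_\bsy\exp(G),
$$
via Leibniz. The $\bsm=\bszero$ term evaluates directly to $(\partial^{\bsnu+\bse_j}G)\exp(G)$, and for each $\bsm\ne\bszero$ (all with $|\bsm|\le|\bsnu|$) the induction hypothesis substitutes $\partial^\bsm_\bsy\exp(G) = \exp(G)\sum_{\lambda=1}^{|\bsm|}\alpha_{\bsm,\lambda}$. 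On the right-hand side of the target identity, I would unfold $\alpha_{\bsnu+\bse_j,\lambda}$ using the recursion, interchange the $\lambda$- and $\bsm$-summations (legal because $\alpha_{\bsm,\lambda}=0$ for $\lambda>|\bsm|$), and separate the $\bsm=\bszero$ contribution using $\alpha_{\bszero,0}=1$ together with $\alpha_{\bsm,0}=\delta_{\bsm,\bszero}$. Both sides then collapse to the same finite expression, closing the induction.

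The main obstacle is the indexing bookkeeping when swapping the $\lambda$- and $\bsm$-sums and reconciling ranges: one must verify that the range $1\le\lambda\le|\bsnu|+1$ in the claimed RHS becomes, after re-indexing $\lambda'=\lambda-1$, the range $0\le\lambda'\le|\bsnu|$, which then truncates to $1\le\lambda'\le|\bsm|$ for $\bsm\ne\bszero$ by the support property of $\alpha_{\bsm,\cdot}$, while contributing only the $\lambda'=0$ term for $\bsm=\bszero$. No convergence issue arises, since $\bsnu$ has finite support and all sums are finite. Alternatively one could cite the classical formulation of Fa\`a di Bruno's formula for the exponential composition (as in~\cite{savits}), but the direct inductive argument above is self-contained and keeps within the recursive definition actually used later in the paper.
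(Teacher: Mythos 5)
Your proof is correct, but it takes a genuinely different route from the paper: the paper disposes of this theorem in one line, by observing that it is the special case of Savits' general recursive Fa\`a di Bruno formula (his Formulas (3.1) and (3.5)) in which the outer function is the exponential and $m=1$, so that the multi-index $\lambda$ there reduces to an integer. You instead give a direct, self-contained induction on $|\bsnu|$: the chain rule supplies the base case $\bsnu=\bse_j$, and in the inductive step you write $\partial^{\bsnu+\bse_j}\exp(G)=\partial^{\bsnu}\bigl((\partial^{\bse_j}G)\exp(G)\bigr)$, expand by Leibniz, insert the hypothesis $\partial^{\bsm}\exp(G)=\exp(G)\sum_{\mu}\alpha_{\bsm,\mu}$ (with the $\bsm=\bszero$ convention $\alpha_{\bszero,0}=1$), and use the support properties $\alpha_{\bsm,0}=\delta_{\bsm,\bszero}$ and $\alpha_{\bsm,\mu}=0$ for $\mu>|\bsm|$ to pad the inner sum to a common range, swap the finite sums, and recognize the recursion for $\alpha_{\bsnu+\bse_j,\mu+1}$; after the shift $\lambda=\mu+1$ this is exactly the claimed identity at $\bsnu+\bse_j$. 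I checked this bookkeeping and it closes correctly, and finiteness of all sums is clear since $\bsnu$ has finite support. What the two approaches buy: the paper's citation is shortest and inherits from Savits the well-definedness of the coefficients $\alpha_{\bsnu,\lambda}$ (independence of the order in which unit vectors are added) together with a far more general statement, at the cost of asking the reader to verify the specialization; your argument is elementary, keeps entirely within the recursion actually used later in Lemma~\ref{lem:recur} and Theorem~\ref{thm:expbound}, and proves the identity for the coefficients produced by any fixed way of building up $\bsnu$ by unit increments, which is all the subsequent estimates require.
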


\begin{proof}
This is a special case of \cite[Formulas~(3.1) and (3.5)]{savits} in which
$f$ is the exponential function and $m=1$ so that $\lambda$ is an integer.
 
\end{proof}

\begin{lemma} \label{lem:recur}
Let the sequence
$(\bbA_{\bsnu,\lambda})_{\bsnu\in\mathscr{F},\,\lambda\in\bbN_0}$ satisfy
$\bbA_{\bsnu,0}=\delta_{\bsnu,\bszero}$, $\bbA_{\bsnu,\lambda}=0$ for
$\lambda>|\bsnu|$, and otherwise satisfy the recursion
\begin{align} \label{eq:recur}
  \bbA_{\bsnu+\boldsymbol e_j,\lambda}
  \le \sum_{\bsm\le\bsnu}\binom{\bsnu}{\bsm}\, c\, \bsrho^{\bsnu-\bsm+\bse_j}\,(|\bsnu|-|\bsm|+2)!\,
   \bbA_{\bsm,\lambda-1}, \qquad j\ge 1,
\end{align}
for some $c>0$ and a nonnegative sequence $\bsrho$. Then for all
$\bsnu\ne\bszero$ and $1\leq \lambda\leq |\bsnu|$ we have
\begin{align} \label{eq:goal}
  \bbA_{\bsnu,\lambda} \,\le\, c^\lambda\, \bsrho^\bsnu
  \sum_{k=1}^\lambda \frac{(-1)^{\lambda+k}\,(|\bsnu|+2k-1)!}{(2k-1)!\,(\lambda-k)!\,k!}.
\end{align}
The result is sharp in the sense that both inequalities can be replaced by
equalities.
\end{lemma}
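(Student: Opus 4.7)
The plan is to reduce the problem to a scalar recursion. Because $(n-|\bsm|+2)!$ in the recursion~\eqref{eq:recur} depends on $\bsm$ only through $|\bsm|$ and the product $\bsrho^{\bsnu-\bsm+\bse_j}\bsrho^{\bsm}=\bsrho^{\bsnu+\bse_j}$ separates cleanly, I would introduce a scalar sequence $b_{n,\lambda}$ defined by $b_{n,0}=\delta_{n,0}$, $b_{n,\lambda}=0$ for $\lambda>n$, and
\begin{equation*}
b_{n+1,\lambda} \,=\, \sum_{\ell=0}^{n}\binom{n}{\ell}(n-\ell+2)!\,b_{\ell,\lambda-1},\qquad 1\le\lambda\le n+1,
\end{equation*}
and aim to prove the intermediate bound $\bbA_{\bsnu,\lambda}\le c^\lambda\bsrho^\bsnu\,b_{|\bsnu|,\lambda}$ by strong induction on $|\bsnu|$. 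The base case $|\bsnu|=1$ is direct from \eqref{eq:recur}. For the inductive step, writing $\bsnu=\bsnu'+\bse_j$ with $|\bsnu'|=n$, I would plug the induction hypothesis into \eqref{eq:recur}, factor out $c^\lambda\bsrho^{\bsnu'+\bse_j}$, and collapse the multi-index sum by invoking the Vandermonde identity $\sum_{\bsm\le\bsnu',\,|\bsm|=\ell}\binom{\bsnu'}{\bsm}=\binom{n}{\ell}$ from \eqref{eq:vander} (already used in the proof of Lemma~\ref{lem:Psibound}), thereby reproducing exactly the scalar recursion defining $b_{n+1,\lambda}$.

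The main obstacle is to identify the closed form of $b_{n,\lambda}$. My plan is a generating-function argument. Define $F(x,\lambda):=\sum_{n\ge 0} b_{n,\lambda}\,x^n/n!$. Using the elementary identity $\sum_{m\ge 0}(m+1)(m+2)\,x^m=2/(1-x)^3$, a short Cauchy-product computation turns the scalar recursion into the first-order ODE
\begin{equation*}
\frac{\partial F(x,\lambda)}{\partial x} \,=\, \frac{2}{(1-x)^3}\,F(x,\lambda-1),\qquad F(x,0)=1,\qquad F(0,\lambda)=0\;\text{ for }\lambda\ge 1.
\end{equation*}
The key insight is that this family admits the remarkably compact closed form
\begin{equation*}
F(x,\lambda) \,=\, \frac{1}{\lambda!}\biggl[\frac{1}{(1-x)^2}-1\biggr]^{\lambda},
\end{equation*}
as one verifies by direct differentiation, and because $(1-x)^{-2}-1$ vanishes at $x=0$.

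Finally, I would extract coefficients by the binomial theorem together with $(1-x)^{-2k}=\sum_{n\ge 0}\binom{n+2k-1}{2k-1}x^n$ for $k\ge 1$ (the $k=0$ term contributes only at $n=0$), yielding
\begin{equation*}
b_{n,\lambda} \,=\, n!\,[x^n]F(x,\lambda) \,=\, \sum_{k=1}^{\lambda}\frac{(-1)^{\lambda-k}\,(n+2k-1)!}{(2k-1)!\,(\lambda-k)!\,k!}\qquad\text{for }n\ge 1,
\end{equation*}
which, after the trivial simplification $(-1)^{\lambda-k}=(-1)^{\lambda+k}$, matches precisely the right-hand side of \eqref{eq:goal}. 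The sharpness statement is then immediate: every step in the induction preserves equality whenever \eqref{eq:recur} holds with equality, so the same generating-function calculation produces the matching lower bound.
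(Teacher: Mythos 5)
Your proposal is correct, but it reaches the closed form by a genuinely different route than the paper. Both arguments share the first move: an induction on $|\bsnu|$ in which the multi-index sum in \eqref{eq:recur} is collapsed to a sum over orders via the Vandermonde identity \eqref{eq:vander}, since the coefficients depend on $\bsm$ only through $|\bsm|$ and $\bsrho^{\bsnu-\bsm+\bse_j}\bsrho^{\bsm}=\bsrho^{\bsnu+\bse_j}$ factors out. The paper, however, never isolates a scalar sequence: it plugs the claimed bound \eqref{eq:goal} directly into the recursion and verifies the inductive step by hand, which requires the binomial convolution identity $\sum_{\ell=0}^{|\bsnu|}\binom{|\bsnu|-\ell+2}{|\bsnu|-\ell}\binom{\ell+2k-1}{\ell}=\binom{|\bsnu|+2k+2}{|\bsnu|}$ (Gould, Eq.~(5.6)) together with an alternating-sum manipulation to reassemble the answer. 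You instead extract the sharp scalar recursion $b_{n+1,\lambda}=\sum_{\ell=0}^{n}\binom{n}{\ell}(n-\ell+2)!\,b_{\ell,\lambda-1}$, prove $\bbA_{\bsnu,\lambda}\le c^{\lambda}\bsrho^{\bsnu}b_{|\bsnu|,\lambda}$ (legitimate, since the coefficients multiplying $\bbA_{\bsm,\lambda-1}$ are nonnegative), and then \emph{derive} the closed form of $b_{n,\lambda}$ from the exponential generating function: the recursion becomes $\partial_x F(x,\lambda)=\tfrac{2}{(1-x)^3}F(x,\lambda-1)$ with $F(x,0)=1$, $F(0,\lambda)=0$, whose unique solution is $F(x,\lambda)=\tfrac{1}{\lambda!}\bigl[(1-x)^{-2}-1\bigr]^{\lambda}$; coefficient extraction via $(1-x)^{-2k}=\sum_{n\ge0}\binom{n+2k-1}{2k-1}x^{n}$ reproduces \eqref{eq:goal} exactly, and the sharpness claim follows because every step is an equality when \eqref{eq:recur} is. Your details check out (the extension of the recursion to $\lambda>n+1$ is consistent with $b_{n,\lambda}=0$, and the ODE determines $F$ uniquely), so the argument is complete. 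What your approach buys is that it explains where the formula comes from rather than verifying a guessed expression, avoids the table identity from Gould's book, and yields the compact generating function $\tfrac{1}{\lambda!}[(1-x)^{-2}-1]^{\lambda}$ — which is also directly relevant to the paper's remark about the OEIS sequence A181289 and the search for alternative closed forms; what the paper's verification buys is a short, fully elementary induction that never leaves the discrete setting.
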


\begin{proof}
We prove \eqref{eq:goal} for all $\bsnu\ne\bszero$ and $1\leq
\lambda\leq |\bsnu|$ by induction on $|\bsnu|$. The base case
$\bbA_{\bse_j,1}$ is easy to verify. Let $\bsnu\ne\bszero$ and suppose
that \eqref{eq:goal} holds for all multi-indices $\bsm$ of order $\le
|\bsnu|$ and all $1\leq \lambda\leq |\bsm|$. The case
$\bbA_{\bsnu+\bse_j,1}$ is also straightforward to verify. We consider
therefore $2\leq \lambda \leq |\bsnu|+1$. Using \eqref{eq:recur} and the
induction hypothesis, we have
\begin{align} \label{eq:T}
  &\bbA_{\bsnu+\boldsymbol e_j,\lambda} \nonumber \\
  &\le \sum_{\bszero\ne\bsm\le\bsnu}\binom{\bsnu}{\bsm}c\,
  \bsrho^{\bsnu-\bsm+\bse_j}(|\bsnu|-|\bsm|+2)!\, \nonumber \\
  &\qquad  \times \bigg(c^{\lambda-1}\,
  \bsrho^\bsm \sum_{k=1}^{\lambda-1} \frac{(-1)^{\lambda-1+k}\,(|\bsm|+2k-1)!}{(2k-1)!\,(\lambda-1-k)!\,k!}\bigg) \nonumber\\
  &= c^\lambda\, \bsrho^{\bsnu+\bse_j} \sum_{\ell=1}^{|\bsnu|} \sum_{\satop{\bsm\le\bsnu}{|\bsm|=\ell}} \binom{\bsnu}{\bsm}\,
  \sum_{k=1}^{\lambda-1} \frac{(-1)^{\lambda-1+k}\, (|\bsnu|-\ell+2)!\,(\ell+2k-1)!}{(2k-1)!\,(\lambda-1-k)!\,k!}
  \nonumber\\
  &= c^\lambda\, \bsrho^{\bsnu+\bse_j}\,
  \frac{2\,|\bsnu|!\,(-1)^{\lambda-1}}{(\lambda-1)!}
  \underbrace{\sum_{k=1}^{\lambda-1} (-1)^k\,\binom{\lambda-1}{k}
  \sum_{\ell=1}^{|\bsnu|} \binom{|\bsnu|-\ell+2}{|\bsnu|-\ell} \binom{\ell+2k-1}{\ell}}_{=:\,T},
\end{align}
where we used \eqref{eq:vander} and then regrouped the factors as
binomial coefficients. Next we take the binomial identity
\cite[Equation~(5.6)]{gouldbook}
\[
  \sum_{\ell=0}^{|\bsnu|} \binom{|\bsnu|-\ell+2}{|\bsnu|-\ell} \binom{\ell+2k-1}{\ell}
  = \binom{|\bsnu|+2k+2}{|\bsnu|},
\]
separate out the $\ell=0$ term, and use $\sum_{k=1}^{\lambda-1} (-1)^k
\binom{\lambda-1}{k} = \sum_{k=0}^{\lambda-1} (-1)^k \binom{\lambda-1}{k}
-1 = -1$, to rewrite $T$ as
\begin{align*}
  T &= \sum_{k=1}^{\lambda-1} (-1)^k\,\binom{\lambda-1}{k}
  \bigg[ \binom{|\bsnu|+2k+2}{|\bsnu|} - \binom{|\bsnu|+2}{|\bsnu|} \bigg] \\
 &= \sum_{k=1}^{\lambda-1} (-1)^k\,\binom{\lambda-1}{k}
  \binom{|\bsnu|+2k+2}{|\bsnu|} + \binom{|\bsnu|+2}{|\bsnu|} \\
 &= \sum_{k=0}^{\lambda-1} (-1)^k\,\binom{\lambda-1}{k}
  \binom{|\bsnu|+2k+2}{|\bsnu|}
 = \sum_{k=1}^{\lambda} (-1)^{k-1}\,\binom{\lambda-1}{k-1}
  \binom{|\bsnu|+2k}{|\bsnu|}.
\end{align*}
Substituting this back into \eqref{eq:T} and simplifying the factors, we
obtain
\[
  \bbA_{\bsnu+\boldsymbol e_j,\lambda}
  \le c^\lambda\, \bsrho^{\bsnu+\bse_j} \sum_{k=1}^{\lambda}
   \frac{(-1)^{\lambda+k}\,(|\bsnu|+2k)!}{(2k-1)!\,(\lambda-k)!\,k!},
\]
as required.  
\end{proof}

\begin{theorem} \label{thm:expbound}
Let $\theta>0$, $\alpha_1,\alpha_2 \geq 0$, with $\alpha_1 + \alpha_2 >
0$. Let $f = (z,u_0) \in \calY'$ and $\widehat{u} \in \calX$. For
every $\bsy \in U$, let $u^\bsy \in \calX$ be the solution of
\eqref{eq:model} and let $\Phi^\bsy$ be as in~\eqref{eq:Phi}. Then for all
$\bsnu\in\mathscr{F}$ we have
$$
 |\partial^{\bsnu}_\bsy \exp(\theta\,\Phi^\bsy)|
 \le e^{\max(\sigma,\,\sigma e^2+2\sigma-1)}\, |\bsnu|!\,(e\bsb)^{\bsnu},
$$%
where the sequence $\bsb = (b_j)_{j\ge 1}$ is defined by~\eqref{eq:bj} and
$\sigma$ is defined by \eqref{eq:sigma2}.
\end{theorem}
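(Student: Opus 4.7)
The plan is to compose three ingredients already available in the paper: Faà di Bruno's formula (Theorem~\ref{thm:faadibruno}) to expand the derivative of the exponential, Lemma~\ref{lem:Psibound} to bound the derivatives of the inner function $\theta\Phi^\bsy$, and Lemma~\ref{lem:recur} to turn the resulting recursion into an explicit combinatorial bound on the coefficients $\alpha_{\bsnu,\lambda}$. Concretely, with $G(\bsy):=\theta\,\Phi^\bsy$, Theorem~\ref{thm:faadibruno} gives, for $\bsnu\neq\bszero$,
\[
 \partial_\bsy^\bsnu\exp(\theta\,\Phi^\bsy) = \exp(\theta\,\Phi^\bsy)\sum_{\lambda=1}^{|\bsnu|}\alpha_{\bsnu,\lambda}(\bsy),
\]
and Lemma~\ref{lem:Psibound} multiplied by $\theta$ yields $|\partial_\bsy^{\bsnu-\bsm+\bse_j}(\theta\,\Phi^\bsy)|\le \sigma\,(|\bsnu|-|\bsm|+2)!\,\bsb^{\bsnu-\bsm+\bse_j}$ with the constant $\sigma$ of \eqref{eq:sigma2}. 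Substituting this into the defining recursion for $\alpha_{\bsnu,\lambda}$ shows that the nonnegative sequence $\bbA_{\bsnu,\lambda}:=|\alpha_{\bsnu,\lambda}(\bsy)|$ satisfies the hypothesis of Lemma~\ref{lem:recur} with $c=\sigma$ and $\bsrho=\bsb$, so
\[
 |\alpha_{\bsnu,\lambda}(\bsy)|\le \sigma^\lambda\,\bsb^\bsnu\sum_{k=1}^{\lambda}\frac{(-1)^{\lambda+k}(|\bsnu|+2k-1)!}{(2k-1)!(\lambda-k)!\,k!}, \qquad 1\le\lambda\le|\bsnu|.
\]

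Next I would sum over $\lambda$. Swapping the order of summation, setting $j=\lambda-k$ and writing $n:=|\bsnu|$ gives
\[
 \sum_{\lambda=1}^{n}|\alpha_{\bsnu,\lambda}(\bsy)| \le \bsb^\bsnu\sum_{k=1}^{n}\frac{(n+2k-1)!\,\sigma^k}{(2k-1)!\,k!}\sum_{j=0}^{n-k}\frac{(-\sigma)^j}{j!},
\]
and the inner alternating tail is bounded in absolute value by $e^\sigma$. The remaining combinatorial sum is then estimated by the (standard) chain
\[
 \frac{(n+2k-1)!}{(2k-1)!}=\prod_{i=0}^{n-1}(2k+i)\le(n+2k-1)^n\le n^n e^{2k-1},
\]
using the Bernoulli-type inequality $(1+x)^n\le e^{nx}$ for $x=(2k-1)/n$. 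Combined with $n^n\le e^n\,n!$ (from $n!\ge (n/e)^n$) this yields
\[
 \sum_{k=1}^{n}\frac{(n+2k-1)!\,\sigma^k}{(2k-1)!\,k!}\le n^n\,e^{-1}\sum_{k=1}^{n}\frac{(e^2\sigma)^k}{k!}\le n!\,e^n\,e^{e^2\sigma-1}.
\]

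Assembling the pieces and using $\exp(\theta\,\Phi^\bsy)\le e^\sigma$ from \eqref{eq:sigma1},
\[
 |\partial_\bsy^\bsnu\exp(\theta\,\Phi^\bsy)| \le e^\sigma\cdot e^\sigma\cdot n!\,e^n\,e^{e^2\sigma-1}\,\bsb^\bsnu = e^{\sigma e^2+2\sigma-1}\,|\bsnu|!\,(e\bsb)^\bsnu
\]
for $\bsnu\neq\bszero$, while the case $\bsnu=\bszero$ is just \eqref{eq:sigma1}, giving the prefactor $e^\sigma$. Taking the maximum of the two prefactors yields the stated bound.

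The main obstacle I expect is managing the combinatorial sum from Lemma~\ref{lem:recur}: the alternating sign in $(-1)^{\lambda+k}$ makes a termwise bound wasteful, which is why swapping the order of summation first (so that the alternating tail $\sum_j(-\sigma)^j/j!$ appears cleanly and can be absorbed into an $e^\sigma$ factor) is essential. After that step the problem reduces to bounding $\sum_k\binom{n+2k-1}{n}\sigma^k/k!$, and the particular form $e^{\sigma e^2+2\sigma-1}|\bsnu|!(e\bsb)^\bsnu$ in the statement is precisely what the inequality chain $(n+2k-1)^n\le n^n e^{2k-1}$ together with $n^n\le e^n n!$ delivers; one should not expect the constants in the exponent to come out cleanly from any looser estimate such as $\binom{n+2k-1}{n}\le 2^{n+2k-1}$.
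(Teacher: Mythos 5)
Your proposal is correct and follows essentially the same route as the paper's proof: Fa\`a di Bruno (Theorem~\ref{thm:faadibruno}) plus Lemma~\ref{lem:Psibound} to get the recursion with $c=\sigma$, $\bsrho=\bsb$, then Lemma~\ref{lem:recur}, the same swap of the $\lambda$- and $k$-sums with the alternating tail absorbed into $e^\sigma$, and the same final constant. The only difference is cosmetic: you bound $(|\bsnu|+2k-1)!/(2k-1)!\le |\bsnu|^{|\bsnu|}e^{2k-1}\le |\bsnu|!\,e^{|\bsnu|+2k-1}$, whereas the paper uses $\binom{|\bsnu|+2k-1}{2k-1}\le e^{|\bsnu|+2k-1}$ directly, yielding the identical bound $e^{\sigma e^2+2\sigma-1}|\bsnu|!\,(e\bsb)^\bsnu$.
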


\begin{proof}
For $\bsnu=\bszero$ we have from \eqref{eq:sigma1} that
$|\exp(\theta\,\Phi^\bsy)| \le e^\sigma$, which satisfies the required
bound. For $\bsnu\ne\bszero$, from Fa\`a di Bruno's formula
(Theorem~\ref{thm:faadibruno}) we have
\begin{align} \label{eq:step1}
 |\partial^{\bsnu}_\bsy \exp(\theta\Phi^\bsy)|
 \le \exp(\theta\,\Phi^\bsy) \sum_{\lambda=1}^{|\bsnu|} |\alpha_{\bsnu,\lambda}(\bsy)|,
\end{align}
with $\alpha_{\bsnu, 0}(\bsy)=\delta_{\bsnu,\bszero}$,
$\alpha_{\bsnu,\lambda}(\bsy)=0$ for $\lambda>|\bsnu|$, and
\begin{align*}
  |\alpha_{\bsnu+\bse_j,\lambda}(\bsy)|
  &\le \sum_{\bsm\le\bsnu}\binom{\bsnu}{\bsm}\, \theta\,|\partial^{\bsm+\bse_j}_\bsy \Phi^\bsy|\,
   |\alpha_{\bsnu-\bsm,\lambda-1}(\bsy)| \\
  &\le \sum_{\bsm\le\bsnu}\binom{\bsnu}{\bsm}\, \sigma \,(|\bsm|+2)!\,\bsb^{\bsm+\bse_j}\,
   |\alpha_{\bsnu-\bsm,\lambda-1}(\bsy)|,
\end{align*}
where we used Lemma~\ref{lem:Psibound}. Applying Lemma~\ref{lem:recur} we
conclude that
\begin{align} \label{eq:step2}
  |\alpha_{\bsnu,\lambda}(\bsy)| \le \sigma^\lambda\,\bsb^\bsnu
  \sum_{k=1}^\lambda \frac{(-1)^{\lambda+k}\,(|\bsnu|+2k-1)!}{(2k-1)!\,(\lambda-k)!\,k!}.
\end{align}
We have
\begin{align} \label{eq:step3}
 &\sum_{\lambda=1}^{|\bsnu|} \sigma^\lambda
 \sum_{k=1}^\lambda \frac{(-1)^{\lambda+k}\,(|\bsnu|+2k-1)!}{(2k-1)!\,(\lambda-k)!\,k!}
 = \sum_{k=1}^{|\bsnu|} \frac{(|\bsnu|+2k-1)!}{(2k-1)!\,k!}
 \sum_{\lambda=k}^{|\bsnu|} \frac{(-1)^{\lambda+k}\,\sigma^\lambda}{(\lambda-k)!} \nonumber\\
 &= |\bsnu|!\sum_{k=1}^{|\bsnu|} \frac{\sigma^k}{k!} \binom{|\bsnu|+2k-1}{2k-1}
 \sum_{\ell=0}^{|\bsnu|-k} \frac{(-\sigma)^\ell}{\ell!}
 \le |\bsnu|! \sum_{k=1}^{|\bsnu|} \frac{\sigma^k}{k!} e^{|\bsnu|+2k-1} e^\sigma\\
 &\le |\bsnu|!\, e^{|\bsnu|+\sigma e^2+\sigma-1}, \nonumber
\end{align}
where we used $\binom{n}{m} \le n^m/m! \le e^n$. Combining
\eqref{eq:step1}, \eqref{eq:step2}, \eqref{eq:step3} and
\eqref{eq:Phi-bound} gives
\[
|\partial^{\bsnu}_\bsy \exp(\theta\Phi^\bsy)|
 \le \exp(\sigma)\,\bsb^\bsnu\,|\bsnu|!\, e^{|\bsnu|+\sigma e^2+\sigma-1}
 = e^{\sigma e^2+2\sigma-1}\, |\bsnu|!\,(e\bsb)^\bsnu,
\]
as required.  
\end{proof}

\begin{remark}
In the proof of Theorem~\ref{thm:expbound}, a different manipulation of
\eqref{eq:step3} can yield a different bound $2c\, e^{|\bsnu|+\sigma
e^2+\sigma+1}(|\bsnu|-1)!$ for $\bsnu\ne\bszero$, leading to a tighter
upper bound for large $|\bsnu|$ at the expense of a bigger constant,
$$
 |\partial^{\bsnu}_\bsy \exp(\theta\,\Phi^\bsy)|
 \le 2\sigma\,{\rm e}^{\sigma e^2+2\sigma+1}\,(|\bsnu|-1)!\,(e\bsb)^{\bsnu}.
$$
This then leads to a more complicated bound for
Theorem~\ref{thm:expbound-q1} below. We have chosen to present the current
form of Theorem~\ref{thm:expbound} to simplify our subsequent analysis.

Interestingly, the sum in \eqref{eq:step2} can also be rewritten as a sum
with only positive terms: denoting $v = |\bsnu|$,
\begin{align*}
 \sum_{k=1}^\lambda\frac{(-1)^{\lambda+k}(v+2k-1)!}{(2k-1)!(\lambda-k)!k!}
 &= \frac{v!}{\lambda!}\sum_{k=0}^\lambda\binom{\lambda}{k}\binom{v-1}{v-\lambda-k}2^{\lambda-k}\\
 &=2^\lambda \binom{v-1}{v-\lambda}\sum_{k=0}^{\lambda}
 \frac{\binom{\lambda}{k}\binom{v-\lambda}{k}}{\binom{\lambda+k-1}{k}}2^{-k},
\end{align*}
which is identical to the sequence~\cite[Proposition~7]{castiglione} and the sequence A181289 in the OEIS (written in slightly
different form). However, we were unable to find a closed form expression
for the sum; neither~\cite{gould} nor~\cite{gouldbook} shed any
light. The hope is to obtain an alternative bound for \eqref{eq:step3}
that does not involve the factor $e^{|\bsnu|}$. This is open for future
research.

As an alternative approach to the presented bootstrapping method, holomorphy arguments can be used to derive similar regularity bounds, see, e.g.,~\cite{cohendevore}.
\end{remark}

\begin{theorem} \label{thm:expbound-q1}
Let $\theta>0$, $\alpha_1,\alpha_2 \geq 0$, with $\alpha_1 + \alpha_2 >
0$. Let $f = (z,u_0) \in \calY'$ and $\widehat{u} \in \calX$. For
every $\bsy \in U$, let $u^\bsy \in \calX$ be the solution of
\eqref{eq:model} and $\Phi^\bsy$ be as in~\eqref{eq:Phi}, and then let
$q^\bsy = (q_1^\bsy,q_2^\bsy) \in \calY$ be the solution of
\eqref{eq:weakdual} with $f_{\rm dual}$ given by \eqref{eq:f-dual2}. Then
for all $\bsnu\in\mathscr{F}$ we have
$$
 \big\|\partial^\bsnu_\bsy \big(\!\exp(\theta\,\Phi^\bsy)\,q_1^\bsy\big)\big\|_{L^2(V;I)}
\leq
 \big\|\partial^\bsnu_\bsy \big(\!\exp(\theta\,\Phi^\bsy)\,q^\bsy\big)\big\|_{\calY}
 \le \frac{\mu}{2}\, (|\bsnu|+2)!\, (e\bsb)^\bsnu,
$$
where the sequence $\bsb = (b_j)_{j\ge 1}$ is defined by~\eqref{eq:bj},
$\sigma$ is defined by \eqref{eq:sigma2} and
\[
  \mu := e^{\max(\sigma,\,\sigma e^2+2\sigma-1)}\,
  \Big(\frac{\alpha_1 + \alpha_2\,\|E_T\|_{\calX\to L^2(D)}}{\beta_1}\Big)
  \Big(\frac{\|f\|_{\calY'}}{\beta_1} + \|\widehat{u}\|_\calX\Big).
\]
\end{theorem}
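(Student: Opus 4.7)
The strategy is a direct application of the Leibniz product rule in conjunction with the two regularity bounds already in hand — Theorem~\ref{thm:expbound} for the exponential factor and Theorem~\ref{thm:adjregularity} for the adjoint state $q^\bsy$ — followed by a short combinatorial reduction. The first inequality in the statement is immediate: since $q^\bsy=(q_1^\bsy,q_2^\bsy)\in\calY=L^2(V;I)\times L^2(D)$ and the scalar factor $\exp(\theta\Phi^\bsy)$ commutes with componentwise differentiation, one has
\begin{align*}
 \|\partial^\bsnu_\bsy(\exp(\theta\Phi^\bsy)\,q_1^\bsy)\|_{L^2(V;I)}^2 \,\le\, \|\partial^\bsnu_\bsy(\exp(\theta\Phi^\bsy)\,q_1^\bsy)\|_{L^2(V;I)}^2 + \|\partial^\bsnu_\bsy(\exp(\theta\Phi^\bsy)\,q_2^\bsy)\|_{L^2(D)}^2,
\end{align*}
and the right-hand side equals $\|\partial^\bsnu_\bsy(\exp(\theta\Phi^\bsy)\,q^\bsy)\|_\calY^2$.

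For the main inequality I apply the Leibniz rule (valid here since $\exp(\theta\Phi^\bsy)$ is scalar-valued while $q^\bsy$ is $\calY$-valued) together with the triangle inequality in $\calY$ to obtain
\begin{align*}
 \|\partial^\bsnu_\bsy(\exp(\theta\Phi^\bsy)\,q^\bsy)\|_\calY \,\le\, \sum_{\bsm\le\bsnu}\binom{\bsnu}{\bsm}\,|\partial^\bsm_\bsy\exp(\theta\Phi^\bsy)|\,\|\partial^{\bsnu-\bsm}_\bsy q^\bsy\|_\calY.
\end{align*}
I then insert the bound $|\partial^\bsm_\bsy\exp(\theta\Phi^\bsy)|\le e^{\max(\sigma,\sigma e^2+2\sigma-1)}|\bsm|!(e\bsb)^\bsm$ from Theorem~\ref{thm:expbound} and the bound $\|\partial^{\bsnu-\bsm}_\bsy q^\bsy\|_\calY\le\tfrac{\alpha_1+\alpha_2\|E_T\|^2_{\calX\to L^2(D)}}{\beta_1}\bigl(\tfrac{\|f\|_{\calY'}}{\beta_1}+\|\widehat{u}\|_\calX\bigr)(|\bsnu-\bsm|+1)!\,\bsb^{\bsnu-\bsm}$ from Theorem~\ref{thm:adjregularity}. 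Using $(e\bsb)^\bsm\,\bsb^{\bsnu-\bsm}=e^{|\bsm|}\bsb^\bsnu\le e^{|\bsnu|}\bsb^\bsnu=(e\bsb)^\bsnu$, the $\bsm$-independent factor $(e\bsb)^\bsnu$ can be pulled out, while the remaining $\bsy$-independent constants collect into $\mu$. This leaves only the combinatorial sum
\begin{align*}
 S_\bsnu \,:=\, \sum_{\bsm\le\bsnu}\binom{\bsnu}{\bsm}\,|\bsm|!\,(|\bsnu-\bsm|+1)!.
\end{align*}

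The final step is to evaluate $S_\bsnu$. Grouping the terms by $|\bsm|=\ell$ and invoking the Vandermonde-type identity~\eqref{eq:vander}, I get
\begin{align*}
 S_\bsnu \,=\, \sum_{\ell=0}^{|\bsnu|}\binom{|\bsnu|}{\ell}\,\ell!\,(|\bsnu|-\ell+1)! \,=\, |\bsnu|!\sum_{\ell=0}^{|\bsnu|}(|\bsnu|-\ell+1) \,=\, |\bsnu|!\,\frac{(|\bsnu|+1)(|\bsnu|+2)}{2} \,=\, \frac{(|\bsnu|+2)!}{2},
\end{align*}
which is exactly the factor claimed. No step poses a genuine obstacle; the two things to verify carefully are that the Leibniz rule applies in this mixed scalar/vector-valued setting and that the geometric factors $\bsb^\bsm$, $\bsb^{\bsnu-\bsm}$ and $e^{|\bsm|}$ combine cleanly into $(e\bsb)^\bsnu$, so that the remaining sum depends only on $|\bsnu|$ and telescopes to a single factorial.
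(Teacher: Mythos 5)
Your proposal is correct and follows essentially the same route as the paper's proof: Leibniz rule, insertion of the bounds from Theorem~\ref{thm:expbound} and Theorem~\ref{thm:adjregularity}, absorption of $e^{|\bsm|}\le e^{|\bsnu|}$ into $(e\bsb)^\bsnu$, and reduction to the sum $\sum_{\bsm\le\bsnu}\binom{\bsnu}{\bsm}|\bsm|!\,(|\bsnu|-|\bsm|+1)!=\tfrac{(|\bsnu|+2)!}{2}$. The only (harmless) difference is that you evaluate this sum directly via the identity~\eqref{eq:vander}, whereas the paper cites Formula~(9.5) of the survey reference; your computation is a valid self-contained verification of that identity.
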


\begin{proof}
Using the Leibniz product rule and Theorem~\ref{thm:expbound} 
with Theorem~\ref{thm:adjregularity}, we obtain
\begin{align*}
 &\big\|\partial^\bsnu_\bsy \big(\!\exp(\theta\,\Phi^\bsy)\,q^\bsy\big)\big\|_{\calY}
 \le \sum_{\bsm\leq\bsnu}\binom{\bsnu}{\bsm} \big|\partial^\bsm_\bsy\exp(\theta\Phi^\bsy)\big|\,
  \big\|\partial^{\bsnu-\bsm}_\bsy q^\bsy\big\|_{\calY} \\
 &\le \sum_{\bsm\leq\bsnu}\binom{\bsnu}{\bsm}\,
 e^{\max(\sigma,\sigma\,e^2+2\sigma-1)}\,|\bsm|!\,(e\bsb)^\bsm \\
 &\qquad \times
   \Big(\frac{\alpha_1 + \alpha_2\,\|E_T\|^{2}_{\calX\to L^2(D)}}{\beta_1}\Big)
  \Big(\frac{\|f\|_{\calY'}}{\beta_1} + \|\widehat{u}\|_\calX\Big)\,
  \bsb^{\bsnu-\bsm}\,(|\bsnu| -|\bsm| + 1)! \\
 &\le \mu\, (e\bsb)^\bsnu \sum_{\bsm\leq\bsnu}\binom{\bsnu}{\bsm} |\bsm|!\,(|\bsnu| -|\bsm| + 1)!
 \;=\, \mu\, (e\bsb)^\bsnu \frac{(|\bsnu|+2)!}{2},
\end{align*}
with the last equality due to \cite[Formula~(9.5)]{kuonuyenssurvey}.
 
\end{proof}

\section{Error analysis} \label{sec:analysis}

Let $z^*$ denote the solution of \eqref{eq:reduced_OCuU} and let
$z_{s,n}^*$ be the minimizer of
\[
 J_{s,n}(z) := \mathcal{R}_{s,n}(\Phi^\bsy_{s}(z)) + \frac{\alpha_3}{2} \|z\|^2_{L^2(V';I)},
\]
where $\Phi_s^{\bsy}(z) = \Phi^{(y_1,y_2,\ldots,y_s,0,0,\ldots)}(z)$ is
the truncated version of $\Phi^\bsy(z)$ defined in \eqref{eq:Phi}, and
$\mathcal{R}_{s,n}$ is an approximation of the risk measure $\mathcal{R}$,
for which the integrals over the parameter domain $U=
[-\frac{1}{2},\frac{1}{2}]^{\mathbb{N}}$ are replaced by $s$-dimensional
integrals over $U_s= [-\frac{1}{2},\frac{1}{2}]^{s}$ and then approximated
by an $n$-point randomly-shifted QMC rule:
\[
 \calR_{s,n}(\Phi^\bsy_{s}(z)) =
 \begin{cases}
 \displaystyle\frac{1}{n}\sum_{i=1}^n \Phi^{\bsy^{(i)}}_{s}(z) & \mbox{for expected value}, \\
 \displaystyle\frac{1}{\theta} \ln \Big( \frac{1}{n}\sum_{i=1}^n \exp\big(\theta\,\Phi^{\bsy^{(i)}}_{s}(z)\big) \Big)
 & \mbox{for entropic risk measure},
 \end{cases}
\]%
for $\theta \in (0,\infty)$, for carefully chosen QMC points $\bsy^{(i)}$,
$i=1,\ldots,n$, involving a uniformly sampled random shift
$\bsDelta\in [0,1]^s$, see Section~\ref{sec:qmc}.

We have seen in the proof of Lemma~\ref{lem:exist_unique} that the risk
measures considered in this manuscript are convex and the objective
function $J$, see \eqref{eq:J-reduced}, is thus strongly convex. It is
important to note that the $n$-point QMC rule preserves the convexity of
the risk measure, so $J_{s,n}$ is a strongly convex function, because it
is a sum of a convex and a strongly convex function. Therefore we have the
optimality conditions $\langle J_{s,n}'(z_{s,n}^*), z -
z^*_{s,n}\rangle_{L^2(V;I),L^2(V';I)} \geq 0$ for all $z \in \mathcal{Z}$
and thus in particular $\langle J_{s,n}'(z_{s,n}^*), z^* -
z^*_{s,n}\rangle_{L^2(V;I),L^2(V';I)} \geq 0$. Similarly, we have
$\langle J'(z^*), z - z^* \rangle_{L^2(V;I),L^2(V';I)} \geq 0$, and in
particular $\langle -J'(z^*), z^* - z^*_{s,n} \rangle_{L^2(V;I),L^2(V';I)}
\geq 0$. Adding these inequalities gives
\begin{align*}
	\langle J'_{s,n}(z^*_{s,n}) - J'(z^*), z^* - z^*_{s,n}  \rangle_{L^2(V;I),L^2(V';I)} \geq 0\,.
\end{align*}
Hence
\begin{align*}
	&\alpha_3 \|z^* - z^*_{s,n} \|^2_{L^2(V';I)}\\
	&\leq \alpha_3 \|z^* - z^*_{s,n} \|^2_{L^2(V';I)} + \langle J'_{s,n}(z^*_{s,n}) - J'(z^*), z^* - z^*_{s,n}  \rangle_{L^2(V;I),L^2(V';I)} \\
	&= \langle J'_{s,n}(z^*_{s,n}) - \alpha_3 R_V^{-1}z^*_{s,n} - J'(z^*) + \alpha_3 R_V^{-1}z^*, z^* - z^*_{s,n}  \rangle_{L^2(V;I),L^2(V';I)}\\
	&= \langle J'_{s,n}(z^*_{s,n}) - \alpha_3 R_V^{-1}z^*_{s,n} - J'_{s,n}(z^*) + \alpha_3R_V^{-1}z^*  , z^* - z^*_{s,n}  \rangle_{L^2(V;I),L^2(V';I)} \\
	&\quad + \langle J'_{s,n}(z^*) - \alpha_3R_V^{-1}z^* - J'(z^*) + \alpha_3 R_V^{-1} z^*, z^* - z^*_{s,n} \rangle_{L^2(V;I),L^2(V';I)} \\
	&\leq \langle J'_{s,n}(z^*) - \alpha_3 R_V^{-1}z^* - J'(z^*) + \alpha_3 R_V^{-1}z^*, z^* - z^*_{s,n}  \rangle_{L^2(V;I),L^2(V';I)} \\
	&\leq \| J'_{s,n}(z^*) - \alpha_3R_V^{-1}z^* - J'(z^*) + \alpha_3 R_V^{-1}z^*\|_{L^2(V;I)} \|z^* - z^*_{s,n} \|_{L^2(V';I)} \,,
\end{align*}
where we used the $\alpha_3$-strong convexity of $J'_{s,n}$ in the fourth
step, i.e.,
\begin{align*}
\langle J'_{s,n}(z^*_{s,n}) - J'_{s,n}(z^*) - \alpha_3 R_V^{-1}(z^* - z_{s,n}^*), z^* - z_{s,n}^*\rangle_{L^2(V;I),L^2(V';I)} \leq 0\,.
\end{align*}
Thus we have with \eqref{eq:reduced_OCuU}
\begin{align*}
 \|z^* - z^*_{s,n}\|_{L^2(V';I)}
 &\leq \frac{1}{\alpha_3} \|J'(z^*) - J'_{s,n}(z^*)\|_{L^2(V;I)}.
\end{align*}

We will next expand this upper bound in order to split it into the
different error contributions: dimension truncation error and QMC error.
The different error contributions are then analyzed separately in the
following subsections for both risk measures.

In the case of the expected value, it follows from
\eqref{eq:gradient} that
\begin{align}\label{eq:split_linear}
 &\mathbb{E}_{\bsDelta} \|z^* - z^*_{s,n}\|^2_{L^2(V';I)}
 \le \frac{1}{\alpha_3^2} \mathbb{E}_{\bsDelta} \Big\| \int_U q_{1}^\bsy\,\mathrm d\bsy - \frac{1}{n} \sum_{i=1}^n q_{1,s}^{\bsy^{(i)}} \Big\|^2_{L^2(V;I)} \notag\\
 &\le \frac{{2}}{\alpha_3^2} \Big\| \int_U (q_{1}^\bsy - q_{1,s}^\bsy) \,\mathrm d\bsy \Big\|_{L^2(V;I)}^2
 + \frac{{2}}{\alpha_3^2} \mathbb{E}_{\bsDelta}\Big\| \int_{U_s} q_{1,s}^\bsy\,\mathrm d\bsy
   - \frac{1}{n} \sum_{i=1}^n q_{1,s}^{\bsy^{(i)}} \Big\|_{L^2(V;I)}^2,
\end{align}
where $q^\bsy_{1,s} := q^{(y_1,y_2,\ldots,y_s,0,0,\ldots)}_{1}$
denotes the truncated version of $q_1^\bsy$, and $\bbE_\bsDelta$
denotes the expected value with respect to the random shift $\bsDelta\in
[0,1]^s$.

In the case of the entropic risk measure, we recall that $J'(z)$ is
given by \eqref{eq:gradient_entropic}.
Let
\begin{align*}
T &:= \int_U \exp{\big(\theta\, \Phi^{\bsy}(z^*)\big)}\,\mathrm d\bsy\,, \qquad\quad\,
T_{s,n} := \frac{1}{n} \sum_{i=1}^n \exp{\big(\theta\, \Phi^{\bsy^{(i)}}_{s}(z^*_{s,n})\big)}\,,\\
S &:= \int_U \exp{\big(\theta\, \Phi^{\bsy}(z^*)\big)}\,q_1^\bsy(z^*)\,\mathrm d\bsy\,,\,\,
S_{s,n} := \frac{1}{n} \sum_{i=1}^n \exp{\big(\theta\, \Phi^{\bsy^{(i)}}_{s}(z^*_{s,n})\big)}\,q_{1,s}^{\bsy^{(i)}}(z^*_{s,n}),
\end{align*}
then we have
\begin{align*}
  \alpha_3\big\|z^* - z_{s,n}^*\big\|_{L^2(V';I)}
  &\le \Big\| \frac{S}{T} - \frac{S_{s,n}}{T_{s,n}} \Big\|_{L^2(V;I)}
  = \frac{\big\|S\,T_{s,n} - S_{s,n}\,T\big\|_{L^2(V;I)}}{T\,T_{s,n}} \\
  &= \frac{\big\|S\,T_{s,n} - S\,T + S\,T - S_{s,n}\,T\big\|_{L^2(V;I)}}{T\,T_{s,n}} \\
  &\le \frac{\big\|S\big\|_{L^2(V;I)}\,\big|T-T_{s,n}\big|}{T\,T_{s,n}}
           + \frac{\big\|S - S_{s,n}\big\|_{L^2(V;I)}}{T_{s,n}} \\
  &\le \mu\,\big|T-T_{s,n}\big| + \big\|S - S_{s,n}\big\|_{L^2(V;I)},
\end{align*}
where we used $T\ge 1$ and $T_{s,n}\ge 1$ and,  using the abbreviated notation
$g^\bsy(\bsx,t) := \exp(\theta\,\Phi^{\bsy}(z))\,q_1^{\bsy}(\bsx,t)$ we get
\begin{align*}
  \|S\|_{L^2(V;I)}^2
  &\,=\, \int_I \Big\|\int_U g^\bsy(\cdot,t) \,\rd\bsy \Big\|_V^2\,\rd t
  \,=\, \int_I \int_D \Big| \nabla \Big(\int_U g^\bsy(\bsx,t) \,\rd\bsy\Big) \Big|^2\,\rd\bsx\,\rd t \\
  &\,\le\, \int_U \int_I \int_D \big| \nabla g^\bsy(\bsx,t) \big|^2\,\rd\bsx\,\rd t\,\rd\bsy
  \,=\, \int_U \big\| g^\bsy \big\|^2_{L^2(V;I)}\,\rd\bsy 
  \,\le\, \mu^2\,,
\end{align*}
where we used Theorem~\ref{thm:expbound-q1} with $\bsnu=\bszero$.

We can write
\begin{align} \label{eq:split}
  \bbE_\bsDelta \Big\| \frac{S}{T} - \frac{S_{s,n}}{T_{s,n}} \Big\|_{L^2(V;I)}^2
  &\,\le\, 2\mu^2\,\bbE_\bsDelta \big|T-T_{s,n}\big|^2 + 2\bbE_\bsDelta \big\|S - S_{s,n}\big\|_{L^2(V;I)}^2.
\end{align}
For the first term on the right-hand side of \eqref{eq:split} we obtain
\begin{align}\label{eq:splitT}
  \bbE_\bsDelta\big|T-T_{s,n}\big|^2
  &\,\le\, {2}\big|T-T_s\big|^2 + {2}\bbE_\bsDelta \big|T_s-T_{s,n}\big|^2,
\end{align}
and for the second term we have
\begin{align}\label{eq:splitS}
  &\bbE_\bsDelta \|S-S_{s,n}\|_{L^2(V;I)}^2
  \,\le\, {2} \|S-S_s\|_{L^2(V;I)}^2 + {2} \bbE_\bsDelta \|S_s-S_{s,n}\|_{L^2(V;I)}^2.
\end{align}

\begin{remark}\label{rem:L2-Y}
Since we have $\|v_1\|_{L^2(V;I)} \leq \|v\|_{\calY}$ for all
$v=(v_1,v_2)\in \calY$ by definition, and thus in particular $\|\int_U
(q_1^\bsy - q_{1,s}^\bsy)\,\mathrm d\bsy \|_{L^2(V;I)} \leq \| \int_U
(q^\bsy - q_s^\bsy) \,\mathrm d\bsy \|_{\calY}$, we can replace
$q_1^\bsy,q_{1,s}^\bsy \in L^2(V;I)$ in \eqref{eq:split_linear} and
\eqref{eq:splitS} by $q^\bsy,q_s^\bsy \in \calY$. In order to obtain error
bounds and convergence rates for \eqref{eq:split_linear} and
\eqref{eq:splitS}, it is then sufficient to derive the results in the
$\calY$-norm, which is slightly stronger than the $L^2(V;I)$-norm.
\end{remark}

\subsection{Truncation error} \label{sec:trunc}

In this section we derive bounds and convergence rates for the errors that
occur by truncating the dimension, i.e., for the first terms in
\eqref{eq:split_linear}, \eqref{eq:splitT} and \eqref{eq:splitS}.

We prove a new and very general theorem for the truncation error based
on knowledge of regularity. The idea of the proof is based on a Taylor
series expansion and is similar to the approach in
\cite[Theorem~4.1]{GGKSS2019}. The use of Taylor series for dimension truncation error analysis has also been considered, for instance, in~\cite{matern1,GantnerPeters}.

\begin{theorem}\label{thm:dim_trunc}
Let $Z$ be a separable Banach space and let $g(\bsy): U \to Z$ be
analytically dependent on the sequence of parameters $\bsy \in U =
[-\frac{1}{2},\frac{1}{2}]^{\mathbb{N}}$. Suppose there exist
constants $C_0>0$, $r_1\geq0$, $r_2>0$, and a sequence $\bsrho =
(\rho_j)_{j\geq 1} \in \ell^p(\mathbb{N})$ for $0<p<1$, with $\rho_1 \geq
\rho_2 \geq \cdots$, such that for all $\bsy \in U$ and $\bsnu \in
\mathscr{F}$ we have
$$
\|\partial_{\bsy}^{\bsnu} g(\bsy)\|_Z \leq C_0\, (|\bsnu|+r_1)!\, (r_2\bsrho)^{\bsnu}.
$$%
Then, denoting $(\bsy_{\leq s};\boldsymbol{0}) =
(y_1,y_2,\ldots,y_s,0,0,\ldots)$, we have for all $s \in \mathbb{N}$
\begin{align*}
\Big\| \int_U \big(g(\bsy) - g(\bsy_{\leq s};\boldsymbol{0})\big)\,\mathrm d\bsy \Big\|_Z
 \le C_0\,C\,s^{-2/p+1}\,,
\end{align*}
for $C>0$ independent of $s$.
\end{theorem}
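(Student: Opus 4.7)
The plan is to combine a telescoping decomposition across the tail coordinates $\{y_j\}_{j>s}$ with a one-dimensional Taylor expansion in each activated coordinate. The crucial observation is that the uniform measure on $[-\tfrac12,\tfrac12]$ has mean zero, so the first-order Taylor term vanishes after integration over $U$, and the second-order remainder from switching on a single variable $y_{s+k}$ involves only $\rho_{s+k}^2$ (rather than cross products $\rho_j\rho_k$). Summing these contributions produces $\sum_{j>s}\rho_j^2$, which by a standard Stechkin-type estimate is $\mathcal{O}(s^{1-2/p})$, matching the claimed rate.

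Concretely, I would set $\bsy^{(0)}:=(\bsy_{\le s};\bszero)$ and $\bsy^{(k)}:=(y_1,\ldots,y_{s+k},0,0,\ldots)$ for $k\ge 1$, so that $\bsy^{(k)}$ and $\bsy^{(k-1)}$ differ only in coordinate $s+k$. The hypothesis with $\bsnu=\bse_{s+k}$ gives the first-order estimate
\[
 \|g(\bsy^{(k)})-g(\bsy^{(k-1)})\|_Z \,\le\, \tfrac12\,C_0\,(1+r_1)!\,r_2\,\rho_{s+k},
\]
which is summable in $k$ since $\bsrho\in\ell^p\subset\ell^1$ for $0<p<1$. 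This justifies both the telescoping identity $g(\bsy)-g(\bsy_{\le s};\bszero)=\sum_{k\ge 1}[g(\bsy^{(k)})-g(\bsy^{(k-1)})]$ in $Z$ and, by dominated convergence, the interchange of the sum with $\int_U\cdot\,\mathrm d\bsy$. Applying Taylor's theorem with integral remainder to $u\mapsto g(\bsy^{(k-1)}+u\bse_{s+k})$ yields
\[
 g(\bsy^{(k)})-g(\bsy^{(k-1)}) \,=\, y_{s+k}\,(\partial_{y_{s+k}}g)(\bsy^{(k-1)}) + \int_0^{y_{s+k}}(y_{s+k}-\tau)\,(\partial^2_{y_{s+k}}g)(\bsy^{(k-1)}+\tau\bse_{s+k})\,\mathrm d\tau.
\]
The linear term integrates to zero over $U$, since $(\partial_{y_{s+k}}g)(\bsy^{(k-1)})$ is independent of $y_{s+k}$ and $\int_{-1/2}^{1/2}y_{s+k}\,\mathrm dy_{s+k}=0$; the remainder is bounded using the hypothesis with $\bsnu=2\bse_{s+k}$ by $\tfrac12 y_{s+k}^2\,C_0(2+r_1)!\,r_2^2\,\rho_{s+k}^2$, so integration contributes at most a constant multiple of $\rho_{s+k}^2$.

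Summing over $k\ge 1$ and invoking the Stechkin-type bound $\rho_{s+1}\le\|\bsrho\|_{\ell^p}(s+1)^{-1/p}$ (valid because $\bsrho$ is nonincreasing and $\ell^p$-summable), one obtains
\[
 \sum_{j>s}\rho_j^2 \,\le\, \rho_{s+1}^{2-p}\sum_{j>s}\rho_j^p \,\le\, \|\bsrho\|_{\ell^p}^2\,(s+1)^{-(2-p)/p} \,=\, \mathcal{O}(s^{1-2/p}),
\]
which yields the required bound $C_0\,C\,s^{-2/p+1}$. The main conceptual obstacle is that a naive multi-variable second-order Taylor expansion along the straight path from $(\bsy_{\le s};\bszero)$ to $\bsy$ produces only the inferior rate $\bigl(\sum_{j>s}\rho_j\bigr)^2=\mathcal{O}(s^{2-2/p})$, because the cross terms $y_jy_k(\partial_{y_j}\partial_{y_k}g)$ with $j\neq k$ do not vanish upon integration (the second derivative still depends nontrivially on both $y_j$ and $y_k$); the telescoping one-coordinate-at-a-time device is precisely what recovers the missing factor $s^{-1}$. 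A secondary technical point, the swap of the infinite telescoping sum with the integral over the infinite-dimensional domain $U$, is handled by the absolute summability furnished by the first-order estimate above.
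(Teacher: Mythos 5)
Your proposal is correct, but it follows a genuinely different route from the paper's proof. The paper dualizes via Hahn--Banach, reducing to the scalar function $F(\bsy)=\langle G,g(\bsy)\rangle_{Z',Z}$, and then performs a single multivariate Taylor expansion about the anchor $(\bsy_{\le s};\bszero)$ up to order $k=\lceil \tfrac{1}{1-p}\rceil$: the first-order tail terms vanish by the zero mean, the surviving terms (all tail exponents $\ne 1$) are summed via a product over $j>s$ giving $\calO(s^{-2/p+1})$, and the order-$(k+1)$ remainder is controlled by the multinomial theorem and a Stechkin estimate, which is why the expansion order must grow as $p\to 1$. You instead telescope one tail coordinate at a time and use only a one-dimensional second-order Taylor expansion per coordinate: since the linear coefficient $(\partial_{y_{s+k}}g)(\bsy^{(k-1)})$ does not depend on $y_{s+k}$, it integrates to zero, and the integral remainder contributes a pure $\rho_{s+k}^2$ with no cross terms, so summing over $k$ and applying the same Stechkin bound gives the rate directly. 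This buys you several things: no dualization (you work with Bochner integrals throughout, though the paper's dualization step is cheap), an explicit constant of the form $\tfrac{1}{24}(2+r_1)!\,r_2^2\|\bsrho\|_{\ell^p}^2$, and, most notably, you only need the derivative hypothesis for $\bsnu=\bse_j$ and $\bsnu=2\bse_j$ with $j>s$, rather than all mixed derivatives up to order $\lceil\tfrac{1}{1-p}\rceil+1$; your diagnosis of why the straight-path second-order expansion only yields $\calO(s^{2-2/p})$ is also accurate and is precisely the obstacle the paper overcomes by raising the expansion order. One point you should state explicitly: the summability of the one-step bounds shows that your telescoping series converges absolutely, but identifying its sum with $g(\bsy)-g(\bsy_{\le s};\bszero)$ requires $g(\bsy^{(K)})\to g(\bsy)$ in $Z$, i.e.\ continuity of $g$ with respect to the product topology; this follows from the analyticity assumption (or a limiting argument), not from the summability alone, and likewise the term-by-term integration should be attributed to dominated convergence with the uniform summable majorant, as you indicate.
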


\begin{proof}
Let $\bsy\in U$ and $G\in Z'$ with $\|G\|_{Z'}\leq 1$ and define
$$
F(\bsy):=\langle G,g(\bsy) \rangle_{Z',Z}.
$$
Evidently, $\partial_{\bsy}^{\bsnu}F(\bsy)=\langle
G,\partial_{\bsy}^{\bsnu}g(\bsy)\rangle_{Z',Z}$ for all $\bsnu\in\mathscr
F$. Moreover,
$$
\sup_{\bsy\in U}|\partial_{\bsy}^{\bsnu}F(\bsy)|\leq C_0 (|\bsnu|+r_1)! (r_2\bsrho)^{\bsnu}\quad\text{for all}~\bsnu\in\mathscr F.
$$
For arbitrary $k\ge 1$ we consider the Taylor expansion of $F$ about the point
$(\bsy_{\leq s};\mathbf 0) = (y_1,y_2,\ldots,y_s,0,0,\ldots)$:
\begin{align*}
F(\bsy)&=F(\bsy_{\leq s};\mathbf 0)+\sum_{\ell=1}^k \sum_{\substack{|\bsnu|=\ell\\ \nu_j=0~\forall j\leq s}}\frac{\bsy^{\bsnu}}{\bsnu!}\partial_{\bsy}^{\bsnu}F(\bsy_{\leq s};\mathbf 0)\\
&\quad +\sum_{\substack{|\bsnu|=k+1\\ \nu_j=0~\forall j\leq s}}\frac{k+1}{\bsnu!}\,\bsy^{\bsnu}\int_0^1(1-t)^k\partial_{\bsy}^{\bsnu}F(\bsy_{\leq s};t\bsy_{>s})\,{\rm d}t.
\end{align*}
Rearranging this equation and integrating over $\bsy\in U$ yields
\begin{align} \label{eq:taylorstep}
 &\int_U (F(\bsy)-F(\bsy_{\leq s};\mathbf 0))\,{\rm d}\bsy
 =\sum_{\ell=1}^k \sum_{\substack{|\bsnu|=\ell\\ \nu_j=0~\forall j\leq s}}\frac{1}{\bsnu!}
 \int_U \bsy^{\bsnu}\,\partial_{\bsy}^{\bsnu}F(\bsy_{\leq s};\mathbf 0)\,{\rm d}\bsy
 \nonumber\\
 &\qquad + \sum_{\substack{|\bsnu|=k+1\\ \nu_j=0~\forall j\leq s}}\frac{k+1}{\bsnu!}
 \int_U\int_0^1(1-t)^k\,\bsy^{\bsnu}\,\partial_{\bsy}^{\bsnu}F(\bsy_{\leq s};t\,\bsy_{>s})\,{\rm d}t\,{\rm d}\bsy.
\end{align}
If there is any component $\nu_j=1$ with $j>s$, then the summand in
the first term vanishes, since (for all $\bsnu \in \mathscr F$ with $\nu_j
= 0$ $\forall j\leq s$)
$$
 \int_U \bsy^{\bsnu}\partial_{\bsy}^{\bsnu}F(\bsy_{\leq s};\mathbf 0)\,{\rm d}\bsy
 =\int_{U_{\leq s}} \partial_{\bsy}^{\bsnu}F(\bsy_{\leq s};\mathbf 0)\,
 \underset{=0 \text{ if at least one } \nu_j = 1}{\underbrace{\Big(\prod_{j>s}\int_{-1/2}^{1/2}y_j^{\nu_j}\,{\rm d}y_j\Big)}}\,{\rm d}\bsy_{\leq s}=0,
$$
where we used Fubini's theorem. Taking the absolute value on both sides in~\eqref{eq:taylorstep} and using $|y_j|\leq \frac12$, we obtain
\begin{align}
&\Big|\int_U (F(\bsy)-F(\bsy_{\leq s};\mathbf 0))\,{\rm d}\bsy\Big|\notag\\
&\leq \sum_{\ell=2}^k \!\!\!\sum_{\substack{|\bsnu|=\ell\\ \nu_j=0~\forall j\leq s\\ \nu_j\neq 1~\forall j>s}}\!\!\frac{1}{2^{\bsnu}\bsnu!}\sup_{\bsy\in U}|\partial_{\bsy}^{\bsnu}F(\bsy)|
 +\!\!\!\!\sum_{\substack{|\bsnu|=k+1\\ \nu_j=0~\forall j\leq s}}\!\!\frac{k+1}{2^{\bsnu}\bsnu!} \int_0^1 (1-t)^k \sup_{\bsy\in U}|\partial_{\bsy}^{\bsnu}F(\bsy)|\, \mathrm dt\notag\\
&= \sum_{\ell=2}^k \!\!\!\sum_{\substack{|\bsnu|=\ell\\ \nu_j=0~\forall j\leq s\\ \nu_j\neq 1~\forall j>s}}\frac{1}{2^{\bsnu}\bsnu!}\sup_{\bsy\in U}|\partial_{\bsy}^{\bsnu}F(\bsy)|
 + \!\!\!\! \sum_{\substack{|\bsnu|=k+1\\ \nu_j=0~\forall j\leq s}}\frac{1}{2^{\bsnu}\bsnu!} \sup_{\bsy\in U}|\partial_{\bsy}^{\bsnu}F(\bsy)|\notag\\
&\leq \sum_{\ell=2}^k \!\!\! \sum_{\substack{|\bsnu|=\ell\\ \nu_j=0~\forall j\leq s\\ \nu_j\neq 1~\forall j>s}}\frac{1}{2^{\bsnu}\bsnu!} C_0\,(|\bsnu|+r_1)!\, (r_2\bsrho)^{\bsnu}
 + \!\!\!\!\sum_{\substack{|\bsnu|=k+1\\ \nu_j=0~\forall j\leq s}}\frac{1}{2^{\bsnu}\bsnu!} C_0\, (|\bsnu|+r_1)!\, (r_2\bsrho)^{\bsnu}\notag\\
&\leq C_0\,(k+r_1)!\,\frac{{r_2}^k}{2^2\,2!} \sum_{\ell=2}^k \!\!\!
 \sum_{\substack{|\bsnu|=\ell\\ \nu_j=0~\forall j\leq s\\ \nu_j\neq 1~\forall j>s}}\!\!\!\! \bsrho^{\bsnu}
 +C_0\,(k+1+r_1)!\,\left(\frac{{r_2}}{2}\right)^{k+1} \!\!\!\!\!\!\!
 \sum_{\substack{|\bsnu|=k+1\\ \nu_j=0~\forall j\leq s}} \!\!\!\!\!\frac{\bsrho^{\bsnu}}{\bsnu!}.\label{eq:presup}
\end{align}
Furthermore, we have 
\begin{align*}
\Big\|\int_U (g(\bsy) - g(\bsy_{\leq s};\boldsymbol{0}))\,{\rm d}\bsy\Big\|_{Z}
&= \sup_{\substack{G\in Z'\\ \|G\|_{Z'}\leq 1}}\Big|\Big\langle G,\int_U (g(\bsy) - g(\bsy_{\leq s};\boldsymbol{0}))\,{\rm d}\bsy\Big\rangle_{Z',Z}\Big|\\
&=\sup_{\substack{G\in Z'\\ \|G\|_{Z'}\leq 1}}\Big|\int_U\langle G,g(\bsy) - g(\bsy_{\leq s};\boldsymbol{0})\rangle_{Z',Z}\,{\rm d}\bsy\Big| \\
&=\sup_{\substack{G\in Z'\\ \|G\|_{Z'}\leq 1}}\Big|\int_U (F(\bsy)-F(\bsy_{\leq s};\mathbf 0))\,{\rm d}\bsy\Big|\,,
\end{align*}
which is also bounded by the last expression in \eqref{eq:presup}.

For $s$ sufficiently large, we obtain in complete analogy to
\cite{Gantner} that the first term in \eqref{eq:presup} satisfies
\begin{align*}
\sum_{\ell=2}^k \sum_{\substack{|\bsnu|=\ell\\ \nu_j=0~\forall j\leq s\\ \nu_j\neq 1~\forall j>s}}\bsrho^{\bsnu}
&=\sum_{\substack{2\le |\bsnu|\leq k\\ \nu_j=0~\forall j\leq s\\ \nu_j\neq 1~\forall j>s}}\bsrho^{\bsnu}
 \leq \sum_{\substack{0 \neq |\bsnu|_\infty \leq k\\ \nu_j=0~\forall j\leq s\\ \nu_j\neq 1~\forall j>s}}\bsrho^{\bsnu}
 =-1+\prod_{j>s}\Big(1+\sum_{\ell=2}^k \rho_j^\ell\Big)\\
&=-1+\prod_{j>s}\Big(1+\frac{1-\rho_j^{k-1}}{1-\rho_j}\,\rho_j^2\Big)=\mathcal O(s^{-2/p+1}),
\end{align*}
since $\bsrho\in \ell^p$ with $0<p<1$ and $\rho_1\geq
\rho_2\geq \cdots$ by assumption.

On the other hand, we can use the multinomial theorem to bound
the second term in \eqref{eq:presup}
$$
\sum_{\substack{|\bsnu|=k+1\\ \nu_j=0~\forall j\leq s}} \frac{\bsrho^{\bsnu} }{\bsnu!}
\leq \sum_{\substack{|\bsnu|=k+1\\ \nu_j=0~\forall j\leq s}} \binom{k+1}{\bsnu}\bsrho^{\bsnu}
=\bigg(\sum_{j>s}\rho_j\bigg)^{k+1}
=\mathcal O(s^{(k+1)(-1/p+1)}),
$$
where we used $\sum_{j>s}\rho_j\leq s^{-1/p+1} (\sum_{j=1}^\infty \rho_j^p
)^{1/p}$. (The last inequality follows directly from
\cite[Lemma~5.5]{CohenDeVoreSchwab}, which is often attributed to
Stechkin. For an elementary proof we refer to \cite[Lemma
3.3]{KressnerTobler}.)

Taking now $k=\lceil \frac{1}{1-p}\rceil$ yields that
\eqref{eq:presup} is of order $\calO(s^{-2/p+1})$. Note that $k\geq 2$
for $0<p<1$. The result can be extended to all $s$ by a trivial adjustment
of the constants.  
\end{proof}

\begin{remark}
The assumption of analyticity of the integrand can be replaced since the
Taylor series representation remains valid under the weaker assumption
that only the $\bsnu$-th partial derivatives with $|\bsnu|\leq k+1$ for
$k=\lceil \frac{1}{1-p}\rceil$ and $0<p<1$ exist.
\end{remark}

We now apply this general result to the first terms in
\eqref{eq:split_linear}, \eqref{eq:splitT} and \eqref{eq:splitS}.

\begin{theorem} \label{thm:trunc}
Let $\theta>0$, $\alpha_1,\alpha_2 \geq 0$, with $\alpha_1 + \alpha_2 >
0$. Let $f = (z,u_0) \in \calY'$ and $\widehat{u} \in \calX$. For
every $\bsy \in U$, let $u^\bsy \in \calX$ be the solution of
\eqref{eq:model} and $\Phi^\bsy$ be as in~\eqref{eq:Phi}, and then let
$q^\bsy \in \calY$ be the solution of \eqref{eq:weakdual} with $f_{\rm
dual}$ given by \eqref{eq:f-dual2}. Suppose the sequence $\bsb =
(b_j)_{j\ge 1}$ defined by~\eqref{eq:bj} satisfies
\begin{align}
 &\sum_{j\ge 1} b_j^p < \infty \quad\mbox{for some } 0 < p < 1, \quad\mbox{and} \label{eq:dim_assumption1}
 \\
 &b_1\ge b_2 \ge \cdots. \label{eq:dim_assumption2}
\end{align}
Then for every $s \in \mathbb{N}$, the truncated solutions $u_s^\bsy$, $q_s^\bsy$ and
$\Phi_s^\bsy$ satisfy
\begin{align*}
 \Big\| \int_U (u^\bsy - u_s^\bsy) \,\rd\bsy \Big\|_{{\calX}}
 &\leq C\, s^{-2/p+1},
 \\
  \Big\| \int_U (q^\bsy - q_s^\bsy) \,\rd\bsy \Big\|_{\calY}
 &\leq C\, s^{-2/p+1},
 \\
 \|S - S_s\|_{\calY} =
 \Big\| \int_U \big(\exp{\big(\theta\, \Phi^{\bsy}\big)}\,q^\bsy - \exp{\big(\theta\,  \Phi^\bsy_s\big)}\,q_s^\bsy\big) \,\rd\bsy \Big\|_{\calY}
 &\leq C\, s^{-2/p+1},
 \\
 |T - T_s| =
 \Big| \int_U \big(\exp{\big(\theta\, \Phi^{\bsy}\big)} - \exp{\big(\theta\,  \Phi^\bsy_s\big)}\big) \,\rd\bsy \Big|
 &\leq C\, s^{-2/p+1}.
\end{align*}
In each case we have a generic constant $C>0$ independent of $s$, but
depending on $z$, $u_0$, $\widehat{u}$ and other constants as
appropriate.
\end{theorem}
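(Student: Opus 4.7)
The proof reduces to four separate applications of the general dimension truncation Theorem~\ref{thm:dim_trunc} to a separable Banach space $Z$-valued analytic integrand $g(\bsy)$. In each case I would specify the triple $(Z,r_1,r_2,C_0)$ and invoke the already-established parametric regularity bounds, taking $\bsrho=\bsb$ in all four instances. Assumptions \eqref{eq:dim_assumption1} and \eqref{eq:dim_assumption2} provide exactly the hypothesis $\bsrho\in\ell^p(\mathbb N)$ with $\rho_1\ge \rho_2\ge\cdots$ required by Theorem~\ref{thm:dim_trunc}. Analyticity of each integrand on $U$ follows from the affine structure of $a^\bsy$ in~\eqref{eq:axy}: $u^\bsy$ and $q^\bsy$ depend analytically on $\bsy$ as solutions of affine parametric operator equations, and $\Phi^\bsy$ is a quadratic polynomial in $u^\bsy$, so $\exp(\theta\Phi^\bsy)$ and the product $\exp(\theta\Phi^\bsy)q^\bsy$ are analytic as well.

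For the first bound I would take $g(\bsy)=u^\bsy\in Z:=\calX$. Lemma~\ref{lem:statebound} delivers precisely the required estimate with $C_0=\|f\|_{\calY'}/\beta_1$, $r_1=0$, $r_2=1$. For the second bound I would take $g(\bsy)=q^\bsy\in Z:=\calY$ and use Theorem~\ref{thm:adjregularity} with $r_1=1$, $r_2=1$, and $C_0=\tfrac{\alpha_1+\alpha_2\|E_T\|_{\calX\to L^2(D)}^2}{\beta_1}\bigl(\tfrac{\|f\|_{\calY'}}{\beta_1}+\|\widehat u\|_\calX\bigr)$. For the third (and most informative) bound I would take $g(\bsy)=\exp(\theta\Phi^\bsy)q^\bsy\in Z:=\calY$ and apply Theorem~\ref{thm:expbound-q1}, which provides exactly a bound of the form $C_0(|\bsnu|+r_1)!(r_2\bsrho)^\bsnu$ with $C_0=\mu/2$, $r_1=2$, $r_2=e$. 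For the fourth bound I would take $g(\bsy)=\exp(\theta\Phi^\bsy)\in Z:=\mathbb R$ and invoke Theorem~\ref{thm:expbound} with $C_0=e^{\max(\sigma,\sigma e^2+2\sigma-1)}$, $r_1=0$, $r_2=e$. In each case Theorem~\ref{thm:dim_trunc} yields the claimed rate $\calO(s^{-2/p+1})$, and the constants depend only on $z$, $u_0$, $\widehat u$, $\theta$, $\alpha_1,\alpha_2$, $\beta_1$, $\|E_T\|_{\calX\to L^2(D)}$ and $\bsb$.

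A small bookkeeping point is that the statement talks about $\Phi^\bsy_s$ and $q^\bsy_s$, i.e.\ truncation of the integrand before integration, whereas Theorem~\ref{thm:dim_trunc} expresses the error as $\int_U(g(\bsy)-g(\bsy_{\leq s};\mathbf 0))\,\rd\bsy$. Since $\Phi^\bsy_s$, $u^\bsy_s$ and $q^\bsy_s$ are defined precisely by the evaluation $\bsy\mapsto(y_1,\ldots,y_s,0,0,\ldots)$ and all compositions (exponential, product) respect this evaluation, the two notions coincide and no additional argument is needed. Likewise, Remark~\ref{rem:L2-Y} lets us bound the truncation error in the weaker $L^2(V;I)$-norm needed later by the $\calY$-error proved here.

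I do not expect a genuine obstacle: the work has been front-loaded into the regularity analysis of Sections~\ref{sec:reg-adj}--\ref{sec:reg-ent}, whose conclusions were tailored to match the hypothesis of Theorem~\ref{thm:dim_trunc}. The only mildly delicate point is to confirm that the different factorial growths ($r_1=0,1,2$) do not affect the rate, which is true because Theorem~\ref{thm:dim_trunc} allows any fixed $r_1\ge 0$ and the choice $k=\lceil 1/(1-p)\rceil$ in its proof depends on $p$ alone.
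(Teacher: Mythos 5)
Your proposal is correct and follows essentially the same route as the paper: the paper's proof is exactly a corollary of Theorem~\ref{thm:dim_trunc} combined with the regularity bounds of Lemma~\ref{lem:statebound}, Theorem~\ref{thm:adjregularity}, Theorem~\ref{thm:expbound-q1} and Theorem~\ref{thm:expbound}, with the same identifications of $(Z,C_0,r_1,r_2)$ and $\bsrho=\bsb$ that you give. Your additional remarks on analyticity, the equivalence of truncation with evaluation at $(\bsy_{\le s};\bszero)$, and the irrelevance of $r_1$ to the rate are consistent with (and slightly more explicit than) the paper's one-line argument.
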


\begin{proof}
The result is a corollary of Theorem~\ref{thm:dim_trunc} by applying the
regularity bounds in Lemma~\ref{lem:statebound}, Theorem~\ref{thm:adjregularity},
Theorem~\ref{thm:expbound-q1} and Theorem~\ref{thm:expbound}.  
\end{proof}

\subsection{Quasi-Monte Carlo error} \label{sec:qmc}

We are interested in computing $s$-dimensional Bochner integrals of the form
\begin{align*}
I_s(g):=\int_{U_s}g(\bsy)\,{\rm d}\bsy,
\end{align*}
where $g(\bsy)$ is an element of a separable Banach space $Z$ for each $\bsy \in U_s$.
As our estimator of $I_s(g)$, we use a cubature rule of the form
$$
Q_{s,n}(g) := \sum_{i=1}^n \alpha_i \,g(\bsy^{(i)}).
$$
with weights $\alpha_i \in \mathbb{R}$ and cubature points $\bsy^{(i)} \in
U_s$. In particular, we are interested in QMC rules (see, e.g.,
\cite{dick2013high,kuonuyenssurvey}), which are cubature rules
characterized by equal weights $\alpha_i = 1/n$ and carefully chosen
points $\bsy^{(i)}$ for $i=1,\ldots,n$.

We shall see that for sufficiently smooth integrands, randomly shifted
lattice rules lead to convergence rates not depending on the dimension,
and which are faster compared to Monte Carlo methods. Randomly shifted
lattice rules are QMC rules with cubature points given by
\begin{align}
\bsy^{(i)}_{\boldsymbol{\Delta}} :=
\mathrm{frac}\Big(\frac{i\boldsymbol{z}}{n}+\boldsymbol{\Delta}\Big) - \Big(\frac{1}{2},\ldots,\frac{1}{2}\Big),\label{eq:qmc1}
\end{align}
where $\boldsymbol{z}\in \mathbb{N}^s$ is known as the generating vector,
$\boldsymbol{\Delta} \in [0,1]^s$ is the random shift and $\mathrm{frac}(\cdot)$
means to take the fractional part of each component in the vector. In
order to get an unbiased estimator, in practice we take the mean over
$R$ uniformly drawn random shifts, i.e., we estimate $I_s(g)$ using
\begin{align}
\overline{Q}(g):=\frac{1}{R}\sum_{r=1}^R Q^{(r)}(g),\quad\text{with}\quad Q^{(r)}(g):=\frac{1}{n}\sum_{i=1}^ng(\bsy^{(i)}_{\boldsymbol{\Delta}^{(r)}}).\label{eq:qmc2}
\end{align}

In this section we derive bounds and convergence rates for the errors that
occur by applying a QMC method for the approximation of the integrals in
the second terms in \eqref{eq:split_linear}, \eqref{eq:splitT} and
\eqref{eq:splitS}. We first prove a new general result which holds for
any cubature rule in a separable Banach space setting.

\begin{theorem}\label{thm:gencub} Let $U_s = [-\frac{1}{2},\frac{1}{2}]^s$ and let
$\calW_s$ be a Banach space of functions $F: U_s\to \bbR$, which is
continuously embedded in the space of continuous functions. Consider an
$n$-point cubature rule with weights $\alpha_i\in\bbR$ and points
$\bsy^{(i)}\in U_s$, given by
\[
 I_s(F) \,:=\, \int_{U_s} F(\bsy)\,\rd\bsy
 \,\approx\, \sum_{i=1}^n \alpha_i\, F(\bsy^{(i)})
 \,=:\, Q_{s,n}(F),
\]
and define the worst case error of $Q_{s,n}$ in $\calW_s$ by
\[
  e^{\rm wor}(Q_{s,n};\calW_s) \,:=\,
  \sup_{F\in\calW_s,\, \|F\|_{\calW_s}\le 1} |I_s(F) - Q_{s,n}(F)|.
\]
Let $Z$ be a separable Banach space and let $Z'$ denote its dual space.
Let $g: \bsy \mapsto g(\bsy)$ be continuous and $g(\bsy) \in Z$ for all
$\bsy \in U_s$. Then
\begin{align} \label{eq:general}
 \Big\|\int_{U_s} g(\bsy) \,\rd\bsy
  - \sum_{i=1}^n \alpha_i\, g(\bsy^{(i)}) \Big\|_Z
 \,\le\, e^{\rm wor}(Q_{s,n};\calW_s)\,
 \sup_{\satop{G\in Z'}{\|G\|_{Z'}\le 1}} \| G(g)\|_{\calW_s}.
\end{align}
\end{theorem}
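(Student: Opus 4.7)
The plan is a straightforward duality argument. By a standard corollary of the Hahn--Banach theorem, for every $v\in Z$ we have
\[
 \|v\|_Z \,=\, \sup_{G\in Z',\,\|G\|_{Z'}\le 1} |G(v)|.
\]
I would apply this identity with
$$
 v \,:=\, \int_{U_s} g(\bsy)\,\rd\bsy - \sum_{i=1}^n \alpha_i\,g(\bsy^{(i)}) \,\in\, Z.
$$
The Bochner integral here is well defined because $g$ is continuous on the compact set $U_s$ and takes values in the separable Banach space $Z$, so $g$ is strongly measurable and bounded, hence Bochner integrable.

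The key step is to commute each $G\in Z'$ with the cubature error. For the finite sum this is immediate by linearity of $G$, while for the Bochner integral one uses the standard property $G\big(\int_{U_s} g(\bsy)\,\rd\bsy\big) = \int_{U_s} G(g(\bsy))\,\rd\bsy$, which follows from the definition of the Bochner integral and the continuity of $G$. Introducing the scalar-valued function $F_G(\bsy) := G(g(\bsy))$, which is continuous on $U_s$ since both $g$ and $G$ are continuous, I obtain
$$
 G\Big(\int_{U_s} g(\bsy)\,\rd\bsy - \sum_{i=1}^n \alpha_i\,g(\bsy^{(i)})\Big)
 \,=\, I_s(F_G) - Q_{s,n}(F_G).
$$

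It then remains to invoke the definition of the worst-case error in $\calW_s$: whenever $F_G\in \calW_s$, one has $|I_s(F_G) - Q_{s,n}(F_G)| \le e^{\rm wor}(Q_{s,n};\calW_s)\,\|F_G\|_{\calW_s}$. Taking the supremum over $G$ in the dual unit ball then yields \eqref{eq:general}, under the natural convention $\|F_G\|_{\calW_s} = +\infty$ whenever $F_G\notin \calW_s$ (in which case the right-hand side is trivially an upper bound). Note that the continuous embedding of $\calW_s$ into $C(U_s)$ is precisely what ensures that the point evaluations $F_G\mapsto F_G(\bsy^{(i)})$ used in $Q_{s,n}$ are well defined on $\calW_s$.

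I do not anticipate a serious obstacle: the whole argument is an application of Hahn--Banach duality combined with the commutation of bounded linear functionals with Bochner integrals. The only point requiring a little care is precisely that commutation, which relies on the separability of $Z$ and the continuity of $g$; beyond this, the reduction to the scalar worst-case error bound is routine.
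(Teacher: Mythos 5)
Your proposal is correct and follows essentially the same route as the paper's proof: Hahn--Banach duality to express the $Z$-norm of the cubature error as a supremum over the dual unit ball, commuting each functional $G$ with the Bochner integral and the finite sum, and then applying the scalar worst-case error bound to $F_G = G(g)$. The paper's argument is identical in structure, including the same justification of Bochner integrability via separability, continuity and compactness of $U_s$.
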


\begin{proof}
From the separability of $Z$ and the continuity of $g(\bsy)$ we get strong
measurability of $g(\bsy)$. Moreover, from the compactness of $U_s$ and
the continuity of $\bsy \mapsto g(\bsy)$ we conclude that $\sup_{\bsy \in
U_s}\|g(\bsy)\|_Z < \infty$ and hence $\int_{U_s} \|g(\bsy)\|_Z\, \rd\bsy
< \infty$, which in turn implies $\| \int_{U_s} g(\bsy)\, \rd\bsy\|_Z
<\infty$. Thus $g(\bsy)$ is Bochner integrable.

Furthermore, for every normed space $Z$, its dual space $Z'$ is a Banach
space equipped with the norm $\|G\|_{Z'} := \sup_{g \in Z,\, \|g\|_{Z}
\leq 1} |\langle G,g\rangle_{Z',Z}|$. Then it holds for every $g\in Z$
that $\|g\|_Z = \sup_{G \in Z',\, \|G\|_{Z'} \leq 1} |\langle G,
g\rangle_{Z',Z}|$. This follows from the Hahn--Banach Theorem, see, e.g.,
\cite[Theorem 4.3]{rudin}.

Thus we have
\begin{align} \label{eq:supbound}
 &\Big\| \int_{U_s} g(\bsy)\, \rd\bsy - \sum_{i=1}^n \alpha_i\, g(\bsy^{(i)}) \Big\|_Z \nonumber\\
 &\qquad= \sup_{\stackrel{G\in Z'}{\|G\|_{Z'}\leq 1}}
 \Big| \Big\langle G, \int_{U_s} g(\bsy)\, \rd\bsy
 - \sum_{i=1}^n \alpha_i\, g(\bsy^{(i)}) \Big\rangle_{Z',Z} \Big| \notag\\
 &\qquad= \sup_{\stackrel{G\in Z'}{\|G\|_{Z'}\leq 1}}
 \Big| \int_{U_s} \langle G,g(\bsy)\rangle_{Z',Z}\, \rd\bsy
 - \sum_{i=1}^n \alpha_i\, \langle G,g(\bsy^{(i)}) \rangle_{Z',Z} \Big| \,,
\end{align}
where we used the linearity of $G$ and the fact that for Bochner integrals
we can swap the integral with the linear functional, see, e.g.,
\cite[Corollary V.2]{Yosida}.

From the definition of the worst case error of $Q_{s,n}$ in $\calW_s$ it
follows that for any $F\in \calW_s$ we have
\[
  |I_s(F) - Q_{s,n}(F)| \,\le\, e^{\rm wor}(Q_{s,n};\calW_s)\,\|F\|_{\calW_s}.
\]
Applying this to the special case $F(\bsy) = G(g(\bsy)) = \langle G,
g(\bsy)\rangle_{Z',Z}$ in \eqref{eq:supbound} yields \eqref{eq:general}.
 
\end{proof}

\begin{theorem}\label{thm:MSE}
Let the assumptions of the preceding Theorem hold. In addition,
suppose there exist constants $C_0>0$, $r_1\ge 0$, $r_2>0$ and a positive
sequence $\bsrho = (\rho_j)_{j\ge 1}$ such that for all $\setu \subseteq
\{1:s\}$ and for all $\bsy \in U_s$ we have
\begin{align} \label{eq:ass-u}
 \Big\| \frac{\partial^{|\setu|}}{\partial \bsy_{\setu}} g(\bsy)\Big\|_Z
 \leq C_0\, (|\setu|+r_1)!\, \prod_{j \in \setu} (r_2\,\rho_j).
\end{align}
Then, a randomly shifted lattice rule can be constructed using a component-by-component (CBC)
algorithm such that
\begin{align*}
    \bbE_\bsDelta \Big\|\int_{U_s} g(\bsy) \,\rd\bsy
  - \frac{1}{n} \sum_{i=1}^n g({\bsy^{(i)}}) \Big\|_{Z}^2
  \,\le\, C_{s,\bsgamma,\lambda}\, [\phi_{\rm tot}(n)]^{-1/\lambda}
  \,\,\,\mbox{for all}\,\,\, \lambda\in (\tfrac{1}{2},1],
\end{align*}
where $\phi_{\rm tot}(n)$ is the Euler totient function, with $1/\phi_{\rm
tot}(n)\le 2/n$ when $n$ is a prime power, and
\begin{align} \label{eq:lattice-bound}
 C_{s,\bsgamma,\lambda}
 := C_0^2
 \bigg( \sum_{\emptyset\ne\setu\subseteq\{1:s\}} \!\!\!\gamma_\setu^\lambda
 \bigg(\frac{2\zeta(2\lambda)}{(2\pi^2)^\lambda} \bigg)^{|\setu|}\bigg)^{\frac{1}{\lambda}}
 \bigg( \sum_{\setu\subseteq\{1:s\}} \!\!\!\frac{[(|\setu|+r_1)!]^2 
 \prod_{j\in\setu} (r_2\,\rho_j)^2}{\gamma_\setu} \bigg).
\end{align}
\end{theorem}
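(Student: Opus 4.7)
The plan is to specialize Theorem~\ref{thm:gencub} by taking $\calW_s$ to be the standard weighted unanchored first-order Sobolev space on $U_s=[-\tfrac12,\tfrac12]^s$, equipped with weights $(\gamma_\setu)_{\setu\subseteq\{1:s\}}$ and norm
\begin{align*}
 \|F\|_{\calW_s}^2
 \,:=\, \sum_{\setu\subseteq\{1:s\}} \frac{1}{\gamma_\setu}
 \int_{[-\frac12,\frac12]^{|\setu|}} \bigg(\int_{[-\frac12,\frac12]^{s-|\setu|}} \!\!\!\frac{\partial^{|\setu|}F(\bsy)}{\partial \bsy_\setu}\,\rd\bsy_{\{1:s\}\setminus\setu}\bigg)^{\!2}\! \rd\bsy_\setu,
\end{align*}
and then combining the deterministic bound \eqref{eq:general} with the classical mean-square worst case error bound for randomly shifted lattice rules in this space.

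Concretely, I would use the lattice points $\bsy^{(i)}=\bsy^{(i)}_{\bsDelta}$ from~\eqref{eq:qmc1}, square inequality \eqref{eq:general}, and take the expectation over the random shift $\bsDelta$. Since $\sup_{\|G\|_{Z'}\le 1}\|G(g)\|_{\calW_s}$ does not depend on $\bsDelta$, this separates the two factors:
\begin{align*}
 \bbE_\bsDelta \Big\|\!\int_{U_s}\! g(\bsy)\,\rd\bsy - \frac{1}{n}\sum_{i=1}^n g(\bsy^{(i)}_{\bsDelta})\Big\|_Z^2
 \,\le\, \bbE_\bsDelta\big[e^{\rm wor}(Q_{s,n};\calW_s)^2\big]\,
 \sup_{\satop{G\in Z'}{\|G\|_{Z'}\le 1}}\!\|G(g)\|_{\calW_s}^2.
\end{align*}
The standard CBC construction for the shift-averaged squared worst case error in this space (see, e.g., \cite[Theorem~5.1 and Section~5]{kuonuyenssurvey}) then delivers a generating vector such that
\begin{align*}
 \bbE_\bsDelta\big[e^{\rm wor}(Q_{s,n};\calW_s)^2\big]
 \,\le\, \bigg(\!\!\sum_{\emptyset\ne\setu\subseteq\{1:s\}}\!\!\!\!\!\gamma_\setu^\lambda\,\Big(\tfrac{2\zeta(2\lambda)}{(2\pi^2)^\lambda}\Big)^{|\setu|}\bigg)^{\!\!1/\lambda}\!\![\phi_{\rm tot}(n)]^{-1/\lambda}
\end{align*}
for each $\lambda\in(\tfrac12,1]$, with $1/\phi_{\rm tot}(n)\le 2/n$ whenever $n$ is a prime power.

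It remains to bound the second factor using the regularity hypothesis \eqref{eq:ass-u}. For any $G\in Z'$ with $\|G\|_{Z'}\le 1$, boundedness and linearity of $G$ together with the commutation of $G$ with Bochner differentiation yield $\frac{\partial^{|\setu|}}{\partial \bsy_\setu}G(g(\bsy)) = G\big(\frac{\partial^{|\setu|}}{\partial \bsy_\setu}g(\bsy)\big)$, so that pointwise $|\frac{\partial^{|\setu|}}{\partial \bsy_\setu}G(g(\bsy))|\le C_0(|\setu|+r_1)!\prod_{j\in\setu}(r_2\rho_j)$ by \eqref{eq:ass-u}. Applying the Cauchy--Schwarz (or Jensen) inequality to the inner integral in each summand of $\|\cdot\|_{\calW_s}^2$ and then using the unit volume of $U_s$ yields
\begin{align*}
 \sup_{\satop{G\in Z'}{\|G\|_{Z'}\le 1}}\|G(g)\|_{\calW_s}^2
 \,\le\, C_0^2 \sum_{\setu\subseteq\{1:s\}}\frac{[(|\setu|+r_1)!]^2\,\prod_{j\in\setu}(r_2\rho_j)^2}{\gamma_\setu},
\end{align*}
and multiplying the two bounds reproduces $C_{s,\bsgamma,\lambda}$ as defined in \eqref{eq:lattice-bound}.

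The only genuinely non-routine step is the passage from scalar-valued lattice theory to the $Z$-valued setting, and this is precisely what was secured by \eqref{eq:general}: once that inequality is in hand, the remaining ingredients (CBC bound, pointwise commutation of $G$ with derivatives of a $Z$-valued smooth map, and the uniform derivative estimate \eqref{eq:ass-u}) fit together without loss because the regularity bound is stated exactly in terms of mixed first-order derivatives with respect to the coordinates in $\setu$, matching the form of the weighted Sobolev norm $\|\cdot\|_{\calW_s}$.
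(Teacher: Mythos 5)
Your proposal is correct and follows essentially the same route as the paper: specialize Theorem~\ref{thm:gencub} with $\calW_s$ the weighted unanchored Sobolev space, invoke the standard shift-averaged CBC error bound, and bound $\sup_{\|G\|_{Z'}\le 1}\|G(g)\|_{\calW_{s,\bsgamma}}^2$ by commuting $G$ with the mixed derivatives and applying \eqref{eq:ass-u}. The paper's proof does exactly this, including the Jensen-type step of dropping the inner (anchoring) integral in the norm.
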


\begin{proof}
We consider randomly shifted lattice rules in the unanchored weighted
Sobolev space $\calW_{s,\bsgamma}$ with norm
\begin{align*}
 \|F\|_{\calW_{s,\bsgamma}}^2
 :=& \!\sum_{\setu\subseteq\{1:s\}} \!\frac{1}{\gamma_\setu}
 \int_{[-\frac{1}{2},\frac{1}{2}]^{|\setu|}} \Big|
 \int_{[-\frac{1}{2},\frac{1}{2}]^{s-|\setu|}}\! \frac{\partial^{|\setu|}}{\partial \bsy_\setu}
 F(\bsy_\setu;\bsy_{\{1:s\}\setminus\setu})\, \rd\bsy_{\{1:s\}\setminus\setu} \Big|^2 \rd\bsy_\setu \\
 \le& \!\sum_{\setu\subseteq\{1:s\}} \frac{1}{\gamma_\setu}
 \int_{U_s} \Big| \frac{\partial^{|\setu|}}{\partial \bsy_\setu}
 F(\bsy)\, \Big|^2 \rd\bsy.
\end{align*}
It is known that CBC construction yields a lattice generating vector
satisfying
\[
  \bbE_\bsDelta [e^{\rm wor}(Q_{s,n};\calW)]^2
  \,\le\, \Big( \frac{1}{\phi_{\rm tot}(n)} \sum_{\emptyset\ne\setu\subseteq\{1:s\}}\!\! \gamma_{\setu}^\lambda
  \Big(\frac{2\zeta(2\lambda)}{(2\pi^2)^\lambda} \Big)^{|\setu|}\Big)^{\frac{1}{\lambda}}
  \,\,\mbox{for all}\,\, \lambda\in (\tfrac{1}{2},1].
\]

We have from \eqref{eq:general} that
\begin{align*}
    \bbE_\bsDelta \Big\|\int_{U_s} g(\bsy) \,\rd\bsy
  - \frac{1}{n} \sum_{i=1}^n g(\bsy^{(i)}) \Big\|_{Z}^2
  \,\le\, \bbE_\bsDelta [e^{\rm wor}(Q_{s,n};\calW)]^2
  \!\sup_{\stackrel{G\in Z'}{\|G\|_{Z'}\leq 1}}\! \|G(g)\|_{\calW_{s,\bsgamma}}^2.
\end{align*}
Using the definition of the $\calW_{s,\bsgamma}$-norm, we have
\begin{align*}
 &\|G(g)\|_{\calW_{s,\bsgamma}}^2
 \le \sum_{\setu\subseteq\{1:s\}} \frac{1}{\gamma_{\setu}}
 \int_{U_s} \Big| \frac{\partial^{|\setu|}}{\partial \bsy_{\setu}}  G(g(\bsy))\Big|^2 \, \rd\bsy\\
 &= \sum_{\setu\subseteq\{1:s\}} \frac{1}{\gamma_{\setu}}
 \int_{U_s} \Big| G\Big( \frac{\partial^{|\setu|}}{\partial \bsy_{\setu}}  g(\bsy)\Big)\Big|^2 \, \rd\bsy
 \le \!\sum_{\setu\subseteq\{1:s\}} \!\frac{1}{\gamma_{\setu}}
 \int_{U_s} \|G\|_{Z'}^2\, \Big\| \frac{\partial^{|\setu|}}{\partial \bsy_{\setu}} g(\bsy)\Big\|_Z^2\, \rd\bsy\,.
\end{align*}
We can now use the assumption \eqref{eq:ass-u} and combine all of the
estimates to arrive at the required bound.  
\end{proof}
\begin{remark}
    Theorem~\ref{thm:gencub} holds for arbitrary cubature rules, thus similar results to Theorem~\ref{thm:MSE} can be stated for other cubature rules. In particular, the regularity bounds obtained in Sections~\ref{sec:reg-adj} and~\ref{sec:reg-ent} can also be used for worst case error analysis of higher-order QMC quadrature as well as sparse grid integration.
\end{remark}
We now apply Theorem~\ref{thm:MSE} to the second terms in
\eqref{eq:split_linear}, \eqref{eq:splitT} and~\eqref{eq:splitS}.

\begin{theorem} \label{thm:qmc}
Let $\theta>0$, $\alpha_1,\alpha_2 \geq 0$, with $\alpha_1 + \alpha_2 >
0$. Let $f = (z,u_0) \in \calY'$ and $\widehat{u} \in \calX$. For
every $\bsy \in U$ and $s\in\bbN$, let $u^\bsy_s \in \calX$ be the
truncated solution of \eqref{eq:model} and $\Phi^\bsy_s$ be as
in~\eqref{eq:Phi}, and then let $q^\bsy_s \in \calY$ be the truncated
solution of \eqref{eq:weakdual} with $f_{\rm dual}$ given by
\eqref{eq:f-dual2}. Then a randomly shifted lattice rule can be
constructed using a CBC algorithm such that for all $\lambda\in
(\tfrac{1}{2},1]$ we have
\begin{align}
  \bbE_\bsDelta \Big\|\int_{U_s} u_{s}^\bsy\,\rd\bsy
  &- \frac{1}{n} \sum_{i=1}^n u_{s}^{\bsy^{(i)}} \Big\|_{{\calX}}^2
  \le C_{s,\bsgamma,\lambda}\, [\phi_{\rm tot}(n)]^{-1/\lambda},\label{eq:qmcerr1}
  \\
  \bbE_\bsDelta \Big\|\int_{U_s} q_{s}^\bsy\,\rd\bsy
  &- \frac{1}{n} \sum_{i=1}^n q_{s}^{\bsy^{(i)}} \Big\|_{\calY}^2
  \le C_{s,\bsgamma,\lambda}\, [\phi_{\rm tot}(n)]^{-1/\lambda},\label{eq:qmcerr2}
  \\
 \bbE_\bsDelta\|S_s - S_{s,n}\|_{L^2(V;I)}^2
 &\le \bbE_\bsDelta \Big\| \int_{U_s} \exp(\theta\,\Phi_{s}^{\bsy})\,q_s^\bsy \,\rd\bsy
  - \frac{1}{n} \sum_{i=1}^n \exp(\theta\,\Phi_{s}^{\bsy^{(i)}})\,q_s^{\bsy^{(i)}} \Big\|^2_\calY\notag \\
 &\le C_{s,\bsgamma,\lambda}\, [\phi_{\rm tot}(n)]^{-1/\lambda},\label{eq:qmcerr3}
 \\
 \bbE_\bsDelta|T_s - T_{s,n}|^2
 &\le \bbE_\bsDelta \Big| \int_{U_s} \exp(\theta\,\Phi_{s}^{\bsy}) \,\rd\bsy
  - \frac{1}{n} \sum_{i=1}^n \exp(\theta\,\Phi_{s}^{\bsy^{(i)}}) \Big|^2\notag \\
 &\le C_{s,\bsgamma,\lambda}\, [\phi_{\rm tot}(n)]^{-1/\lambda},\label{eq:qmcerr4}
\end{align}
where $\phi_{\rm tot}(n)$ is the Euler totient function, with $1/\phi_{\rm
tot}(n)\le 2/n$ when $n$ is a prime power. Here $C_{s,\bsgamma,\lambda}$
is given by \eqref{eq:lattice-bound}, with $r_1 = 2$, $r_2 = e$, $\rho_j =
b_j$ defined in \eqref{eq:bj}, and $C_0>0$ is independent of $s$, $n$,
$\lambda$ and weights $\bsgamma$ but depends on $u$, $z_0$,
$\widehat{u}$ and other constants.

With the choices
\begin{align*}
 \lambda
 &= \begin{cases}
 \frac{1}{2 - 2\delta} \text{ for all $\delta \in (0,1)$} & \text{if } p\in (0,\frac{2}{3}]\,, \\
 \frac{p}{2-p} & \text{if } p\in (\frac{2}{3},1)\,,
 \end{cases}
 \\
 \gamma_\setu &= \gamma_\setu^* :=
 \bigg( (|\setu|+r_1)!\, \prod_{j \in \setu}
 \frac{r_2\, \rho_j}{\sqrt{2\zeta(2\lambda)/(2\pi^2)^\lambda}} \bigg)^{2/(1+\lambda)}\,,
\end{align*}
we have that $C_{s,\bsgamma^*,\lambda}$ is bounded independently of $s$.
$($However, $C_{s,\bsgamma^*,\frac{1}{2-2\delta}} \to \infty$ as $\delta
\to 0$ and $C_{s,\bsgamma^*,\frac{p}{2-p}} \to \infty$ as $p \to
(2/3)^+$.$)$ Consequently, under the assumption
\eqref{eq:dim_assumption1}, the above three mean-square errors are of
order
\begin{align}
\kappa(n) :=
\begin{cases}\label{eq:kappa}
[\phi_{\rm tot}(n)]^{-(2-2\delta)} \,\,\,\,\text{ for all $\delta \in (0,1)$}& \text{if } p\in (0,\frac{2}{3}]\,,\\
[\phi_{\rm tot}(n)]^{-(2/p -1)} & \text{if } p\in (\frac{2}{3},1)\,.
\end{cases}
\end{align}
\end{theorem}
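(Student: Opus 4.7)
My plan is to apply Theorem~\ref{thm:MSE} separately to each of the four integrands $u_s^\bsy \in \calX$, $q_s^\bsy \in \calY$, $\exp(\theta\,\Phi_s^\bsy)\,q_s^\bsy \in \calY$ (the $L^2(V;I)$ bound in \eqref{eq:qmcerr3} then follows from Remark~\ref{rem:L2-Y}), and $\exp(\theta\,\Phi_s^\bsy) \in \bbR$. To apply the theorem I first need to rewrite the existing regularity results in the form \eqref{eq:ass-u}: given $\setu \subseteq \{1{:}s\}$, I take $\bsnu = \sum_{j\in\setu} \bse_j$ (the $0/1$-indicator multi-index), so that $|\bsnu|=|\setu|$, $\bsb^{\bsnu}=\prod_{j\in\setu}b_j$, and $\partial_\bsy^{\bsnu}=\partial^{|\setu|}/\partial\bsy_\setu$. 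From Lemma~\ref{lem:statebound}, Theorem~\ref{thm:adjregularity}, Theorem~\ref{thm:expbound}, and Theorem~\ref{thm:expbound-q1} I obtain bounds of the shape $C_0\,(|\setu|+r_1)!\,\prod_{j\in\setu}(r_2\,b_j)$ with $(r_1,r_2)=(0,1),(1,1),(0,e),(2,e)$ respectively. Using $(|\setu|+k)! \le (|\setu|+2)!$ for $k\le 2$ and $b_j \le e\,b_j$, all four bounds are uniformly dominated by the single form with $r_1=2$, $r_2=e$, $\rho_j=b_j$, with the numerical prefactors absorbed into a generic constant $C_0$ that depends on $f$, $\widehat u$, $\alpha_1,\alpha_2,\beta_1,\|E_T\|_{\calX\to L^2(D)}$ and $\theta$, but not on $s$, $n$, $\lambda$ or $\bsgamma$.

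With this unified regularity in place, Theorem~\ref{thm:MSE} immediately delivers the four mean-square bounds \eqref{eq:qmcerr1}--\eqref{eq:qmcerr4}, each of the form $C_{s,\bsgamma,\lambda}\,[\phi_{\rm tot}(n)]^{-1/\lambda}$, valid for any $\lambda\in(\tfrac12,1]$ and any positive weights $(\gamma_\setu)$. The next step is to optimize $\bsgamma$ and $\lambda$ so that $C_{s,\bsgamma,\lambda}$ stays bounded as $s\to\infty$. Minimising the product of the two $\setu$-sums in \eqref{eq:lattice-bound} by a standard Lagrange argument (see e.g.~\cite{KSS2012,kuonuyenssurvey}) yields the SPOD choice $\gamma_\setu^\ast$ stated in the theorem, which exactly balances the two factors.

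Substituting $\gamma_\setu^\ast$ back, both $\setu$-sums factorise into products over $j\ge 1$ (up to a combinatorial factor from $(|\setu|+r_1)!$ handled as in the proof of Theorem~\ref{thm:dim_trunc}), and dimension-independent boundedness of $C_{s,\bsgamma^\ast,\lambda}$ reduces to the convergence of $\sum_{j\ge 1} b_j^{2\lambda/(1+\lambda)}$. Under \eqref{eq:dim_assumption1} this series is finite precisely when $\tfrac{2\lambda}{1+\lambda}\ge p$, that is, $\lambda \ge \tfrac{p}{2-p}$. Since the convergence rate $[\phi_{\rm tot}(n)]^{-1/\lambda}$ is fastest for $\lambda$ as small as possible, the optimal admissible choice is $\lambda = \tfrac{p}{2-p}$ when $p\in(\tfrac23,1)$, yielding rate $[\phi_{\rm tot}(n)]^{-(2/p-1)}$; when $p\in(0,\tfrac23]$ the constraint $\lambda \ge \tfrac{p}{2-p}$ is dominated by $\lambda>\tfrac12$, so we may pick $\lambda = \tfrac{1}{2-2\delta}$ for arbitrarily small $\delta\in(0,1)$, giving rate $[\phi_{\rm tot}(n)]^{-(2-2\delta)}$. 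Together these cases furnish $\kappa(n)$ in \eqref{eq:kappa}.

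The main technical obstacle is the bookkeeping that compresses four distinct regularity bounds into one uniform statement compatible with Theorem~\ref{thm:MSE}; once that is done, the remainder is a direct application of the now-standard SPOD-weight analysis. The blow-up of the constant as $\delta\to 0$ or $p\to(\tfrac23)^{+}$ is a consequence of $\zeta(2\lambda)\to\infty$ as $\lambda\to\tfrac12^{+}$, and I would simply record it in the statement without further comment.
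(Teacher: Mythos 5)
Your proposal is correct and follows essentially the same route as the paper's own proof: apply Theorem~\ref{thm:MSE} with the regularity bounds of Lemma~\ref{lem:statebound}, Theorem~\ref{thm:adjregularity}, Theorem~\ref{thm:expbound} and Theorem~\ref{thm:expbound-q1} (restricted to $0/1$ multi-indices and unified by taking the largest values $r_1=2$, $r_2=e$, with Remark~\ref{rem:L2-Y} handling the $L^2(V;I)$ bound in \eqref{eq:qmcerr3}), then insert the minimizing weights $\gamma_\setu^*$ (the paper cites \cite[Lemma~6.2]{KSS2012} where you invoke the standard Lagrange argument) and deduce the admissible range of $\lambda$ from the summability condition $2\lambda/(1+\lambda)\ge p$ with $\lambda<1$, yielding \eqref{eq:kappa}. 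No substantive differences from the paper's argument.
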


\begin{proof}
The mean-square error bounds are a corollary of Theorem~\ref{thm:MSE} by
applying the regularity bounds in Lemma~\ref{lem:statebound}, Theorem~\ref{thm:adjregularity},
Theorem~\ref{thm:expbound-q1} and Theorem~\ref{thm:expbound}. For
simplicity we set $C_0$, $r_1$ and $r_2$ to be the largest values arising
from the four results.

We know from \cite[Lemma~6.2]{KSS2012} that for any $\lambda$,
$C_{s,\bsgamma,\lambda}$ defined in \eqref{eq:lattice-bound} is minimized
by $\gamma_\setu = \gamma_\setu^*$. By inserting $\bsgamma^*$ into
$C_{s,\bsgamma,\lambda}$ we can then derive the condition $p <
\frac{2\lambda}{1+\lambda} <1$ for which $C_{s,\bsgamma^*,\lambda}$ is
bounded independently of $s$. This condition on $\lambda$, together with
$\lambda \in (\frac{1}{2},1]$ and $p \in (0,1)$ yields the result.  
\end{proof}

\subsection{Combined error bound}

Combining the results in this section gives the final theorem.

\begin{theorem}
Let $\alpha_1,\alpha_2 \geq 0$ and $\alpha_3>0$ with $\alpha_1+\alpha_2 >0$, and let the risk measure $\calR$ be either the expected value or the entropic risk measure with $\theta >0$. Denote by $z^*$ the unique solution of \eqref{eq:reduced_OCuU} and by $z^*_{s,n}$ the unique solution of the truncated problem using a randomly shifted lattice rule as approximation of the risk measure.  Then, if \eqref{eq:dim_assumption1} and \eqref{eq:dim_assumption2} hold, we have
\begin{align*}
 \sqrt{\bbE_\bsDelta \|z^* - z^*_{s,n}\|_{L^2(V';I)}^2}
 \leq C 
 \Big(s^{-2/p+1} + \sqrt{\kappa(n)}\Big)\,,
\end{align*}
where $\kappa(n)$ is given in \eqref{eq:kappa}, and the constant $C>0$
is independent of $s$ and $n$ but depends on $z$, $u_0$, $\widehat{u}$ and
other constants.
\end{theorem}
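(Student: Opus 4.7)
The strategy is to treat the two risk measures in parallel, in each case starting from the stability estimate
\[
 \|z^*-z_{s,n}^*\|_{L^2(V';I)} \,\le\, \frac{1}{\alpha_3}\,\|J'(z^*)-J'_{s,n}(z^*)\|_{L^2(V;I)}
\]
already derived at the start of Section~\ref{sec:analysis}. Taking expectations over the random shift, squaring, and then separating the contribution of the dimension truncation from the contribution of the randomly shifted lattice rule gives a clean triangle inequality that reduces the theorem to the bounds already obtained in Theorem~\ref{thm:trunc} and Theorem~\ref{thm:qmc}.

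For the expected value, I would start from \eqref{eq:split_linear} and invoke Remark~\ref{rem:L2-Y} to replace the $L^2(V;I)$-norm of $q_1^\bsy-q_{1,s}^\bsy$ and of $\int q_{1,s}^\bsy-\tfrac1n\sum q_{1,s}^{\bsy^{(i)}}$ by the corresponding $\calY$-norms of $q^\bsy-q_s^\bsy$ and of $\int q_s^\bsy-\tfrac1n\sum q_s^{\bsy^{(i)}}$. Then the second bound of Theorem~\ref{thm:trunc} controls the truncation term by $C\,s^{-2/p+1}$, and \eqref{eq:qmcerr2} of Theorem~\ref{thm:qmc} controls the QMC term by $C\,\kappa(n)$. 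Taking square roots and using $\sqrt{a+b}\le\sqrt{a}+\sqrt{b}$ gives the stated estimate.

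For the entropic risk measure, I would start from the decomposition leading to \eqref{eq:split}, \eqref{eq:splitT} and \eqref{eq:splitS}, which bounds $\bbE_\bsDelta\|S/T-S_{s,n}/T_{s,n}\|_{L^2(V;I)}^2$ by a sum of four terms: $|T-T_s|^2$, $\bbE_\bsDelta|T_s-T_{s,n}|^2$, $\|S-S_s\|_{L^2(V;I)}^2$ (again bounded by $\|S-S_s\|_\calY^2$ via Remark~\ref{rem:L2-Y}) and $\bbE_\bsDelta\|S_s-S_{s,n}\|_{L^2(V;I)}^2$. Theorem~\ref{thm:trunc} supplies $O(s^{-2/p+1})$ for the two truncation terms, while \eqref{eq:qmcerr3} and \eqref{eq:qmcerr4} in Theorem~\ref{thm:qmc} supply $O(\kappa(n))$ for the two QMC terms. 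Multiplying through by $1/\alpha_3^2$ (and, for the entropic case, absorbing the factor $\mu$ and using $T,T_{s,n}\ge 1$ as already exploited in \eqref{eq:split}) and taking square roots yields the claimed bound with a single constant~$C$.

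The main obstacle is purely bookkeeping: one must make sure that the constants absorbed into $C$ in each of Theorems~\ref{thm:trunc} and~\ref{thm:qmc} depend only on $z$, $u_0$, $\widehat{u}$, the sequence $\bsb$, $\beta_1$, $\|E_T\|_{\calX\to L^2(D)}$, and $\theta$ (in the entropic case via $\sigma$ and $\mu$), but not on $s$ or $n$. This follows directly from the form of the constants in Lemma~\ref{lem:Psibound}, Theorem~\ref{thm:adjregularity}, Theorem~\ref{thm:expbound} and Theorem~\ref{thm:expbound-q1}, together with the $s$-independence of $C_{s,\bsgamma^*,\lambda}$ established at the end of Theorem~\ref{thm:qmc} under assumption \eqref{eq:dim_assumption1}. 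No further analytic difficulty arises; the proof is then completed by inserting $\kappa(n)$ from \eqref{eq:kappa}.
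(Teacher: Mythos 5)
Your proposal is correct and follows essentially the same route as the paper: the paper's own proof is exactly the combination of the stability estimate and the splittings \eqref{eq:split_linear}--\eqref{eq:splitS}, together with Remark~\ref{rem:L2-Y}, Theorem~\ref{thm:trunc} and Theorem~\ref{thm:qmc}. Your bookkeeping of the constants (via $\mu$, $\sigma$, $T,T_{s,n}\ge 1$ and the $s$-independence of $C_{s,\bsgamma^*,\lambda}$) matches what the paper implicitly absorbs into $C$.
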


\begin{proof}
This follows from \eqref{eq:split_linear}--\eqref{eq:splitS},
Remark~\ref{rem:L2-Y}, Theorem~\ref{thm:trunc} and
Theorem~\ref{thm:qmc}.  
\end{proof}

\section{Numerical experiments} \label{sec:num}

We consider the coupled PDE system
\begin{align}
&\begin{cases}
\partial_t u^\bsy(\bsx,t)-\nabla \cdot (a^\bsy (\bsx)\nabla u^\bsy (\bsx,t))=z(\bsx,t)\\
u^\bsy(\bsx,t)=0\\
u^\bsy(\bsx,0)=u_0(\bsx)
\end{cases}\label{eq:coupled1}
\intertext{and}
&\begin{cases}
-\partial_t q^\bsy(\bsx,t)-\nabla \cdot (a^\bsy (\bsx)\nabla q^\bsy (\bsx,t))=\alpha_1R_V(u^\bsy (\bsx,t)-\widehat{u}(\bsx,t))\\
q^\bsy(\bsx,t)=0\\
q^\bsy(\bsx,T)=\alpha_2(u^\bsy(\bsx,T)-\widehat{u}(\bsx,T)),
\end{cases}\label{eq:coupled2}
\end{align}
where the first equations in \eqref{eq:coupled1} and \eqref{eq:coupled2} hold for $\bsx \in D$, $t\in I$, $\bsy\in U$, the second equations hold for $\bsx \in \partial D$, $t\in I$, $\bsy\in U$, and the last equations hold for $\bsx\in D$ and $\bsy\in U$. We fix the physical domain $D=(0,1)^2$ and the terminal time $T=1$. The uncertain diffusion coefficient, defined as in~\eqref{eq:axy}, is independent of $t$, and parameterized in all experiments with mean field $a_0(\bsx)\equiv 1$ and the fluctuations
$$
\psi_j(\bsx)=\frac12 j^{-\vartheta}\sin(\pi jx_1)\sin(\pi j x_2)\quad\text{for }\vartheta>1~\text{and}~j\in \mathbb N.
$$
We use the implicit Euler finite difference scheme with step size $\Delta t=\frac{T}{500}=2\cdot10^{-3}$ to discretize the PDE system with respect to the temporal variable. The spatial part of the PDE system is discretized using a first order finite element method with mesh size $h=2^{-5}$ and the Riesz operator in the loading term corresponding to~\eqref{eq:coupled2} can be evaluated using~\eqref{eq:RV}. In all experiments, the lattice rules are generated using the fast CBC algorithm with weights chosen as in Theorem~\ref{thm:qmc}, where we used the parameter value $\beta_1=1$ in~\eqref{eq:bj}.

In the numerical experiments presented in Subsections~\ref{sec:num_dimtrunc}--\ref{sec:num_oc}, we choose
\begin{align*}
\widehat{u}(\bsx,t) &:=
 \chi_{\|\bsx-(c_1(t),c_2(t))\|_{\infty}\leq \frac{1}{10}}(\bsx)\,\widehat{u}_1(\bsx,t)\\
 &\qquad+ \chi_{\|\bsx+(c_1(t),c_2(t))-(1,1)\|_{\infty}\leq \frac{1}{10}}(\bsx)\,\widehat{u}_2(\bsx,t),
\end{align*}
where
\begin{align*}
&\widehat{u}_1(\bsx,t):=10240\bigg(x_1-c_1(t)-\frac{1}{10}\bigg)\bigg(x_2-c_2(t)-\frac{1}{10}\bigg)\\
&\qquad\qquad\quad\times \bigg(x_1-c_1(t)+\frac{1}{10}\bigg)\bigg(x_2-c_2(t)+\frac{1}{10}\bigg),\\
&\widehat{u}_2(\bsx,t):=10240\bigg(x_1+c_1(t)-\frac{11}{10}\bigg)\bigg(x_2+c_2(t)-\frac{11}{10}\bigg)\\
&\qquad\qquad\quad\times \bigg(x_1+c_1(t)-\frac{9}{10}\bigg)\bigg(x_2+c_2(t)-\frac{9}{10}\bigg),\\
&c_1(t):=\frac12 + \frac14 (1-t^{10})\cos(4\pi t^2)\quad\text{and}\quad c_2(t):=\frac12
+ \frac14 (1-t^{10})\sin(4\pi t^2).
\end{align*}
As the parameters appearing in the objective functional~\eqref{eq:objective} and adjoint equation~\eqref{eq:coupled2}, we use $\alpha_1=10^{-3}$, $\alpha_2=10^{-2}$, and $\alpha_3=10^{-7}$. Moreover, we always use $$u_0(\bsx)=\sin(2\pi x_1)\sin(2\pi x_2).$$

In Subsections~\ref{sec:num_dimtrunc} and~\ref{sec:num_qmc}, we fix the source term $$z(\bsx,t)=10x_1(1-x_1)x_2(1-x_2)$$
to assess the dimension truncation and QMC errors.

All computations were carried out on the computational cluster Katana supported by Research Technology Services at UNSW Sydney~\cite{katana}.
\subsection{Dimension truncation error}\label{sec:num_dimtrunc}
The dimension truncation errors in Theorem~\ref{thm:trunc} are estimated by approximating the quantities
$$
\bigg\|\int_{U_{s'}}(u_{s'}^\bsy-u_s^\bsy)\,{\rm d}\bsy\bigg\|_{L^2(V;I)}\quad\text{and}\quad\bigg\|\int_{U_{s'}}(q_{s'}^\bsy-q_s^\bsy)\,{\rm d}\bsy\bigg\|_{L^2(V;I)}
$$
as well as
$$
\|S_{s'}-S_s\|_{L^2(V;I)}\quad\text{and}\quad |T_{s'}-T_s|
$$
for $s'\gg s$, by using a tailored lattice cubature rule generated using the fast CBC algorithm with $n=2^{15}$ nodes and a single fixed random shift to compute the high-dimensional parametric integrals. The obtained results are displayed in Figures~\ref{fig:dimtrunc0} and~\ref{fig:dimtrunc} for the fluctuations $(\psi_j)_{j\geq 1}$ corresponding to decay rates $\vartheta\in\{1.3,2.6\}$ and dimensions $s\in\{2^k\mid k\in\{1,\ldots,9\}\}$. We use $\theta=10$ in the computations corresponding to $S_s$ and $T_s$. As the reference solution, we use the solutions corresponding to dimension $s'=2048=2^{11}$.

\begin{figure}[!t]
\includegraphics[height=.34\textwidth]{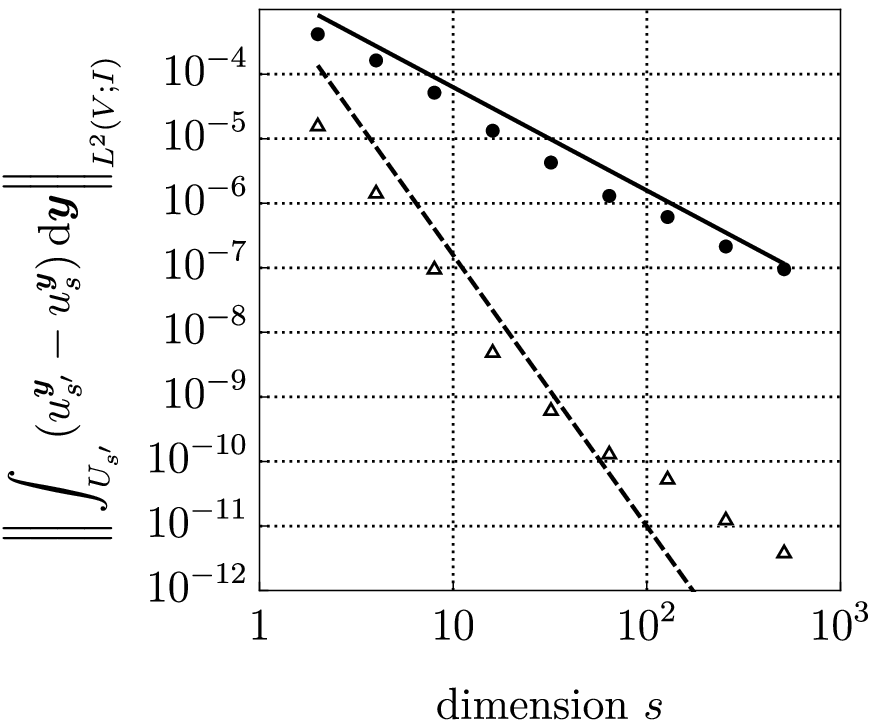}\includegraphics[trim= 0cm 0cm 3.5cm 0cm,clip,height=.34\textwidth]{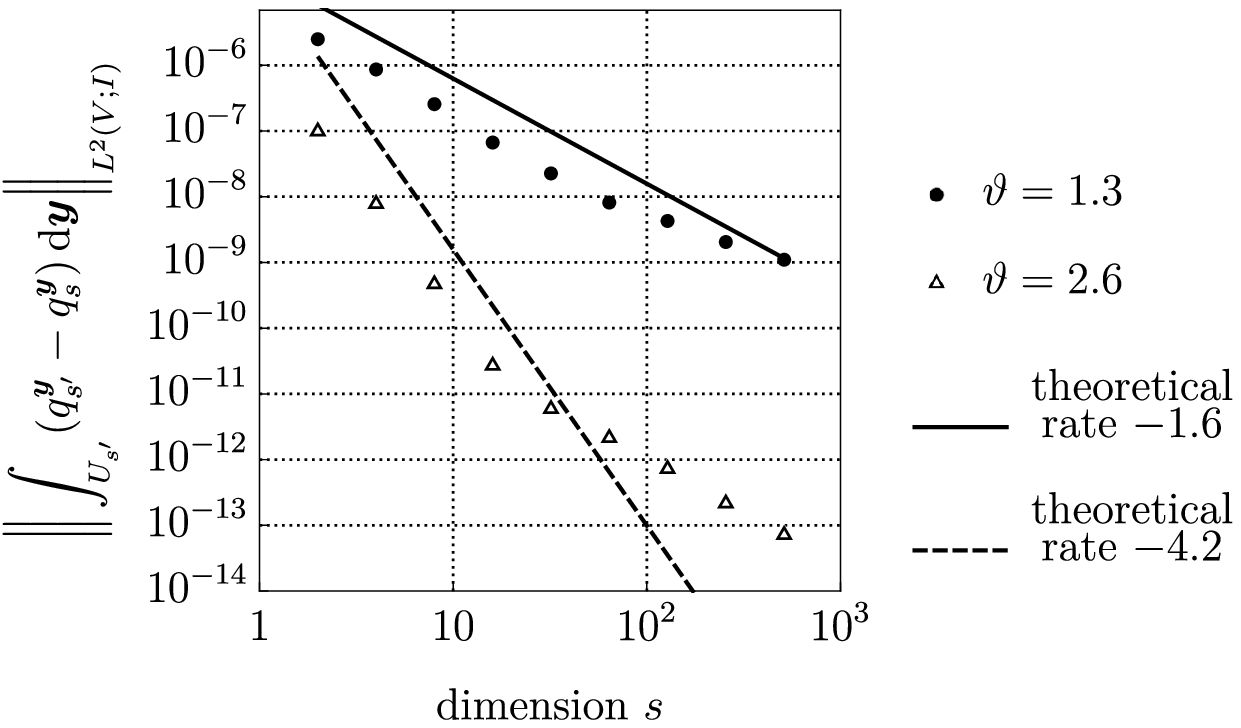}\includegraphics[trim=9.225cm 0cm 0cm 0cm,clip,height=.35\textwidth]{fig/dimtrunc_adjoint.eps}
\caption{The approximate dimension truncation errors corresponding to the state and adjoint PDEs.}\label{fig:dimtrunc0}
\end{figure}

\begin{figure}[!t]
\hspace*{.48cm}\includegraphics[height=.34\textwidth]{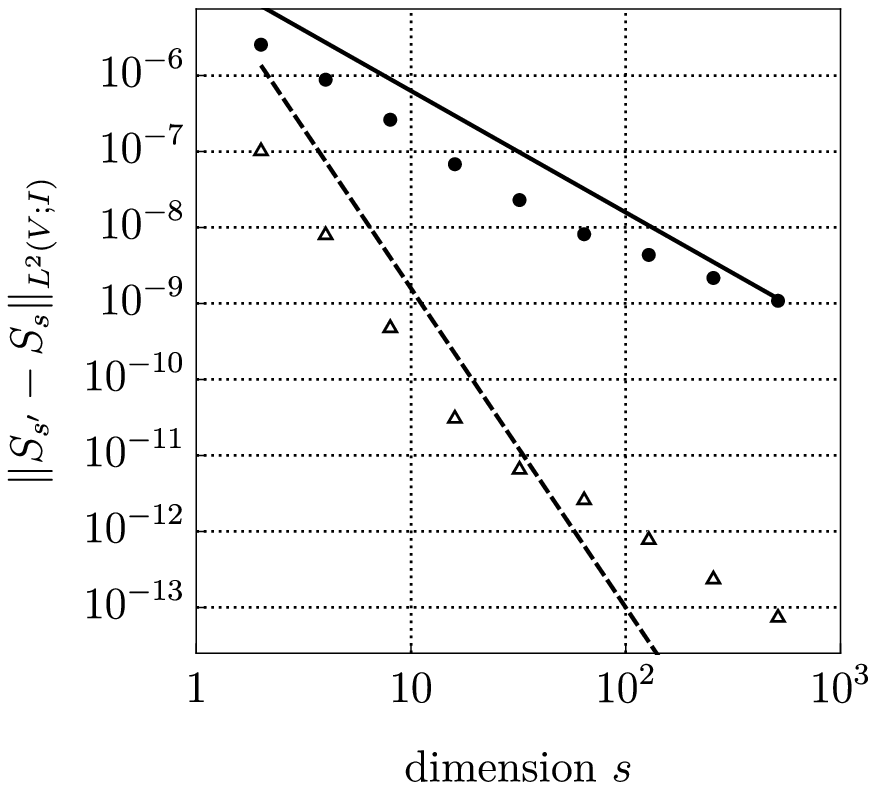}\hspace*{.43cm}\includegraphics[trim= 0cm 0cm 3.5cm 0cm,clip,height=.34\textwidth]{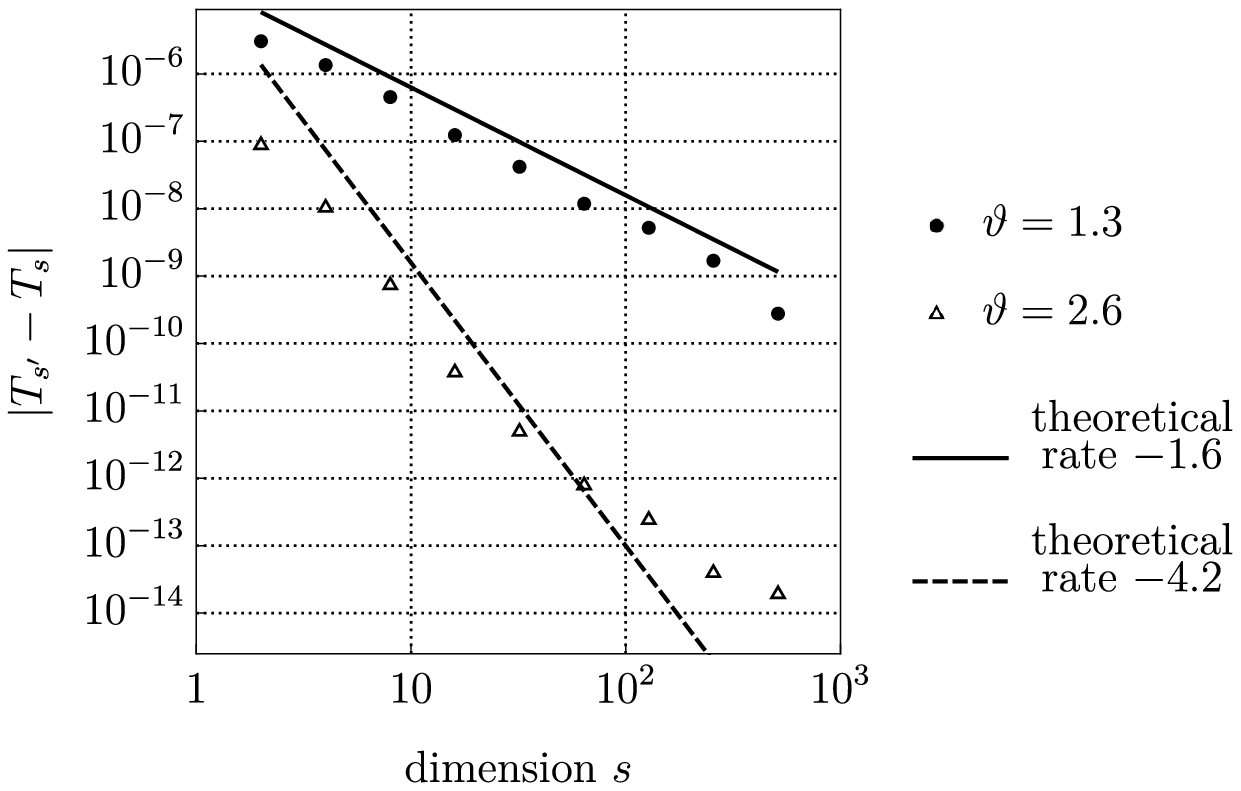}\includegraphics[trim=9.225cm 0cm 0cm 0cm,clip,height=.35\textwidth]{fig/dimtrunc_adjoint.eps}
\caption{The approximate dimension truncation errors corresponding to $\|S_{s'}-S_s\|_{L^2(V;I)}$ and $|T_{s'}-T_s|$.}\label{fig:dimtrunc}
\end{figure}

The theoretical dimension truncation rate is readily observed in the case $\vartheta=1.3$. We note in the case $\vartheta=2.6$ that the dimension truncation convergence rates degenerate for large values of $s$. This may be explained by the fact that the QMC cubature with $n=2^{15}$ nodes has an error around $10^{-8}$ (see Figure~\ref{fig:qmcerr} in Subsection~\ref{sec:num_qmc}), but the finite element discretization error may also be a contributing factor. For smaller values of $s$, the higher order convergence is also apparent in the case $\vartheta=2.6$.

\subsection{QMC error}\label{sec:num_qmc}

We investigate the QMC error rate by computing the root-mean-square approximations
\begin{align*}
&\sqrt{\frac{1}{R(R-1)}\sum_{r=1}^R \|(\overline{Q}-Q^{(r)})(u_s^{\cdot})\|_{L^2(V;I)}^2},\\
&\sqrt{\frac{1}{R(R-1)}\sum_{r=1}^R \|(\overline{Q}-Q^{(r)})(q_s^{\cdot})\|_{L^2(V;I)}^2},\\
&\sqrt{\frac{1}{R(R-1)}\sum_{r=1}^R \|(\overline{Q}-Q^{(r)})(\exp(\Phi_s^{\cdot})\,q_s^{\cdot})\|_{L^2(V;I)}^2},\\
&\sqrt{\frac{1}{R(R-1)}\sum_{r=1}^R |(\overline{Q}-Q^{(r)})(\exp(\Phi_s^{\cdot}))|^2},
\end{align*}
corresponding to~\eqref{eq:qmcerr1}--\eqref{eq:qmcerr4}, where $\overline{Q}$ and $Q^{(r)}$ are as in~\eqref{eq:qmc2} for a randomly shifted lattice rule with cubature nodes~\eqref{eq:qmc1}, where the random shift $\boldsymbol \Delta$ is drawn from $U([0,1]^s)$. As the generating vector, we use lattice rules constructed using the fast CBC algorithm with $n=2^m$, $m\in\{4,\ldots,15\}$, lattice points and $R=16$ random shifts, and $s=100$. We carry out the experiments using two different decay rates $\vartheta\in\{1.3,2.6\}$ for the input random field. The results are displayed in Figure~\ref{fig:qmcerr}. The root-mean-square error converges at a linear rate in all experiments, which is consistent with the theory.

\begin{figure}[H]
\begin{center}
\includegraphics[trim= 0cm 0cm 3.4cm 0cm,clip,height=.34\textwidth]{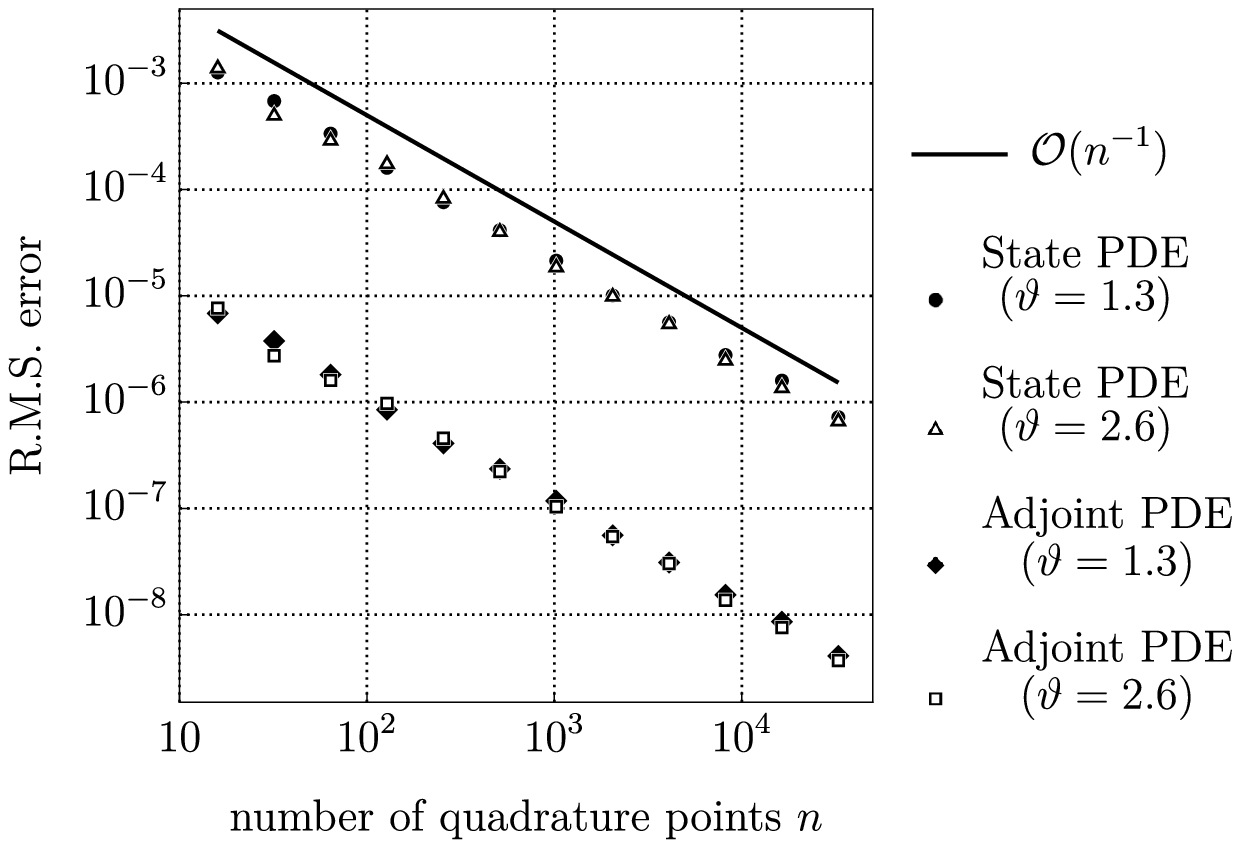}
\!\!\!\!\includegraphics[trim= 9cm 0cm 0cm 0cm,clip,height=.36\textwidth]{fig/qmcerr_stateadjoint.eps}
\!\includegraphics[trim= 0cm 0cm 3.2cm 0cm,clip,height=.34\textwidth]{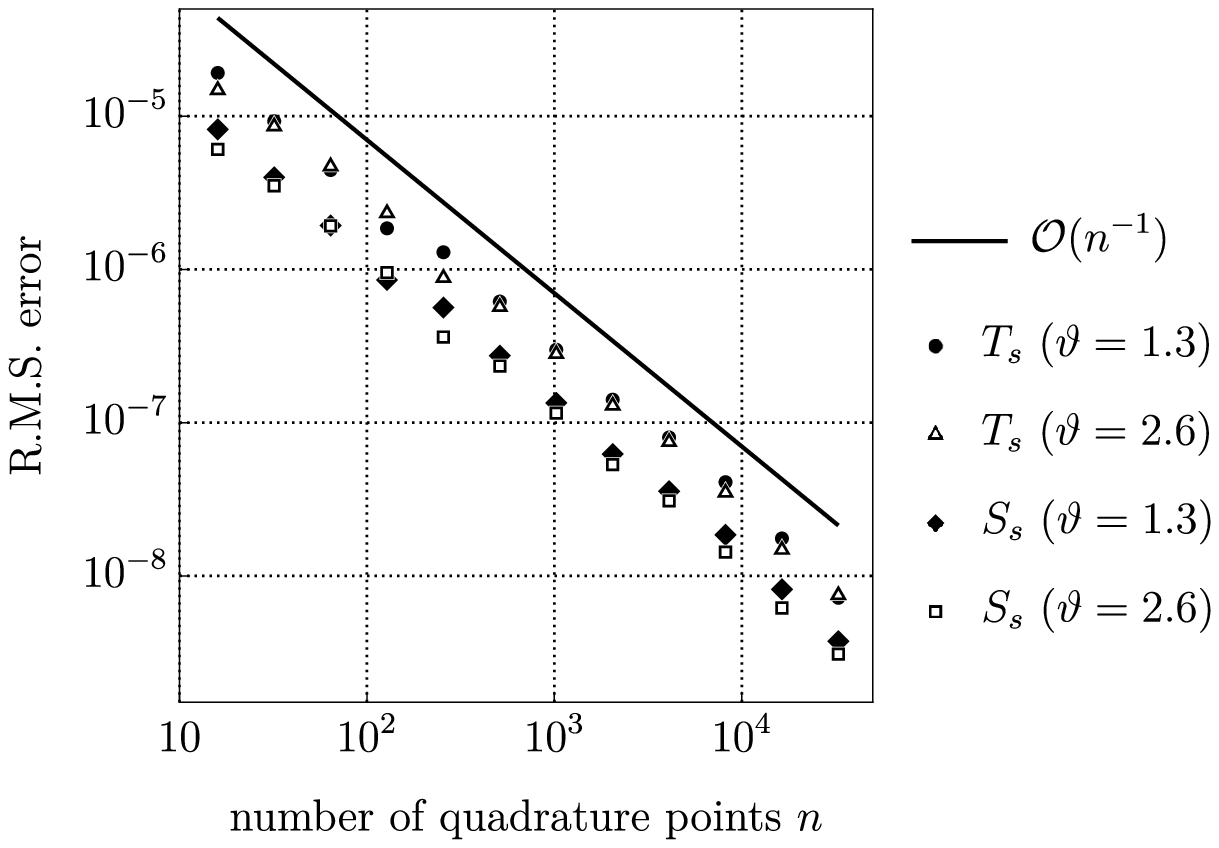}
\!\!\!\!\includegraphics[trim= 9cm 0cm 0cm 0cm,clip,height=.36\textwidth]{fig/qmcerr_entropic.eps}
\caption{Left: The approximate root-mean-square error for QMC approximation of the integrals $\int_{U_s}u_s^\bsy\,{\rm d}\bsy$ and $\int_{U_s}q_s^\bsy\,{\rm d}\bsy$. Right: The approximate root-mean-square error for QMC approximation of quantities $S_s$ and $T_s$. All computations were carried out using $R=16$ random shifts, $n=2^m$, $m\in\{4,\ldots,15\}$, and dimension $s=100$.}\label{fig:qmcerr}
\end{center}
\end{figure}

\subsection{Optimal control problem}\label{sec:num_oc}

We consider the problem of finding the optimal control $z\in \mathcal Z$
that minimizes~\eqref{eq:objective} subject to the PDE
constraint~\eqref{eq:model}. We consider constrained optimization over $\mathcal Z=\{z\in L^2(V';I):\|z\|_{L^2(V';I)}\leq 2\}$ and compare our results with the reconstruction obtained by carrying out unconstrained optimization over $\mathcal Z=L^2(V';I)$. To this end, we define the projection operator
$$
\mathcal{P}(w):=\min\bigg\{1,\frac{2}{\|w\|_{L^2(V;I)}}\bigg\}w\quad\text{for}~w\in L^2(V;I)
$$
which is used in the constrained setting, while in the unconstrained setting we use $\mathcal{P}:=\mathcal I_{L^2(V;I)}$. The operator $\mathcal{P}$ acts on $L^2(V;I)$ and hence it is different from the operator $P_{\mathcal{Z}}$ introduced in Section~\ref{sec:opt-cond}, which projects onto $\mathcal{Z}$.

To be able to handle elements of $\mathcal Z$ numerically, we apply the projected gradient method (see, e.g.,~\cite{hinze2009optimization}) as described in Algorithm~\ref{alg:projgd} together with the projected Armijo rule stated in Algorithm~\ref{alg:armijo}. Note that evaluating $J(R_Vw)$ and $J'(R_Vw)$ in Algorithms~\ref{alg:projgd} and~\ref{alg:armijo} requires solving the state PDE with the source term $R_Vw$. In particular, the Riesz operator appears in the loading term after finite element discretization and can thus be evaluated using~\eqref{eq:RV}. We use the initial guess $w_0=0$. The parameters of the gradient descent method were chosen to be $\eta_0=100$, $\gamma=10^{-4}$, and $\beta=0.1$.

We consider the entropic risk measure with $\theta=10$ and set $\vartheta = 1.3$. The reconstructed optimal control obtained using the bounded set of feasible controls $\mathcal Z$ is displayed in Figure~\ref{fig:oc}. The reconstructed optimal control at the terminal time $T=1$ and its pointwise difference to the control obtained without imposing control constraints are displayed in Figure~\ref{fig:oc2}. Finally, the evolution of the objective functional as the number of gradient descent iterations increases is plotted in Figure~\ref{fig:oc3} for the constrained and unconstrained optimization problems.

\begin{algorithm}[!t]\normalsize
\caption{Projected gradient descent}
\label{alg:projgd}
Input: feasible starting value $w\in L^2(V;I)$ such that $z=R_Vw \in \mathcal Z$

\begin{algorithmic}[1]
\WHILE{$\| w - \mathcal{P}(w - J'(R_Vw)) \|_{L^2(V;I)} >$TOL}
\STATE{find step size $\eta$ using Algorithm~\ref{alg:armijo}}
\STATE{set $w := \mathcal{P}(w - \eta J'(R_Vw))$}
\ENDWHILE
\end{algorithmic}
\end{algorithm}
\begin{algorithm}[H]
\caption{Projected Armijo rule}
\label{alg:armijo}\normalsize
Input: current $w\in L^2(V;I)$, parameters $\beta,\gamma \in (0,1)$ and $\eta_0>0$\\
Output: step size $\eta > 0$
\begin{algorithmic}[1]
\STATE{set $\eta := \eta_0$}
\WHILE{\\$J(R_V \mathcal{P}(w - \eta J'(R_Vw)))- J(R_Vw) > -\frac{\gamma}{\eta} \|w-\mathcal{P}(w - \eta J'(R_Vw))\|^2_{L^2(V;I)}$}
\STATE{set $\eta := \beta \eta$}
\ENDWHILE
\end{algorithmic}
\end{algorithm}

\begin{figure}[H]
\begin{center}
\includegraphics[trim= .7cm 0cm 5.3cm 0cm,clip,width=.992\textwidth]{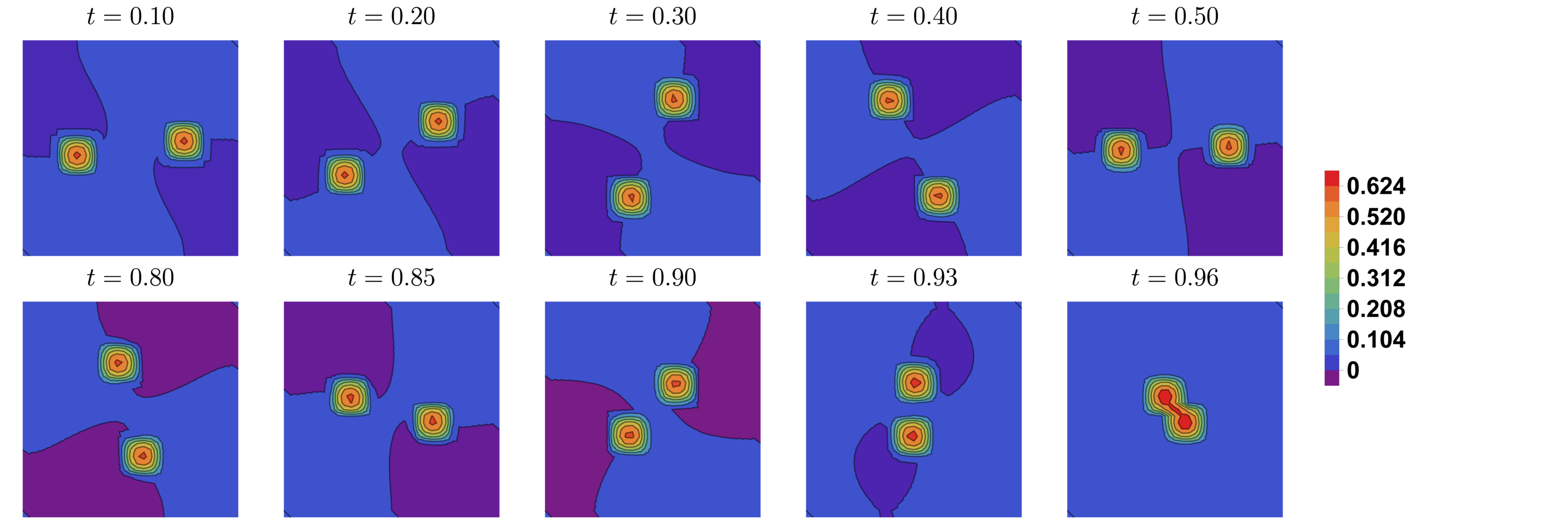}
\caption{The inverse Riesz transform $R_V^{-1}z^*$ of the reconstructed optimal control $z^*$ using the entropic risk measure for several values of $t$ in the constrained setting.}\label{fig:oc}
\end{center}
\end{figure}

\begin{figure}[H]
\begin{center}
\includegraphics[trim= 0cm 3cm 0cm 3cm,clip,height=.255\textwidth]{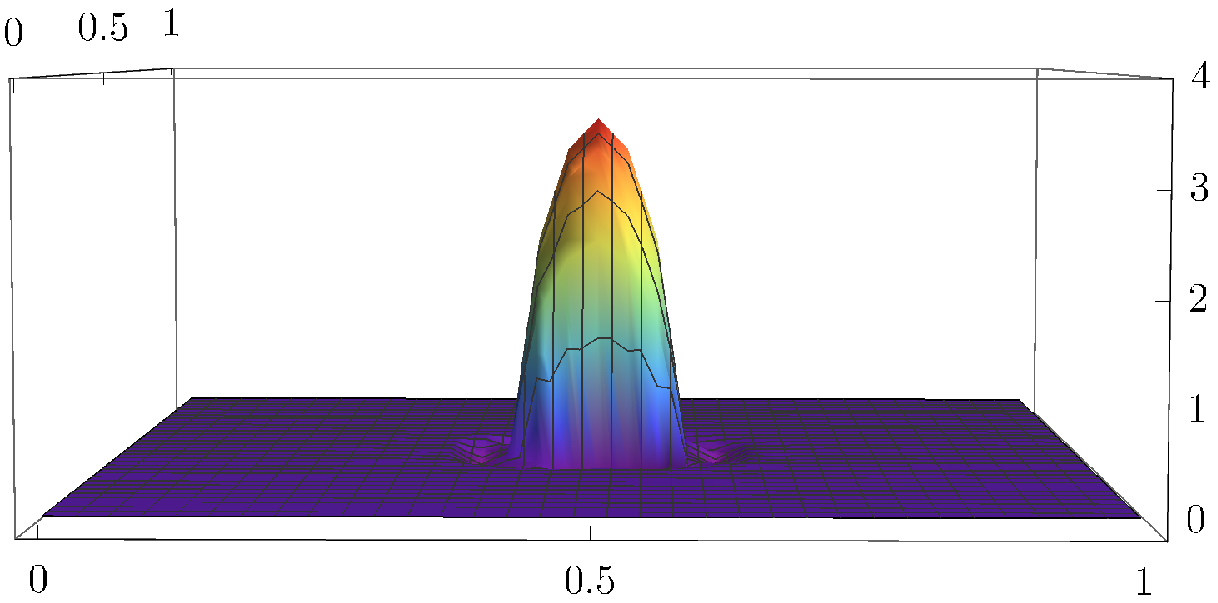}\includegraphics[trim= 0cm 3cm 0cm 3cm,clip,height=.255\textwidth]{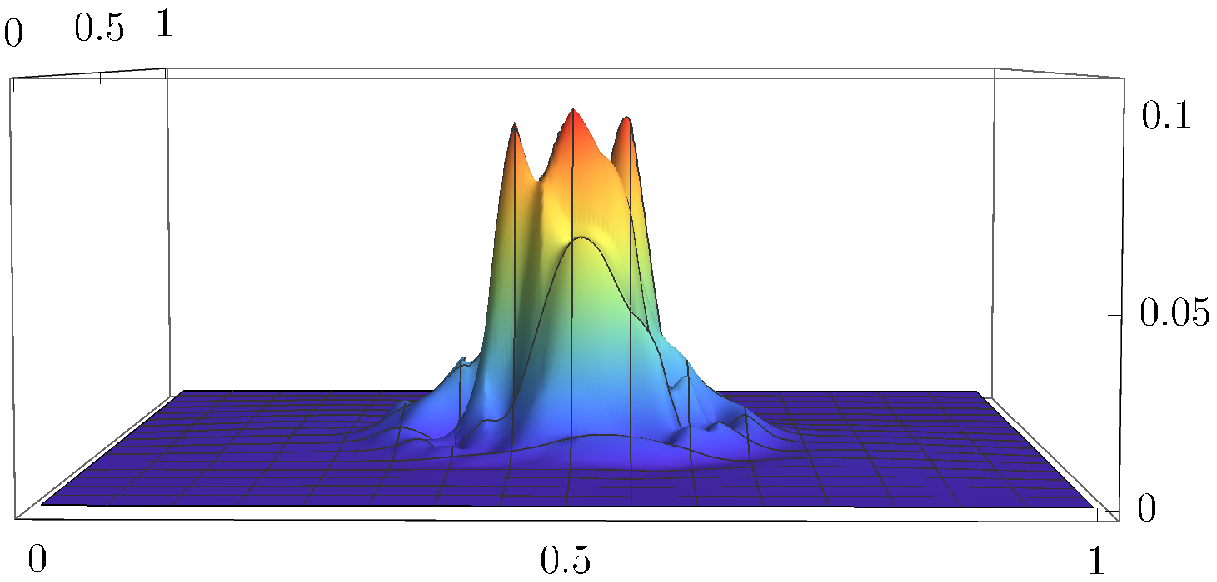}
\caption{Left: the inverse Riesz transform of the control at time $t=1$ in the constrained setting after 25 iterations of the projected gradient descent algorithm using the entropic risk measure. Right: The difference between the reconstruction obtained in the constrained setting and the corresponding solution in the unconstrained setting.}\label{fig:oc2}
\end{center}
\end{figure}

\begin{figure}[H]
\begin{center}
\includegraphics[width=.93\textwidth]{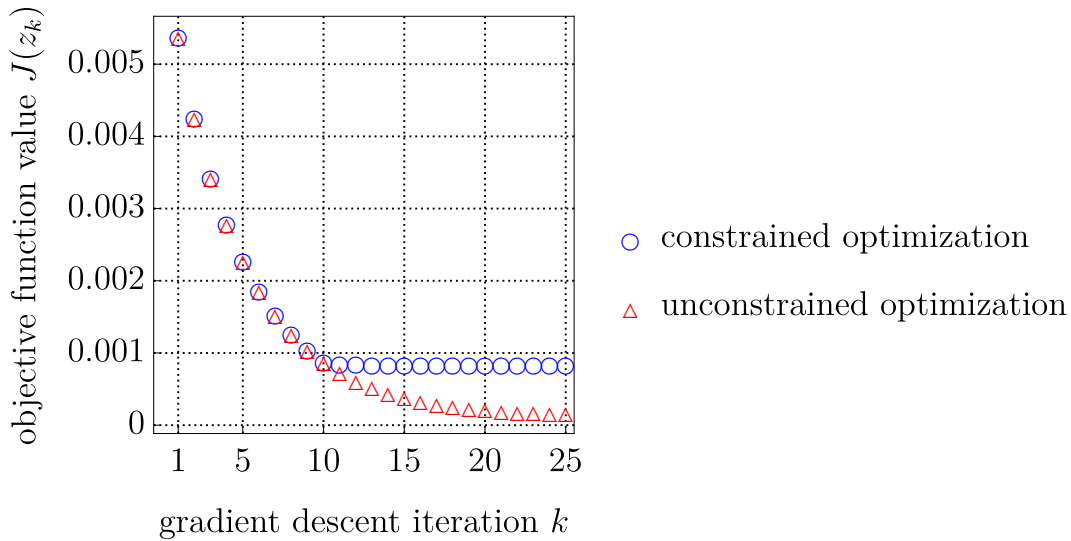}
\caption{The value of the objective functional for each gradient descent iteration. The results corresponding to the constrained setting and the unconstrained setting are plotted in blue and red, respectively.}\label{fig:oc3}
\end{center}
\end{figure}

\section{Conclusion} \label{sec:conclusion}

We developed a specially designed QMC method for an optimal control problem subject to a parabolic PDE with an uncertain diffusion coefficient.  To account for the uncertainty, we considered as measures of risk the expected value and the more conservative (nonlinear) entropic risk measure.  For the high-dimensional integrals originating from the risk measures, we provide error bounds and convergence rates in terms of dimension truncation and the QMC approximation. In particular, after dimension truncation, the QMC error bounds do not depend on the number of uncertain variables, while leading to faster convergence rates compared to Monte Carlo methods.  In addition we extended QMC error bounds in the literature to separable Banach spaces,  and hence the presented error analysis is discretization invariant.

\section*{Acknowledgements}
P.~A. Guth is grateful to the DFG RTG1953 ``Statistical Modeling of Complex Systems and Processes'' for funding of this research. F.~Y. Kuo and I.~H. Sloan acknowledge the support from the Australian Research Council (DP210100831).

\bibliographystyle{plain}
\bibliography{OCuU}

\end{document}